\newtheorem {theorem}{Theorem}
\newtheorem*{theorem*}{Theorem}
\newtheorem*{theoremA*}{Theorem A}
\newtheorem*{theoremB*}{Theorem B}
\numberwithin{theorem}{section}
\numberwithin{equation}{section}
\newtheorem {corollary}[theorem]{Corollary}
\newtheorem*{corollary*}{Corollary}
\newtheorem {lemma}[theorem]{Lemma}
\newtheorem {proposition}[theorem]{Proposition}
\newtheorem {conjecture}[theorem]{Conjecture}
\theoremstyle{definition}
\newtheorem {remark}[theorem]{Remark}
\newtheorem {definition}[theorem]{Definition}
\newtheorem*{definition*}{Definition}
\newcommand{\vm}[1] { {\color{OliveGreen} \textsf{[[VM: #1]]} } }
\newcommand{\dl}[1] { {\color{blue}       \textsf{[[DL: #1]]} } }
\newcommand{\gf}[1] { {\color{purple}     \textsf{[[GF: #1]]} } }
\newcommand{\Spec}     {\operatorname{Spec}}
\newcommand{\Funct}    {\operatorname{Funct}}
\newcommand{\Ind}      {\operatorname{Ind}}
\newcommand{\Res}      {\operatorname{Res}}
\newcommand{\Hom}      {\operatorname{Hom}}
\newcommand{\End}      {\operatorname{End}}
\newcommand{\Op}       {\operatorname{Op}}
\newcommand{\het}      {\operatorname{ht}}
\newcommand{\QCoh}      {\operatorname{QCoh}}
\newcommand{\Id}            {{\mathrm{Id}}}
\newcommand{\statovuotoF}   {|0\rangle_\Fock}
\newcommand{\statovuotoFd}  {|0\rangle_{\Fock_2}}
\newcommand{\statovuotoV}   {|0\rangle_\mV}
\newcommand{\statovuoto}    {|0\rangle}
\newcommand{\semiinfz}      {\Psi^0}
\newcommand{\semiinf}       {\Psi}
\newcommand{\hUCl}[1]   {\hU_{#1} \hat \otimes \Cl_{#1}}
\newcommand{\hUClt}   {\hU_{t} \hat \otimes_Q \Cl_{t}}
\newcommand{\hUCls}   {\hU_{s} \hat \otimes_Q \Cl_{s}}
\newcommand{\hUClu}   {\hU_{1} \hat \otimes \Cl_{1}}
\newcommand{\hUCld}   {\hU_{2} \hat \otimes_A \Cl_{2}}
\newcommand{\hUClts}   {\hU_{t,s} \hat \otimes_Q \Cl_{t,s}}
\newcommand{\hU}        {{\hat U}}
\newcommand{\hg}        {{\hat \gog}}
\newcommand{\uno}       {\mathbbm{1}}
\newcommand{\Weyl}      {\mV^{\grl,\mu}_2}
\newcommand{\tWeyl}     {\widetilde \mV^{\grl,\mu}_2}
\newcommand{\Sp}       {\operatorname{Sp}}
\newcommand{\hgog}   {{\hat\gog}}
\newcommand{\limpro} {\varprojlim}
\newcommand{\Cl}{{\calC\ell}}
\newcommand{\Fock}{{\Lambda}}
\newcommand{\field}{{\calF}}
\newcommand{\bigrado}{\operatorname{bideg}}
\newcommand{\cha}{\operatorname{ch}}
\newcommand{\pr}{\operatorname{pr}}
\newcommand{\std}{\operatorname{std}}
 \newcommand{\mC}{\mathbb C} 
 \newcommand{\mF}{\mathbb F}
  \newcommand{\mV}{\mathbb V}
\newcommand{\mW}{\mathbb W}  
\newcommand{\mZ}{\mathbb Z}
 \newcommand{\calC}{\mathcal C} 
 \newcommand{\calF}{\mathcal F} \newcommand{\calG}{\mathcal G}
\newcommand{\calH}{\mathcal H}  
\newcommand{\calK}{\mathcal K}  
 \newcommand{\calR}{\mathcal R} 
  \newcommand{\calV}{\mathcal V}
\newcommand{\goa}  {\mathfrak a}
\newcommand{\gob}{\mathfrak b}  
  \newcommand{\gog}{\mathfrak g}
 \newcommand{\gol}{\mathfrak l} 
\newcommand{\gon}{\mathfrak n}  
  \newcommand{\gos}{\mathfrak s}
\newcommand{\got}{\mathfrak t}
\newcommand{\gra}{\alpha} \newcommand{\grb}{\beta}       \newcommand{\grg}{\gamma}
 \newcommand{\grl}{\lambda}     
\newcommand{\grf}{\varphi}      
\newcommand{\grG}{\Gamma}
\newcommand{\ra}       {\rightarrow}
\newcommand{\lra}      {\longrightarrow}
\renewcommand{\geq}    {\geqslant}%vuole amssymb
\renewcommand{\leq}    {\leqslant}%vuole amssymb
\newcommand{\senza}    {\smallsetminus}
\newcommand{\DX}{\mathcal{D}_{X}}
\newcommand{\gc}{\widehat{\mathfrak{g}}_{\operatorname{crit}}}
\newcommand{\gcm}{\gc\text{-mod}}
\newcommand{\Aff}{^f\text{Sch}_{\operatorname{aff}/k}}
\newcommand{\D}{\mathcal{D}}
\newcommand{\A}{\mathcal{A}}
\newcommand{\CC}{\mathcal{C}}
\newcommand{\QC}{\operatorname{QCoh}}
\newcommand{\Y}{\mathcal{Y}}
\newcommand{\ZZ}{\mathfrak{Z}_{\operatorname{crit}}}
\newcommand{\CACCA}{\text{Op}^{\circ}_{\check{\gog},X}}
\newcommand{\ACM}{\A_{\operatorname{crit}}\operatorname{-mod}}
\newcommand{\OPDIc}{\text{Op}^{\circ}_{\check{\gog},I}}
\newcommand{\OPDUGc}{\mathrm{Op}_{\check{\gog}}^{\mathrm{unr}}}
\newcommand{\OPDUXc}{\text{Op}_{\check{\gog}, X}^{\operatorname{unr}}}
\newcommand{\OPPDI}{\text{Op}^{\circ}_{\gog,I}}
\begin{document}

\author{Giorgia Fortuna, Davide Lombardo, \\ Andrea Maffei, Valerio Melani}
\renewcommand{\shortauthors}{Fortuna, Lombardo, Maffei, Melani}

\title[Semi-infinite cohomology of Weyl modules]{The semi-infinite cohomology of Weyl modules with two singular points}

\maketitle

\begin{center}
	{\it A Claudio Procesi, con ammirazione.}
\end{center}

\bigskip

\begin{flushright}``a volte due punti sono pi\`u vicini di quanto non sembri,\\ ma per unirli ci vuole un'idea. Lui era la persona giusta.''\\ Nonmaterial lifeform, di F. B. Amadou, Urania. 
\end{flushright}

\begin{abstract}
In their study of spherical representations of an affine Lie algebra at the critical level and of unramified opers, Frenkel and Gaitsgory introduced what they called the \textit{Weyl module} $\mV^\grl$ corresponding to a dominant weight $\grl$. This object plays an important role in the theory. In \cite{FLMM}, we introduced a possible analogue $\mV_2^{\grl,\mu}$ of the Weyl module in the setting of opers with two singular points, and in the case of $\gos\gol(2)$ we proved that it has the `correct' endomorphism ring. In this paper, we compute the semi-infinite cohomology of $\mV_2^{\grl,\mu}$ and we show that it does not share some of the properties of the semi-infinite cohomology of the Weyl module of Frenkel and Gaitsgory. For this reason, we introduce a new module $\tilde \mV_2^{\grl,\mu}$ which, in the case of $\gos\gol(2)$, enjoys all the expected properties of a Weyl module.
\end{abstract}

\section{Introduction}

Let $\gog$ be a complex simple Lie algebra and let $\hgog$ be its affinization. 
Choose a Borel subalgebra and a maximal toral subalgebra, and let $G$ be a simply connected algebraic group with Lie algebra equal to $\gog$. As a particular case of a more general conjecture, Frenkel and Gaitsgory proved in \cite{FG7} that the semi-infinite cohomology gives an isomorphism between the category $\hgog_{crit}\text{-}\mathrm{mod}^{JG}$ of spherical representations of $\hgog$ at the critical level (that is, representations of $\hgog$ at the critical level with a compatible action of $JG=G(\mC[[t]])$) and the category of quasi-coherent sheaves on the space of unramified opers $\Op_1^{\mathrm{unr}}$ over $\gog^L$, the Langlands dual of $\gog$. As they explain, the space of unramified opers is the disjoint union of its connected components $\Op_1^{\grl,\mathrm{unr}}$, and the category of spherical representations is the product of certain subcategories $\hgog_{crit}\text{-}\mathrm{mod}^{JG,\grl}$, where in both cases $\grl$ ranges over all dominant weights of $G$. The equivalence given by semi-infinite cohomology specialises to an equivalence between $\hgog_{crit}\text{-}\mathrm{mod}^{JG,\grl}$ and the category of quasi-coherent sheaves over $\Op_1^{\grl,\mathrm{unr}}$. The space $\Op_1^{\grl,\mathrm{unr}}$ is a non-reduced indscheme, and its reduced version, denoted by $\Op_1^\grl$, is an affine scheme. In this paper we will denote by $Z_1^\grl$ its coordinate ring.

In this theory, an important role is played by the Weyl module $\mV_1^\grl$. This module enjoys the following fundamental properties:
$$ \End_{\hgog}(\mV_1^\grl)\simeq Z_1^\grl \quad\text{ and } \quad \semiinfz(\mV_1^\grl)\simeq Z_1^\grl, $$ 
where $\semiinf^n$ is the $n$-th semi-infinite cohomology group. Moreover the semi-infinite cohomology groups  $\semiinf^n(\mV_1^\grl)$ are trivial for $n\neq 0$. 

Dennis Gaitsgory suggested to Giorgia Fortuna to study the space of unramified opers and spherical representations in a more general context, see \cite{GiorgiaPhD}; in fact, the definition of unramified opers as well as the definition of spherical representations can be generalized in the presence of more than one singularity, raising the question on whether or not certain statements remain true and what happens when these singularities collide. 

In \cite{FLMM} we took some steps in this direction, by studying the case of $\gos\gol(2)$. In particular, we introduced a version of the Weyl module $\mV^{\grl,\mu}_2$ of critical level of the affine Lie algebra with two singularities $\hgog_2$. Thinking of $t$ as a coordinate near the first singularity and $s$ as a coordinate near the second singularity, this is the version of the affine Lie algebra over the ring $A=\mC[[a]]$, where $a = (t-s)$. As an $A$ module is equal to $K_2\otimes_\mC\oplus AC_2$
where $K_2=\mC[[a,t]][1/t(t-a)]$ and $C_2$ is a central element (see \cite{FLMM}, Section 3.3 for the complete definition). 

\noindent We also introduced reduced scheme over $A$ of unramified opers $\Op^{\grl,\mu}_2$ which generalize the schemes $\Op^{\grl}_1$. Both objects depend on two integral dominant weights $\grl$, $\mu$ of $G$, and we proved that 
$$ \End_{\hgog_2}(\mV_2^{\grl,\mu})\simeq Z_2^{\grl,\mu}, $$
where $Z_2^{\grl,\mu}$ is the coordinate ring of $\Op_2^{\grl,\mu}$. 

\begin{comment}
With the idea of proving an equivalence between quasi-coherent sheaves over the space of unramified opers with two singularities and the category of spherical representations over $\hgog_2$, we study the semi-infinite cohomology of $\Weyl$ in order to understand what relation it has with the ring $Z_2^{\grl,\mu}$. \
\end{comment}

In this article we study the semi-infinite cohomology of $\Weyl$ and its relation with the ring $Z_2^{\grl,\mu}$ in order to understand how the equivalence $\semiinfz(\mV_1^\grl)\simeq Z_1^\grl$ generalizes.

\noindent This is done in Section \ref{sec:calcolocomologia}, where we compute the cohomology of $\Weyl$; in Section \ref{sec:azionecentro} we study the action of $Z_2$, the center of a completion $\hU_2$ of the enveloping algebra of $\hgog_2$ at the critical level on this module (see Section \ref{sez:defU}).  \\
In particular, we prove that the specialisation at $a=0$ and the localization at $a\neq 0$ of the semi-infinite cohomology of $\Weyl$ are isomorphic to the specialisation and localization of $Z_2^{\grl,\mu}$, respectively. However, in contrast to our intuition, we also show the following result which says that $\semiinf^0(\Weyl)$ doesn't exactly generalize the equivalence $\semiinfz(\mV_1^\grl)\simeq Z_1^\grl$ as expected:

\begin{theoremA*}[Theorem \ref{teo:comologiaWeyl2} and Proposition \ref{prop:noniso}]
	We have $\semiinf^n(\Weyl)=0$ for $n\neq 0$. Moreover, $\semiinfz(\Weyl)$
	is not isomorphic to $Z^{\grl,\mu}_2$ as a $Z_2$-module. 
\end{theoremA*}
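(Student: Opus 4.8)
We prove the two assertions separately. For the vanishing $\semiinf^n(\Weyl)=0$ with $n\neq0$, we compute $\semiinf(\Weyl)$ from the standard BRST-type complex $\Weyl\,\hat\otimes_A\Fock_2$, whose differential is built out of the $\hgog_2$-action and the generators of the Clifford algebra $\Cl_2$, so that the natural ambient object is a module over $\hU_2\hat\otimes_A\Cl_2$ (see Section \ref{sez:defU}). Since $A=\mC[[a]]$ is a discrete valuation ring and all the terms in play are graded with graded pieces that are finitely generated and bounded below over $A$, a graded Nakayama argument reduces the vanishing to the two base changes $A\to\mC((a))$ (localisation at $a\neq0$) and $A\to\mC$ (specialisation at $a=0$). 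Over $\mC((a))$ the two singular points $t=0$ and $t=a$ become distinct, the algebra $\hgog_2\otimes_A\mC((a))$ decomposes into two commuting copies of the one-point affine algebra, and $\Weyl\otimes_A\mC((a))$ is identified with the external tensor product of the base-changed Weyl modules $\mV_1^\grl$ and $\mV_1^\mu$; a Künneth formula for semi-infinite cohomology together with the classical identity $\semiinf^n(\mV_1^\grl)=0$ for $n\neq0$ then gives the generic vanishing.

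Over $\mC$ the two points collide, and here we must use the explicit description of the specialised module $\Weyl|_{a=0}$ from \cite{FLMM}: this is not itself a one-point Weyl module, but it carries a finite filtration whose subquotients are assembled from the modules $\mV_1^\grl$, $\mV_1^\mu$ and their first-order deformations, so the resulting spectral sequence, again fed by the one-point vanishing, forces $\semiinf^n(\Weyl|_{a=0})=0$ for $n\neq0$. The main obstacle in this part is the compatibility of semi-infinite cohomology with the specialisation at $a=0$: since the complex involves completed tensor products over $A$, one has to verify that $\semiinf^n(\Weyl)\otimes_A\mC$ genuinely computes $\semiinf^n(\Weyl|_{a=0})$, i.e.\ that no spurious $\mathrm{Tor}$ terms intervene --- equivalently, that the complex computing $\semiinf(\Weyl)$ consists, in each internal degree, of flat $A$-modules. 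We expect this flatness bookkeeping, together with the analysis of $\Weyl|_{a=0}$, to be the technical heart of Section \ref{sec:calcolocomologia}, the generic computation being essentially formal.

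For the non-isomorphism we start from the explicit model of $\semiinfz(\Weyl)$ obtained above, regarded as a module over the centre $Z_2$ of $\hU_2$ via the action analysed in Section \ref{sec:azionecentro}. The two facts already recorded --- that the localisation at $a\neq0$ and the specialisation at $a=0$ of $\semiinfz(\Weyl)$ coincide with those of $Z_2^{\grl,\mu}$ --- show that $\semiinfz(\Weyl)$ and $Z_2^{\grl,\mu}$ have the same generic and special fibres; being graded with torsion-free, hence (over the DVR $A$) free, graded pieces of matching ranks, they are in fact already isomorphic as graded $A$-modules. Hence the whole discrepancy must lie in the $Z_2$-action, and the plan is to single out a discrete invariant of $Z_2$-modules that distinguishes them. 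The natural candidate is the scheme-theoretic support, i.e.\ the annihilator ideal $\operatorname{Ann}_{Z_2}(-)\subseteq Z_2$: for $Z_2^{\grl,\mu}$ this is exactly the kernel of the surjection $Z_2\twoheadrightarrow Z_2^{\grl,\mu}$ cutting out $\Op_2^{\grl,\mu}$, whereas the explicit description of $\semiinfz(\Weyl)$ should reveal that the centre acts on it through a strictly larger quotient, so that its support in $\Spec Z_2$ is genuinely bigger (or carries extra nilpotents); equivalently, one can check that $Z_2^{\grl,\mu}$ is a cyclic $Z_2$-module while $\semiinfz(\Weyl)$ is not. The delicate point here is purely computational --- making the $Z_2$-action on the explicit model transparent enough, which is precisely the role of Section \ref{sec:azionecentro} --- and once this annihilator is identified and seen not to collapse onto that of $Z_2^{\grl,\mu}$, the non-isomorphism follows.
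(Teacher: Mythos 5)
There are genuine gaps in both halves of your argument. For the vanishing, localisation and specialisation of the complex $C_2^\bullet=\Weyl\otimes_A\Fock_2^\bullet$, fed by the one-point vanishing, only yield $\semiinf^n(\Weyl)=0$ for $n\neq 0,1$: the long exact sequence in cohomology leaves open the possibility that $\semiinf^1(\Weyl)$ is a nonzero divisible $a$-torsion module (a sum of copies of $\mC((a))/\mC[[a]]$), which is invisible to both base changes. Your ``graded Nakayama'' is meant to exclude this, but you never exhibit a grading that is preserved by $d^{(2)}$ and has finitely generated homogeneous pieces, and the obvious candidates fail: the charge grading has infinitely generated pieces; the conformal grading is not preserved by the $\chi^{(2)}$-summand of the differential; and the principal-type combination that \emph{is} preserved still has infinite-rank components (for instance all the vectors $(fw_{-1/2})^k(v_\grl\otimes v_\mu)\otimes\statovuotoFd$ share the same charge and the same such degree). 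The paper avoids the issue entirely by the Frenkel--Ben Zvi factorisation $C_2^\bullet\simeq E_2^\bullet\otimes_A D_2^\bullet$ into a subcomplex concentrated in non-positive charges and one concentrated in non-negative charges (the latter analysed as a double complex); on each factor boundedness removes the problematic degree and the localisation/specialisation argument closes via Lemma \ref{lem:isoMN}. A further slip: the special fibre is $\mW_1^{\grl,\mu}\simeq\bigoplus_\nu\mV_1^\nu$, the sum being over the irreducible constituents of $V^\grl\otimes V^\mu$; it is not assembled from $\mV_1^\grl$, $\mV_1^\mu$ and ``first-order deformations''.

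For the non-isomorphism, your primary invariant provably cannot distinguish the two modules. The action of $Z_2$ on $\Weyl$ already factors through $\End_{\hg_2}(\Weyl)\simeq Z_2^{\grl,\mu}$, and the localisation $\semiinfz(\Weyl)[a^{-1}]\simeq \semiinfz(\mV^\grl_t)\otimes_Q\semiinfz(\mV^\mu_s)$ is a \emph{faithful} $Z_2^{\grl,\mu}[a^{-1}]$-module; since $Z_2^{\grl,\mu}$ has no $a$-torsion, the annihilators of $\semiinfz(\Weyl)$ and of $Z_2^{\grl,\mu}$ in $Z_2$ coincide exactly --- there is no larger support and there are no extra nilpotents. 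Your parenthetical reformulation via cyclicity is the right statement, but it is not a consequence of the support comparison and requires the rigidity argument the paper actually uses: any would-be $Z_2$-generator of $\semiinfz(\Weyl)$ becomes, after inverting $a$, a unit of $Z_t^\grl\otimes_Q Z_s^\mu$ times $[v_\grl]\otimes[v_\mu]$, and the only units of that polynomial ring over $Q$ are scalars (Lemma \ref{lem:unicitaisosucampo}); by torsion-freeness and the indivisibility of $[v_\grl\otimes v_\mu]$ (Corollary \ref{cor:indivisibilitavv}) the generator is $q[v_\grl\otimes v_\mu]$ with $q\in A$. Reducing mod $a$, this class lies in the single summand $\semiinfz(\mV_1^{\grl+\mu})$ of $\bigoplus_\nu\semiinfz(\mV_1^\nu)$ and, by $Z_1$-equivariance, generates at most that summand; hence no generator exists once $V^\grl\otimes V^\mu$ is reducible. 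Without this step your argument for the second claim does not get off the ground.
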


For this computation, we rely on the formalism introduced by Casarin in \cite{Casarin1}, which makes it possible to use vertex algebras also in the context of opers with two singularities. Once this formalism is in place, for the computation of the semi-infinite cohomology we can follow closely the approach taken by Frenkel and Ben Zvi in \cite[Chapter 15]{FB} for the case of one singularity.  

In the last section, we restrict our attention to the Lie algebra $\gos\gol(2)$ and introduce a submodule 
$\tWeyl$ of $\Weyl$, which is generated by the highest weight vector. We prove that this module is the correct one to consider, in the sense that it has the expected cohomology groups and endomorphism ring, as the following result shows.

\begin{theoremB*}[Proposition \ref{prp:comologiatWeyl1}, Theorem \ref{teo:comologiatWeyl2} and Proposition \ref{prp:endomorphismVtilde}] If $\gog=\gos\gol(2)$ then we have $\semiinf^n(\tWeyl)=0$ for $n\neq 0$. Moreover, we have 
	$$ \End_{\hgog_2}(\tWeyl)\simeq Z_2^{\grl,\mu} \quad\text{ and } \quad \semiinfz(\tWeyl)\simeq Z_2^{\grl,\mu}. $$ 
\end{theoremB*}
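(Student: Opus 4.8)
The plan is to prove the three assertions together, the unifying input being a concrete description of $\tWeyl$ as a module over $A=\mC[[a]]$ together with a comparison, fibrewise in $a$, with the one-singularity picture recalled in the introduction.

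First I would pin down the structure of $\tWeyl$. Being by definition the cyclic $\hgog_2$-submodule generated by the highest weight vector $v$, it can be written as $\tWeyl\cong U(\hgog_2)/J$, where $J=\mathrm{Ann}(v)$ is the left ideal generated by the defining relations of $v$: the $\got$-weight $(\grl,\mu)$, annihilation by the relevant ``non-negative'' part of $\hgog_2$ in the sense of \cite{Casarin1}, and the $\gos\gol(2)$-integrability relations attached to $\grl$ and $\mu$. Using a PBW basis adapted to the splitting of $K_2$ into ``negative'' and ``non-negative'' parts, I would show that $\tWeyl$ is free over $A$ with an explicit monomial basis, and compute both its localisation at $a\neq0$ (which should be a suitable avatar of $\mV_1^\grl\otimes\mV_1^\mu$) and its specialisation at $a=0$ (the cyclic submodule generated by the image of $v$ inside $\Weyl|_{a=0}$); both fibres are objects whose semi-infinite cohomology and endomorphisms are governed by the one-point theory.

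For the cohomology, with Casarin's formalism the semi-infinite cohomology of a $\hgog_2$-module is computed by a BRST-type complex of the form $M\,\hat\otimes\,\Fock$ with the standard differential, and I would follow \cite[Ch.~15]{FB} essentially verbatim. Because $\tWeyl$ carries, in the sense of the previous paragraph, the structure of a module induced from the ``negative'' part, that argument yields that its cohomology is concentrated in degree $0$ and equals an explicit coinvariants space; the residual part of the differential then matches this space with the functions on $\Op_2^{\grl,\mu}$. To make the identification precise I would localise at $a\neq0$ and specialise at $a=0$, invoke $\semiinfz(\mV_1^\grl)\simeq Z_1^\grl$ together with its counterpart at the collided point, and glue: since $\semiinf^\bullet$ commutes with the relevant base changes, $\semiinf^0(\tWeyl)$ and $Z_2^{\grl,\mu}$ are flat $A$-modules with isomorphic fibres, so the comparison map between them induced by the action of the centre $Z_2$ must be an isomorphism. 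An alternative is to use the exact sequence $0\to\tWeyl\to\Weyl\to\Weyl/\tWeyl\to0$: by Theorem~A the cohomology of $\Weyl$ lies in degree $0$, so the long exact sequence reduces the whole problem to computing $\semiinf^\bullet(\Weyl/\tWeyl)$ and the connecting maps, the kernel of $\semiinf^0(\Weyl)\to\semiinf^0(\Weyl/\tWeyl)$ being the sought-for $Z_2^{\grl,\mu}$. For the endomorphism ring, cyclicity identifies $\End_{\hgog_2}(\tWeyl)$ with the space of vectors of $\tWeyl$ satisfying the defining relations of $v$ (the highest weight vectors of weight $(\grl,\mu)$), with its natural $Z_2$-action; one then either computes this space directly from the monomial basis above, using $\End_{\hgog_2}(\mV_1^\grl)\simeq Z_1^\grl$ fibrewise and the same flatness-and-fibres argument, or checks that every such highest weight vector of $\Weyl$ already lies in $\tWeyl$ and that $\Weyl/\tWeyl$ carries none, whence $\End_{\hgog_2}(\tWeyl)\cong\End_{\hgog_2}(\Weyl)\simeq Z_2^{\grl,\mu}$ by \cite{FLMM}.

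The hard part will be the structure of $\tWeyl$ together with the consequent need to show that $\semiinf^0(\tWeyl)$ is \emph{exactly} $Z_2^{\grl,\mu}$. By Theorem~A (Proposition~\ref{prop:noniso}) the module $\Weyl$ overshoots, in that $\semiinf^0(\Weyl)$ is strictly larger than $Z_2^{\grl,\mu}$, so one must prove that passing to the cyclic submodule removes precisely the surplus: over-collapsing the special fibre at $a=0$ would make $\semiinf^0(\tWeyl)$ too small, while under-collapsing would leave part of the surplus behind. Establishing that $\tWeyl$ is $A$-flat with exactly the expected special fibre, namely the ``naive'' collided Weyl module, is thus the technical heart, and it is here that the hypothesis $\gog=\gos\gol(2)$ is indispensable: it is the only case in which the structure of $\tWeyl$ at $a=0$, and hence the reduction to the one-point computations, is under control.
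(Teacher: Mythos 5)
Your overall architecture (explicit $A$-module structure of $\tWeyl$, then localisation/specialisation and Lemma \ref{lem:isoMN}) matches the paper's, but there is a genuine gap at the decisive step, and you have located the difficulty in the wrong place. The assertion ``$\semiinf^0(\tWeyl)$ and $Z_2^{\grl,\mu}$ are flat $A$-modules with isomorphic fibres, so the comparison map between them induced by the action of the centre $Z_2$ must be an isomorphism'' is false as a principle: it is exactly what fails for $\Weyl$. Indeed $\semiinfz(\Weyl)/a\semiinfz(\Weyl)\simeq\bigoplus_\nu\semiinfz(\mV_1^\nu)$ and $Z_2^{\grl,\mu}/aZ_2^{\grl,\mu}\simeq\prod_i Z_1^{\grl+\mu-2i}$ \emph{are} abstractly isomorphic $Z_1$-modules and both sides are $A$-flat, yet Proposition \ref{prop:noniso} shows the comparison map is not an isomorphism, because its specialisation sends $1$ to the class of $v_\grl\otimes v_\mu$, which only generates the summand indexed by $\nu=\grl+\mu$. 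Lemma \ref{lem:isoMN} requires the specialised map to be injective (equivalently here, by Remark \ref{rmk:Opnudisgiunti}, surjective), and that is precisely what must be proven for $\tWeyl$. Relatedly, the ``technical heart'' is not that $\tWeyl$ is $A$-flat with the expected special fibre: its special fibre is isomorphic to $\mW_1^{\grl,\mu}$, i.e.\ the \emph{same} as that of $\Weyl$ (Lemma \ref{lem:SptWeyl}). What changes is that for $\tWeyl$ the image of $v_\grl\otimes v_\mu$ generates the special fibre as a $\hU_1$-module, and even this does not formally imply that its cohomology class generates $\semiinfz(\tWeyl)/a\semiinfz(\tWeyl)$ over $Z_1$.

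The missing idea, which is the actual content of the paper's proof of Theorem \ref{teo:comologiatWeyl2}, is an explicit computation with the 2-Sugawara operator $S^{(2)}_{1/2}\in Z_2$: one shows (Lemma \ref{lem:calcoloSugawara}) that $S^{(2)}_{1/2}\hat w_\ell\equiv\hat w_{\ell+1}+K\hat w_\ell\bmod a\tWeyl$, where $\hat w_\ell=(et^{-1})^\ell\tilde w_\ell$, so that iterating the action of $Z_2$ on $[v_\grl\otimes v_\mu]$ reaches, modulo $a$, a generator of each summand $\semiinfz(\mV_1^{\grl+\mu-2\ell})$; one then needs the coboundary identity $d^{(1)}\bigl((et^{-1})^{h-1}v_\nu\otimes(\psi t^{-1})\statovuoto\bigr)=(et^{-1})^{h-1}v_\nu\otimes\statovuoto+(et^{-1})^{h}v_\nu\otimes\statovuoto$ to replace $[\hat w_\ell]$ by $[\tilde w_\ell]$. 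Without an argument of this kind your proof of $\semiinfz(\tWeyl)\simeq Z_2^{\grl,\mu}$ does not go through, and your alternative route via $0\to\tWeyl\to\Weyl\to\Weyl/\tWeyl\to0$ merely relocates the same problem to the connecting maps. A secondary caveat: your starting point $\tWeyl\cong U(\hgog_2)/J$ with $J$ ``generated by the defining relations'' is itself an unproven claim about the annihilator of the highest weight vector; the paper avoids it by computing $\tWeyl$ directly inside $\Weyl$ (Lemma \ref{lem:descrizionetWeyl}) and by filtering it with $\mF_i=\tWeyl\cap a^i\Weyl$. The vanishing of $\semiinf^n$ for $n\neq0$ and the endomorphism-ring computation (where $\gra(1)=\Id$ together with Remark \ref{rmk:Opnudisgiunti}(3) does make the fibrewise argument work) are essentially fine as you sketch them.
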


We now briefly explain the connection between these results and Conjecture 3.6.1 in Fortuna's Thesis \cite{GiorgiaPhD}. As a particular case the conjecture predicts an equivalence between quasi-coherent sheaves over the space of unramified opers with two singularities and the category of spherical representations over $\hgog_2$: that is the space of smooth representations of $\hgog_2$ with a compatible action of $J_2G=G(\mC[[a,t]])$. 
	
The conjecture stated in \cite{GiorgiaPhD} predicts an equivalence of similar categories not only in the presence of two singularities but in the presence of $n$-possible singularities. In particular for any finite set with $n$ elements $I$ we can define the space of opers on the formal disc with $n$-singularities $\Op_I$ and the subspace of unramified opers $\Op^{\mathrm{unr}}_I$ (see Section 3.5 in \cite{GiorgiaPhD}). These are spaces over the product of $n$-copies of the formal disc. These are easily seen to be factorization spaces, which means that this spaces specialise nicely when restricted along or outside the diagonals of this product (see Section 3.1.5 in \cite{GiorgiaPhD}). There are not substantial differences between the treatment we do here or in \cite{FLMM} of $\Op_2$ and the general case. The only minor difference is that we fix a singularity to be $0$. These spaces are indschemes, and so we can define the categories $\QCoh(\Op_I)$, and $\QCoh(\Op^{\mathrm{unr}}_I)$ of quasi-coherent sheaves on $\Op_I$ and $\Op_I^{ur}$ (see Section 3.5.3 in \cite{GiorgiaPhD} for the actual definition), and the nice factorization properties which make them factorization categories (see Section 3.1.2 in \cite{GiorgiaPhD}).  

Similarly, for a finite set $I$ we can define a Lie algebra $\hgog_I$ and study its smooth representations at the critical level. The objects constructed in this way live also on the product of $n$ copies of the formal disc, and they also have nice factorization properties, in particular the collection of (completions of the) enveloping algebras specialized at the critical level $\hU_I$ of the algebras $\hgog_I$, is what is called a factorization algebra (see Section 3.1.3 in \cite{GiorgiaPhD}). As a conseguence the collection of the categories of smooth representations at the critical of the Lie algebras $\hgog_I$, denoted by $\hgog_{I,crit}\text{-}\mathrm{mod}$ and their subcategories of spherical representations $\hgog_{I,crit}\text{-}\mathrm{mod}^{JG}$ can be organized also in a factorization category. The semi-infinite cohomology can be defined also in this generality and defines a functor $$\Psi_I:\hgog_{I,crit}\text{-}\mathrm{mod}\lra D(\QCoh(\Op_I))$$ compatible with the factorization properties. 
While in Fortuna's thesis all these constructions are obtained somehow for free using the language of chiral algebras (see Section 3.1.6 in \cite{GiorgiaPhD}), in this paper we use the language of vertex algebras and the formalism introduced by Casarin \cite{Casarin1}. Let us notice that, from this point of view, there are no differences in treating the case with two singular points and the case with an arbitrary finite number of singular points. For example, the proof of Theorem A above can be repeated verbatim in the case of $n$ singular points. More generally we believe that all the technical difficulties in the study of this problem already appear in the case of two singularities. 

It is easy to see from the factorization properties and the analogous statement for the case of one singularty by Frenkel and Gaitsgory (see \cite{FG6}) that the semi-infinite cohomology of a $\hgog_I$-spherical module   is supported on $\Op^{\mathrm{unr}}_I$. Hence semi-infinite cohomology restricts to a functor $\Psi_I:\hgog_{I,crit}\text{-}\mathrm{mod}^{JG}\lra D(\QCoh(\Op^{\mathrm{unr}}_I))$. Conjecture 3.6.1 in \cite{GiorgiaPhD} states that this functor is exact and that
$$ \Psi_I^0:\hgog_{I,crit}\text{-}\mathrm{mod}^{JG}\lra \QCoh(\Op^{\mathrm{unr}}_I) $$
is an equivalence of categories. In fact, it can be seen that the first part of Theorem A implies that $\Psi_I$ is exact. Moreover, in the case of $\gog=\gos\gol(2)$, Theorem B yields that the restriction of $\Psi^0_I$ to modules with reduced support is an equivalence. The details will be given in a forthcoming paper. 

\begin{comment}
In Appendix A.2 of \cite{GiorgiaPhD}, one possible strategy to prove this conjecture is sketched, using the factorization structure and the result proved in the case of one singularity to deduce the general case. In particular, thanks to Proposition A.2.2 and Proposition A.2.3 of \cite{GiorgiaPhD} and Theorems A and B above, a more careful study of the modules $\Weyl$ or $\tWeyl$ might help in finding a proof of \cite[Conjecture 3.6.1]{GiorgiaPhD}, in the case of $\gog=\gos\gol(2)$.

\vm{Com'era prima: The first follows more closely the original proof of Frenkel and Gaitsgory, the second one uses the factorization structure and the result proved in the case of one singularity to deduce the general case. Thanks to Proposition A.2.2 and Proposition A.2.3 of \cite{GiorgiaPhD} and Theorems A and B above, it would be enough to prove that the modules $\Weyl$ or $\tWeyl$ are quasi-perfect objects (see Section A.2.1 for the definition), their image is also quasi-perfect and that these modules generate the category of spherical modules, by taking extensions, quotients and colimits. We plan to further investigate this problem in the future.}
\end{comment}

\medskip

The paper is organized as follows. In the first section we recall some definitions from \cite{FLMM}. In Section \ref{sect:Vertex algebras} we recall the formalism introduced by Casarin \cite{Casarin1} and we use it to define semi-infinite cohomology and prove some of its basic properties. In Sections 3 and 4 we compute the semi-infinite cohomology of $\Weyl$ and in Section 5 we compute the semi-infinite cohomology of $\tWeyl$. 

We thank Luca Casarin for many useful discussions and in particular for explaining to us the formalism introduced in \cite{Casarin1}. It seems to us that Casarin's approach provides a natural framework to treat questions concerning opers with several singularities, making the theory much more transparent than it was in \cite{FLMM}. In particular, the results of \cite{Casarin1} allowed us to streamline several arguments and calculations which would have been quite hard to carry out using the direct approach of \cite{FLMM}.

\section{Basic constructions}
In this section we recall some basic constructions from \cite{FLMM}, to which we refer for further details, and we introduce the notion of semi-infinite cohomology in the context of affine Lie algebras with more than one singular point. 

\subsection{Rings} We follow \cite[Section 1]{FLMM}, to which the reader is referred for more details. We introduce the rings
$$
A=\mC[[a]],\qquad 
Q=\mC((a)),\qquad
R_2=\mC[[t,s]],\qquad
K_2=\mC[[t,s]][1/ts],
$$
where $a=t-s$. Recall that we have expansion maps (given by suitable natural inclusions) and a specialisation map (which sends $a$ to $0$ and $t,s$ to $t$, see Section 1.1 in \cite{FLMM})
$$
E_t:K_2[a^{-1}]\lra Q((t)), \qquad E_s:K_2[a^{-1}]\lra Q((s)),\qquad \Sp:K_2\lra \mC((t)).
$$
We also write $E=E_t\times E_s :K_2[a^{-1}]\lra Q((t))\times Q((s))$. Recall from \cite[Section 1.1]{FLMM} that $\Sp$ induces an isomorphism $K_2/(a)\simeq \mC((t))$. These rings have natural topologies: with respect to these, the image of $E$ is dense, and $E(R_2[a^{-1}])$ is dense in $Q((t)) \times Q((s))$.

These rings are also equipped with residue maps
$$
\Res_2:K_2\ra A\quad \Res_1:\mC((t))\ra \mC,
\quad \Res_t:Q((t))\ra Q,
\quad \Res_s:Q((s))\ra Q,
$$
which behave nicely with respect to specialisation and expansion (see \cite[Section 1.2]{FLMM}).
Finally, we recall Lemma 1.10 in \cite{FLMM}. 

\begin{lemma}[\cite{FLMM}, Lemma 1.10]\label{lem:isoMN}
	Let $M, N$ be two $A$-modules and $\grf:M\lra N$ be a morphism of $A$-modules. Then
	\begin{enumerate}[a)]
		\item if $M$ is flat and $\grf_a:M[a^{-1}]\lra N[a^{-1}]$ is injective, then $\grf$ is injective.
		\item if $N$ is flat, $\grf_a:M[a^{-1}]\lra N[a^{-1}]$ is surjective, and $\overline{\grf} :M/aM\lra N/aN$ is injective, then $\grf$ is surjective.
	\end{enumerate}
	In particular, if $M$ and $N$ are flat, $\grf_a:M[a^{-1}]\lra N[a^{-1}]$ is an isomorphism, and $\overline{\grf}:M/aM\lra N/aN$ is injective, then $\grf$ is an isomorphism.
\end{lemma}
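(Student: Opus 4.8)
The plan is to exploit that $A=\mathbb{C}[[a]]$ is a discrete valuation ring with uniformizer $a$ and fraction field $A[a^{-1}]=Q=\mathbb{C}((a))$: over such a ring, flatness of an $A$-module is the same as being torsion-free, and localization at $a$ is exact. With this in hand, part (a) is immediate. If $M$ is flat, the canonical map $M\to M[a^{-1}]$ is injective; given $x\in\ker\varphi$, its image in $M[a^{-1}]$ is, by commutativity of localization with $\varphi$, a preimage under the injective map $\varphi_a$ of $\varphi(x)=0$, hence it is $0$, and therefore $x=0$.

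For part (b) I would set $I=\operatorname{im}\varphi\subseteq N$ and $C=\operatorname{coker}\varphi=N/I$, and aim to show $C=0$. First, since localization is exact, surjectivity of $\varphi_a$ gives $C[a^{-1}]=0$, i.e.\ $C$ is $a$-power torsion; consequently it suffices to prove that $C$ has no nonzero $a$-torsion, because a nonzero $a$-power-torsion module always contains a nonzero element annihilated by $a$ (take an element killed by a minimal positive power of $a$). To control the $a$-torsion, apply $-\otimes_A A/aA$ to the exact sequence $0\to I\to N\to C\to 0$. Flatness of $N$ gives $\operatorname{Tor}_1^A(N,A/aA)=0$, and from the free resolution $0\to A\xrightarrow{\,a\,}A\to A/aA\to 0$ one identifies $\operatorname{Tor}_1^A(C,A/aA)$ with $C[a]:=\ker(a\colon C\to C)$. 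The long exact sequence then yields an exact sequence $0\to C[a]\to I/aI\xrightarrow{\psi} N/aN$, with $\psi$ induced by the inclusion $I\hookrightarrow N$. Finally, the reduction $\overline{\varphi}\colon M/aM\to N/aN$ factors as $M/aM\xrightarrow{\pi} I/aI\xrightarrow{\psi} N/aN$, where $\pi$ is surjective because $M$ surjects onto $I$; since $\overline{\varphi}$ is injective, $\pi$ must be injective too, hence an isomorphism, so $\psi$ is injective and $C[a]=0$. Therefore $C=0$ and $\varphi$ is surjective. The closing ``in particular'' clause is then just the conjunction of (a) and (b), applied with the same $M$, $N$, $\varphi$.

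I do not expect a genuine obstacle here; the one point requiring care is that the $A$-modules involved are not assumed finitely generated, so Nakayama's lemma is unavailable, and the step ``$C$ is $a$-power torsion and has no $a$-torsion, hence $C=0$'' must be argued directly as above rather than through a finiteness argument. Likewise, the identifications of the relevant $\operatorname{Tor}_1^A(-,A/aA)$ groups should be carried out via the explicit length-one resolution of $A/aA$ rather than merely quoted.
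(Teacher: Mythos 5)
Your proof is correct. Note that the paper itself gives no argument for this lemma --- it simply imports it as Lemma~1.10 of \cite{FLMM} --- so there is nothing in the present text to compare against line by line. Your homological route is sound: over the discrete valuation ring $A=\mC[[a]]$ flatness is torsion-freeness, which settles (a); and for (b) the identification $\operatorname{Tor}_1^A(C,A/aA)\simeq C[a]$ via the resolution $0\to A\xrightarrow{\,a\,}A\to A/aA\to 0$, together with the factorisation $\overline{\grf}=\psi\circ\pi$ with $\pi$ surjective, correctly forces $C[a]=0$, and your observation that an $a$-power-torsion module with no $a$-torsion vanishes (no Nakayama needed) closes the argument. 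The only remark worth making is that the same content admits a shorter element-wise phrasing, which is how such statements are usually proved in this context: given $n\in N$, surjectivity of $\grf_a$ and torsion-freeness of $N$ give $a^kn=\grf(m)$ for some minimal $k\geq 0$; if $k>0$ then $\overline{\grf}(\bar m)=0$, so injectivity of $\overline{\grf}$ yields $m=am'$, and torsion-freeness of $N$ lets you cancel $a$ to contradict minimality. Your Tor computation is exactly this descent argument in homological clothing; either version is acceptable.
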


\subsection{Affine Lie algebras and completion of the enveloping algebra}\label{sez:defU}
We follow \cite[Section 3]{FLMM}. Let $\gog$ be a finite-dimensional Lie algebra over the complex numbers and denote by $\kappa$ the Killing form of $\gog$. 
Recall from \cite[Sections 3.1 and 3.3]{FLMM} that for each of the rings of the previous section we introduce an affine Lie algebra: $\hgog_1$ is the usual affine Lie algebra (we take for convenience the version defined by Laurent polynomial and not Laurent series), 
$\hgog_t$ and $\hgog_s$ are also versions of the  usual affine Lie algebra, 
while $\hgog_2$ is an $A$-Lie algebra having as underlying $A$-module the space
$$
\hgog_2=\mC[t,s][1/ts]\otimes_\mC \gog \oplus A \, C_2.
$$
We also introduce the Lie algebra $\hgog_{t,s}=\hgog_t\oplus \hgog_s/(C_t-C_s)$ (see
\cite[Section 3.3]{FLMM}).

For each of these Lie algebras, we introduce the corresponding universal enveloping algebra, which we suitably complete and then specialize at the critical level by imposing that the central element acts as $-1/2$ (see Sections 3.1 and 3.3 in \cite{FLMM}). In particular 
$$
\hU_2=\limpro_n \frac{U(\hgog_2)}{(C_2=-1/2, \; t^ns^n\mC[t,s]\otimes \gog)_{\operatorname{left.id.}}}
$$
Recall from \cite[Section 3.4]{FLMM} that the expansion maps and the specialisation maps induce morphisms at the level of Lie algebras. In particular, the specialisation map $\Sp:\hU_2\lra \hU_1$ induces an isomorphism between $\hU_2/a\hU_2$ and $\hU_1$, while the expansion map induces a morphism $E:\hU_2[a^{-1}]\lra \hU_{t,s}$ which is injective and has dense image. 

Moreover, the natural inclusions $\hgog_t\hookrightarrow \hgog_{t,s}$ and 
$\hgog_s\hookrightarrow \hgog_{t,s}$ induce a morphism 
$$
\hU_t\otimes \hU_s\lra \hU_{t,s}
$$
which is also injective and with dense image (see \cite[Section 3.3]{FLMM}).

\subsection{Weyl modules} We follow \cite[Section 6]{FLMM}. We choose a Borel subalgebra and a maximal toral subalgebra of $\gog$, which we denote by $\gob$ and $\got$ respectively. This data induces a choice of weights, integral weights and dominant weights. For every integral dominant weight $\grl$, \cite{FG6} introduced the Weyl module $\mV^\grl_1$ over the affine Lie algebra $\hgog_1$. The representation $\mV=\mV_1^0$, which has a structure of vertex algebra, will play a particularly important role for us. This vertex algebra enjoys the following universal property.
\begin{lemma}\label{lemma: universal property Weyl module superalgebra}
Let $U$ be a vertex algebra such that there exists a linear map $x\mapsto u_x$ from $\gog$ to $U$ such that
$$
(u_x)_{(0)}(u_y)=u_{[x,y]}\qquad 
(u_x)_{(1)}(u_y)=-\frac 12 \kappa(x,y) \statovuoto_U \qquad 
(u_x)_{(n)}(u_y)=0
$$
for all $n\geq 2$. There exists a unique morphism of vertex algebras $\gra:\mV\ra U$ such that 
$\gra(xt^{-1}\statovuotoV)=u_x$ for all $x\in \gog$.
\end{lemma}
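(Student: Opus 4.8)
The plan is to recognise $\mV=\mV^0_1$ as the universal affine vertex algebra at the critical level. The three displayed conditions on $U$ are nothing but the statement that the modes of the vectors $u_x$ endow $U$ with an action of $\hgog_1$ at the critical level for which $\statovuoto_U$ is a vacuum vector; so $\alpha$ will be produced from Frobenius reciprocity for the induced module and then promoted to a morphism of vertex algebras by the usual ``agreement subalgebra'' argument. Throughout, set $\xi_x:=xt^{-1}\statovuotoV\in\mV$. Recall that $\mV^0_1$ is the module induced from the Lie subalgebra $\gog\otimes\mC[t]\oplus\mC C$ of $\hgog_1$ (with $C$ its central element) on the one-dimensional representation where $\gog\otimes\mC[t]$ acts by zero and $C$ by $-\tfrac12$; hence, by the PBW theorem, $\mV$ is spanned by the ordered monomials $(x^1t^{-n_1})\cdots(x^kt^{-n_k})\statovuotoV$ with $n_i\geq 1$. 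Since the field attached to $\xi_x$ on $\mV$ is the current $Y(\xi_x,z)=\sum_{m\in\mZ}(xt^m)\,z^{-m-1}$, one has $(\xi_x)_{(m)}=xt^m$ as operators on $\mV$, so each such monomial equals $(\xi_{x^1})_{(-n_1)}\cdots(\xi_{x^k})_{(-n_k)}\statovuotoV$, and $\mV$ is strongly generated by $\{\xi_x:x\in\gog\}$. Since a morphism of vertex algebras preserves the vacuum and all $n$-th products, it is determined by its restriction to a strongly generating set; this already yields the uniqueness of $\alpha$.

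For existence I would first build $\alpha$ as a morphism of $\hgog_1$-modules. Write $J^x_m:=(u_x)_{(m)}\in\End(U)$. The Borcherds commutator formula
\[
[(u_x)_{(m)},(u_y)_{(n)}]=\sum_{j\geq 0}\binom{m}{j}\bigl((u_x)_{(j)}u_y\bigr)_{(m+n-j)},
\]
together with the three hypotheses and the identity $(\statovuoto_U)_{(k)}=\delta_{k,-1}\Id$, gives
\[
[J^x_m,J^y_n]=J^{[x,y]}_{m+n}-\tfrac m2\,\kappa(x,y)\,\delta_{m+n,0}\,\Id,
\]
which is precisely the commutation relation of $\hgog_1$ at the critical level (so $C$ acts on $U$ as $-\tfrac12$). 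Moreover, the vacuum axiom gives $J^x_m\statovuoto_U=(u_x)_{(m)}\statovuoto_U=0$ for $m\geq 0$, so $\statovuoto_U$ is killed by $\gog\otimes\mC[t]$ and $C$ acts on it as $-\tfrac12$. By Frobenius reciprocity for the induced module $\mV=\mV^0_1$ there is then a unique $\hgog_1$-equivariant linear map $\alpha\colon\mV\to U$ with $\alpha(\statovuotoV)=\statovuoto_U$, and by equivariance together with the creation axiom, $\alpha(\xi_x)=\alpha\bigl(xt^{-1}\cdot\statovuotoV\bigr)=(u_x)_{(-1)}\statovuoto_U=u_x$, as required.

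It remains to check that $\alpha$ is a morphism of vertex algebras, i.e.\ that $\alpha(a_{(n)}b)=\alpha(a)_{(n)}\alpha(b)$ for all $a,b\in\mV$ and $n\in\mZ$; compatibility with the translation operators then follows automatically from $Ta=a_{(-2)}\statovuoto$. Consider $\mV':=\{a\in\mV:\alpha(a_{(n)}b)=\alpha(a)_{(n)}\alpha(b)\ \text{for all }b\in\mV,\ n\in\mZ\}$. It contains $\statovuotoV$, since both $(\statovuotoV)_{(n)}$ and $(\statovuoto_U)_{(n)}$ equal $\delta_{n,-1}\Id$, and it contains every $\xi_x$: indeed $(\xi_x)_{(n)}=xt^n$ on $\mV$ and $\alpha$ is $\hgog_1$-equivariant, so $\alpha((\xi_x)_{(n)}b)=\alpha(xt^n\cdot b)=J^x_n\alpha(b)=(u_x)_{(n)}\alpha(b)=\alpha(\xi_x)_{(n)}\alpha(b)$. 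Finally, $\mV'$ is closed under the products $(a,a')\mapsto a_{(m)}a'$: this is the standard consequence of skew-symmetry and the Borcherds associativity identity, which express each operator $(a_{(m)}a')_{(n)}$ as a finite combination of composites $a_{(i)}\circ a'_{(j)}$ and $a'_{(j)}\circ a_{(i)}$, through which $\alpha$ can be pushed once $a,a'\in\mV'$. Since $\mV$ is spanned by iterated products of the $\xi_x$ and $\statovuotoV$, this forces $\mV'=\mV$, completing the argument.

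The one genuinely technical step is the last, namely the stability of $\mV'$ under the $n$-th products; it is the routine but somewhat lengthy bookkeeping with the Borcherds identity, and can equally well be absorbed into an appeal to the reconstruction theorem for vertex algebras, after which the conclusion is immediate. All the rest is formal once the affine commutation relations on $U$ have been read off from the commutator formula.
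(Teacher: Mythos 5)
Your proposal is correct. The paper itself gives no proof of this lemma --- it is stated as the standard universal property of the universal (vacuum) affine vertex algebra at the critical level, implicitly deferring to the literature (e.g.\ Frenkel--Ben-Zvi) --- and your argument is exactly the standard one that justifies it: the Borcherds commutator formula turns the three OPE hypotheses into the $\hgog_1$-relations at level $-\tfrac12\kappa$ with $\statovuoto_U$ a vacuum vector, Frobenius reciprocity for the induced module produces the unique equivariant linear map, and strong generation of $\mV$ by the $\xi_x$ together with the closure of the ``agreement set'' $\mV'$ under $n$-th products (via the iterate/associativity identity, or equivalently the reconstruction theorem) upgrades it to a morphism of vertex algebras and gives uniqueness. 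No gaps; the one step you flag as technical is indeed routine and is exactly where an appeal to the reconstruction theorem is customary.
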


Weyl modules $\mV^\grl_t$ and $\mV^\grl_s$ can also be defined for the Lie algebras $\hgog_t$ and $\hgog_s$, without any significant change from \cite{FG6}. In \cite{FLMM}, we introduced a generalization of these modules. Given two dominant weights $\grl,\mu$, we consider the irreducible representations $V^\grl$ and $V^\mu$ of the Lie algebra $\gog$ having highest weights $\grl,\mu$, respectively. In \cite[Definition 6.2]{FLMM}, given two dominant integral weights $\lambda,\mu$ we introduced the module
\[
\mV_2^{\lambda, \mu} = \Ind_{\hgog_2^+}^{\hgog_2} \left(A\otimes_\mC 
V^\lambda \otimes_\mC V^\mu \right),
\]
where $\hgog_2^+= \mC[t,s]\otimes \gog\oplus A\, C_2$ acts on $A\otimes_\mC 
V^\lambda \otimes_\mC V^\mu$ as $$f(t,s)x \cdot (p(a)\otimes u \otimes v) = f(0,-a)p(a)\otimes xu \otimes v + f(a,0)p(a)\otimes   u 
\otimes xv,$$ while $ C_2 $ acts as $-1/2$. In \cite{FLMM} we called this object \textit{the Weyl module of weights $(\lambda, \mu)$}, although, as we will see, it does not have the same properties as its 1-singularity analogue.

We also define
$$ \mW_1^{\grl,\mu}=\Ind_{\hgog_1^+}^{\hgog_1} \left(V^\lambda \otimes_\mC V^\mu \right), $$
where $\hgog_1^+= \mC[t]\otimes \gog\oplus \mC\, C_1$ acts on 
$V^\lambda \otimes_\mC V^\mu$ as $f(t)x \cdot ( u \otimes v) = f(0)x\cdot(u \otimes v)$ and $C_1$ acts as $-1/2$.

The specialisation and expansion maps are defined also for Weyl modules, and induce the following isomorphisms \cite[Lemma 6.3]{FLMM}:
\begin{equation}\label{eq:SEWeyl}
\frac{\mV_2^{\grl,\mu}}{a\mV_2^{\grl,\mu}}\simeq\mW_1^{\grl,\mu},\qquad 
\mV_2^{\grl,\mu}[a^{-1}]\simeq \mV^\grl_t\otimes_Q \mV_s^\mu.
\end{equation}

\subsection{Clifford algebra}\label{sec:Cliff} We now define the Clifford algebra with two singularities, generalizing the construction of the classical case (see for example \cite[Chapter 15]{FB}). 
Let $\gon_+$ be the nilpotent radical of $\gob$ and set 
$$
 X_2 = K_2\otimes_\mC \gon_+ \oplus K_2 \otimes_\mC \gon^*_+.
$$
We equip $X_2$ with the unique $A$-bilinear form such that $K_2\otimes_\mC \gon_+$ and $K_2 \otimes_\mC \gon^*_+$ are isotropic subspaces and
$$
(f\otimes x ; g\otimes \grf)=\Res_2(fg)\,  \grf(x)
$$
for all $f,g\in K_2$, $x\in \gon_+$ and $\grf\in \gon^*_+$. We denote by $\Cl_2$ the associated Clifford algebra over $A$. 

There are obvious variants of the same construction where we replace $K_2$ with the ring 
$\mC[t^{\pm 1}]$ or one of the rings $Q[t^{\pm 1}]$, $Q[s^{\pm 1}]$, $Q[t^{\pm 1}]\times Q[s^{\pm 1}]$. We obtain Clifford algebras that we denote by $\Cl_1,\Cl_t,\Cl_s,\Cl_{t,s}$. 
The algebra $\Cl_U$ in \cite[Section 15.1.1]{FB} is a completion of $\Cl_1$.

These Clifford algebras have a natural grading called the \textit{charge} and denoted by $\cha$. It can be defined as follows: the elements of the base ring have charge 0, while for $\psi\in \gon$ and $\psi^*\in \gon^*$ we have 
\begin{equation}\label{eq:charge}
 \cha \psi =-1, \qquad \cha \psi^*=1.
\end{equation}
The relations defining each Clifford algebra are homogeneous, hence the charge induces a well-defined grading on the Clifford algebra.

   We now introduce completions of the tensor product $\hU_2\otimes_A \Cl_2$. We define
$$
\hU_{2} \hat \otimes_A \Cl_{2} = \limpro_{n}\frac{\hU_2\otimes_A \Cl_2}{\left((ts)^n R_2\gog\otimes 1,\; 1\otimes(ts)^nR_2\gon_+,\;1\otimes(ts)^nR_2\gon^*_+\right)_{\textup{left\,ideal}}} 
$$
and we notice that, as in the case of the algebra $\hU_2$, this $A$-module has a natural structure of $A$-algebra. We introduce the completed Clifford algebras $\hUClu$, $\hUClt$, $\hUCls$, and $\hUClts$. The specialisation and expansion map determine morphisms
$$
\Sp:\hUCld\lra \hUClu \qquad \text{and} \qquad E:(\hUCld)[a^{-1}]\lra \hUClts.
$$
Arguing exacly as in \cite[Lemmas 3.7 and 3.9]{FLMM} we see that $E$ is injective with dense image, while the specialisation map induces an isomorphism
$\hUCld /a (\hUCld) \simeq \hUClu$. Finally, we have an injective map $I:\hUClt\ra \hUClts$ induced by the natural inclusion $K_t\ra K_{t,s}=K_t\times K_s$ given by $f\mapsto (f,0)$. Similarly, we have an injective map $J:\hUCls\ra \hUClts$. As in Section 3.3 of \cite{FLMM}, the product of these maps $I\otimes J:(\hUClt) \otimes _Q (\hUCls)\ra\hUClts$ is injective with dense image.

\subsection{Fock module}We now describe the ``fermionic" Fock spaces corresponding to the Clifford algebras defined in the previous section. As above, for the construction in the case of one singularity we refer to \cite[Section 15.1.4]{FB}: here we mimic this definition in the case of two singularities. We define
$\Cl_2^+$ as the $A$-subalgebra of $\Cl_2$ generated by $R_2\otimes \gon_+$ and $R_2\otimes \gon_+^*$ and we define the Fock module 
$$ \Fock_{2}=\Cl_2\otimes_{\Cl_2^+} A\, \statovuotoFd $$
where $R_2\otimes \gon_+$ and $R_2\otimes \gon_+^*$ acts trivially on $\statovuotoFd$.
The charge (see equation \eqref{eq:charge}) induces a grading on the Fock space by setting 
$$ \cha\statovuotoFd=0. $$
We denote by $\Fock_2^n$ the subspace of homogeneous elements of charge equal to $n$. Similar constructions can be given for all the other Clifford algebras $\Cl_1,\Cl_t,\Cl_s$, and $\Cl_{t,s}$, giving Fock modules $\Fock_1$, $\Fock_t$, $\Fock_s$, and $\Fock_{t,s}$.

Specialisation and expansion, induce maps also at the level of the Fock spaces. Arguing as in \cite[Section 6]{FLMM} (where we considered the module $\mV^{\grl,\mu}_2$), it is easy to prove the following Lemma:

\begin{lemma}\label{lem:SpEFock}\hfill

\begin{enumerate}[\indent a)]
\item The specialisation map $\Sp:\Fock^\bullet_2\lra \Fock^\bullet_1$ is homogeneous of degree zero and induces an isomorphism $\Fock^\bullet_2/a\Fock^\bullet_2\simeq \Fock^\bullet_1$.
\item We have a homogeneous isomorphism of degree zero $\Fock^\bullet_{t,s}\simeq \Fock^\bullet_t\otimes_Q \Fock^\bullet_s$.
\item The expansion map $E:\Fock^\bullet_2[a^{-1}]\lra \Fock^\bullet_t\otimes_Q\Fock^\bullet_s$ is a homogeneous isomorphism of degree zero.
\end{enumerate}
\end{lemma}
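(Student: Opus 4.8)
The plan is to prove the three parts of Lemma~\ref{lem:SpEFock} by mimicking the corresponding argument for the modules $\mV^{\grl,\mu}_2$ in \cite[Section 6]{FLMM}, and then to invoke Lemma~\ref{lem:isoMN} to upgrade module-level statements to isomorphisms. First I would fix a convenient $A$-basis of $\Fock_2$ adapted to the charge grading: choosing a basis $\{x_i\}$ of $\gon_+$ and the dual basis $\{\grf_i\}$ of $\gon_+^*$, the ``negative modes'' $x_i t^{-k}s^{-l}$ (with $(k,l)$ ranging so that $t^{-k}s^{-l}\in K_2$ but $\notin R_2$, i.e. $k\geq 1$ or $l\geq 1$) together with the corresponding $\grf_i t^{-k}s^{-l}$ act freely on $\statovuotoFd$, so that $\Fock_2$ is a free $A$-module with an explicit monomial basis, and the charge of a monomial is (number of $\grf$-factors) minus (number of $x$-factors). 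This makes each $\Fock_2^n$ a free $A$-module, which is the flatness hypothesis needed for Lemma~\ref{lem:isoMN}.

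For part (a): the specialisation map $\Sp\colon\Cl_2\to\Cl_1$ sends $x_i t^{-k}s^{-l}\mapsto x_i t^{-(k+l)}$ and likewise for $\grf$, which preserves the number of $x$- and $\grf$-factors, hence is homogeneous of degree zero for the charge; it maps $\Cl_2^+$ into $\Cl_1^+$ and the vacuum to the vacuum, so it descends to a degree-zero map $\Fock_2^\bullet\to\Fock_1^\bullet$. To see it induces an isomorphism $\Fock_2^\bullet/a\Fock_2^\bullet\simeq\Fock_1^\bullet$, I would compare the two monomial bases: modulo $a=t-s$ the ring $K_2$ maps onto $\mC((t))$, and the negative-mode monomials in $\Fock_2$ reduce onto those spanning $\Fock_1$ — the point is the elementary fact (already used in \cite{FLMM}) that the images of $\{t^{-k}s^{-l}\}_{k\geq 1\text{ or }l\geq 1}$ span $t^{-1}\mC[t^{-1}]$ modulo $a$, with the ``extra'' relations among them accounting precisely for multiplication by $a$. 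This is really the Fock-space shadow of the isomorphism $K_2/(a)\simeq\mC((t))$ and of \eqref{eq:SEWeyl}.

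Part (b) is purely algebraic: for the ring $Q[t^{\pm1}]\times Q[s^{\pm1}]$ the space $X_{t,s}$ splits as an orthogonal direct sum $X_t\oplus X_s$ (the residue pairing has no cross terms), so $\Cl_{t,s}\simeq\Cl_t\hat\otimes_Q\Cl_s$ and $\Cl_{t,s}^+\simeq\Cl_t^+\hat\otimes_Q\Cl_s^+$; taking the induced module from the vacuum gives $\Fock_{t,s}\simeq\Fock_t\otimes_Q\Fock_s$, and since the charge is additive under this tensor decomposition the isomorphism is homogeneous of degree zero. For part (c), the expansion map $E\colon X_2[a^{-1}]\to X_t\oplus X_s$ sends $f\otimes x\mapsto(E_t(f)\otimes x, E_s(f)\otimes x)$; it is compatible with the pairings by the nice behaviour of $\Res_2$ under expansion, sends $\Cl_2^+$-generators into $\Cl_t^+\hat\otimes\Cl_s^+$-generators, and fixes the vacuum, so it induces a degree-zero map $\Fock_2^\bullet[a^{-1}]\to\Fock_t^\bullet\otimes_Q\Fock_s^\bullet$. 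To see it is an isomorphism I would apply Lemma~\ref{lem:isoMN}: both sides are flat over $A$ (indeed over $Q$ after inverting $a$), the map is the localisation of the $A$-module map already considered, and one checks injectivity on bases — or, more slickly, that $E\colon(\hUCld)[a^{-1}]\to\hUClts$ is injective with dense image (stated in Section~\ref{sec:Cliff}) and that $\Fock_2[a^{-1}]$, $\Fock_t\otimes_Q\Fock_s$ are the vacuum Fock modules for these two algebras, which are determined up to unique isomorphism by the pair (generators acting freely, annihilator of vacuum).

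The main obstacle I expect is part (a), specifically making precise the claim that the monomial basis of $\Fock_2$ reduces modulo $a$ exactly onto the monomial basis of $\Fock_1$ with no surplus and no deficit — one has to track carefully that the ``doubled'' negative modes $t^{-k}s^{-l}$ collapse correctly, which amounts to identifying the kernel of $K_2\to\mC((t))$ with $aK_2$ at the level of the relevant filtered pieces. This is exactly the kind of bookkeeping carried out for $\mV_2^{\grl,\mu}$ in \cite[Lemma 6.3]{FLMM}, so the strategy is to cite that computation and indicate that the Fock-space case is formally identical, the only change being that the finite-dimensional ``coefficient'' spaces $V^\grl\otimes V^\mu$ are replaced by the (infinite, but still free over $A$) exterior-algebra-type module built from $\gon_+\oplus\gon_+^*$, on which the combinatorics of charge is transparent.
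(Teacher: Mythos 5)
Your strategy coincides with the paper's: the text gives no proof beyond ``arguing as in \cite[Section 6]{FLMM} \dots it is easy to prove'', i.e.\ exactly the transfer from the $\mV^{\grl,\mu}_2$ case that you propose, together with the freeness observations of Section \ref{sec:basi} and Lemma \ref{lem:isoMN}. Parts (b) and (c) of your argument are fine as written.

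One concrete correction to part (a): the monomials $t^{-k}s^{-l}$ with $k\geq 1$ or $l\geq 1$ do \emph{not} form an $A$-basis of a complement of $R_2$ in $K_2$ — for instance $s^{-1}=t^{-1}+a\,t^{-1}s^{-1}$, so they are not $A$-independent — and hence the ``explicit monomial basis'' of $\Fock_2$ you build from them is not a basis. The right choice is the one the paper makes in Section \ref{sec:basi}: the elements $z_m$ with $m<0$ (equivalently $w_m$ with $m<0$) form an $A$-basis of a complement of $R_2$ in $K_2$, and the resulting PBW-type monomials give the $A$-free basis of each $\Fock_2^n$. With this choice the ``main obstacle'' you flag in part (a) evaporates: since $\Sp(z_{-n})=t^{-2n}$ and $\Sp(z_{-n+\frac12})=t^{-2n+1}$, the basis $\{z_m\}_{m<0}$ maps \emph{bijectively} onto the standard basis $\{t^{-k}\}_{k\geq 1}$ of $t^{-1}\mC[t^{-1}]$, so the monomial basis of $\Fock_2$ specialises bijectively onto that of $\Fock_1$ and no bookkeeping of ``collapsing doubled modes'' is needed; the isomorphism $\Fock_2^\bullet/a\Fock_2^\bullet\simeq\Fock_1^\bullet$ then follows directly (or from Lemma \ref{lem:isoMN} applied degree by degree). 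The rest of your argument goes through unchanged once the basis is fixed.
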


Recall also that the Fock space $\Fock=\Fock_1$ has a natural structure of vertex superalgebra with the following universal property. 
\begin{lemma}\label{lemma: universal property Fock space}
Let $U$ be a vertex superalgebra such that there exists a linear map $x\mapsto u_x$ from $\gon^*_+\oplus \gon_+^*$ to the space of odd elements of $U$ such that 
\begin{enumerate}
\item for all $\grf,\psi\in \gon$ and for all $\grf^*,\psi^*\in \gon^*_+$ 
$$
(u_\psi)_{(n)}(u_\grf)=(u_{\psi^*})_{(n)}(u_{\grf^*})=(u_\psi)_{(m)}(u_{\psi^*})=(u_{\psi^*})_{(m)}(u_{\psi})=0
$$
for all $n\geq 0$ and for all $m\geq 1$;
\item $(u_\psi)_{(0)}(u_{\psi^*})=(u_{\psi^*})_{(0)}(u_{\psi})=\langle \psi,\psi^*\rangle \statovuoto_U$ for all $\psi\in \gon$ and $\psi^*\in \gon^*_+$.
\end{enumerate}
Then there exists a unique morhism of vertex superalgebras $\gra:\Fock\ra U$ such that $\gra(\psi t^{-1}\statovuotoF)=u_\psi$ and $\gra(\psi^* t^{-1}\statovuotoF)=u_{\psi^*}$.
\end{lemma}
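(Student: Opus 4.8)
The plan is to adapt the proof of the universal property of $\mV = \mV_1^0$ (Lemma~\ref{lemma: universal property Weyl module superalgebra}), following the treatment of the Clifford vertex superalgebra in \cite[Chapter 15]{FB}. \emph{Uniqueness} is the easy half: fixing bases of $\gon_+$ and $\gon_+^*$, the vectors $\psi t^{-1}\statovuotoF$ and $\psi^* t^{-1}\statovuotoF$ strongly generate $\Fock = \Fock_1$ --- this is immediate from the description $\Fock_1 = \Cl_1\otimes_{\Cl_1^+}\mC\,\statovuotoF$ together with the standard monomial basis. Since a morphism of vertex superalgebras preserves the vacuum and commutes with all $(n)$-th products, it is determined by its values on a strongly generating set, so there is at most one $\gra$ with the prescribed values.

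For \emph{existence}, the first step is to make $U$ into a (smooth) module over $\Cl_1$. Write $Y(u_\psi, z) = \sum_{n} (u_\psi)_{(n)} z^{-n-1}$ and similarly for $u_{\psi^*}$; since these are fields, on each vector of $U$ all but finitely many of the modes $(u_\psi)_{(n)}$, $(u_{\psi^*})_{(n)}$ act trivially. The Borcherds commutator formula together with hypotheses (1) and (2) gives
$$
[(u_\psi)_{(n)}, (u_{\psi^*})_{(m)}]_+ = \langle\psi,\psi^*\rangle\,\delta_{n+m,-1}\,\Id_U, \qquad [(u_\psi)_{(n)}, (u_\grf)_{(m)}]_+ = [(u_{\psi^*})_{(n)}, (u_{\grf^*})_{(m)}]_+ = 0,
$$
because only the $k=0$ term of $\sum_{k\geq 0}\binom{n}{k}\big((u_\psi)_{(k)}u_{\psi^*}\big)_{(n+m-k)}$ survives and $(\statovuoto_U)_{(j)} = \delta_{j,-1}\Id_U$. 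Recalling that the bilinear form on $X_1$ satisfies $(t^n\otimes\psi\,;\,t^m\otimes\psi^*) = \Res_1(t^{n+m})\langle\psi,\psi^*\rangle = \delta_{n+m,-1}\langle\psi,\psi^*\rangle$, these are exactly the defining relations of $\Cl_1$ under $\psi t^n\mapsto (u_\psi)_{(n)}$, $\psi^* t^n \mapsto (u_{\psi^*})_{(n)}$. Moreover $Y(u_\psi,z)\statovuoto_U\in U[[z]]$ and likewise for $u_{\psi^*}$, so $(u_\psi)_{(n)}\statovuoto_U = (u_{\psi^*})_{(n)}\statovuoto_U = 0$ for $n\geq 0$; equivalently $\statovuoto_U$ is annihilated by $\mC[t]\otimes\gon_+$ and $\mC[t]\otimes\gon_+^*$, that is, by the augmentation ideal of $\Cl_1^+$. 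By the universal property of induction there is then a unique $\Cl_1$-linear map $\gra:\Fock\to U$ with $\gra(\statovuotoF) = \statovuoto_U$, and by construction $\gra(\psi t^{-1}\statovuotoF) = u_\psi$ and $\gra(\psi^* t^{-1}\statovuotoF) = u_{\psi^*}$.

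It remains to check that $\gra$ is a morphism of vertex superalgebras. The field $Y(\psi t^{-1}\statovuotoF, z)$ on $\Fock$ has modes $(\psi t^{-1}\statovuotoF)_{(n)} = \psi t^n$ (and similarly for $\psi^*$), so $\Cl_1$-linearity of $\gra$ says precisely that $\gra(a_{(n)} b) = \gra(a)_{(n)}\gra(b)$ for all $b\in\Fock$ and all $n$ when $a$ is one of the generators $\psi t^{-1}\statovuotoF$, $\psi^* t^{-1}\statovuotoF$; it holds trivially for $a = \statovuotoF$. One now invokes the standard fact (see \cite[Chapter 5]{FB}, or argue directly via the Borcherds identity) that the set of $a\in\Fock$ for which $\gra(a_{(n)}b) = \gra(a)_{(n)}\gra(b)$ holds for all $b,n$ is closed under all $(m)$-th products; since it contains a strongly generating set it is all of $\Fock$, so $\gra$ intertwines every $Y$-operation, and compatibility with the translation operators then follows from $T a = a_{(-2)}\statovuotoF$. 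The only genuinely delicate point is the bookkeeping of the Koszul signs coming from the odd parity of the generators --- both in the commutator formula above and, more importantly, in the closure-under-products step in the super setting --- but no real obstacle arises, as this lemma is simply the fermionic analogue of Lemma~\ref{lemma: universal property Weyl module superalgebra}.
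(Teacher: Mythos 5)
The paper states this lemma without proof, treating it as a standard universal property of the fermionic Fock vertex superalgebra (the analogue for $\Fock$ of Lemma~\ref{lemma: universal property Weyl module superalgebra}, both implicitly referred to \cite[Chapter 15]{FB}), so there is no in-paper argument to compare against. Your proof is correct and is exactly the standard one: the anticommutator form of the Borcherds formula turns hypotheses (1)--(2) into the defining relations of $\Cl_1$, the vacuum axiom makes $\statovuoto_U$ a $\Cl_1^+$-invariant vector so that induction yields the linear map, and the closure-under-products argument (with the usual Koszul signs) upgrades it to a morphism of vertex superalgebras; no gaps.
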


\subsection{Bases}\label{sec:basi}
For each of the objects introduced above -- base rings, enveloping algebras, Clifford algebras, and Fock spaces -- it is not hard to construct explicit bases (or topological bases). We give the details in the case of two singularities. The construction of a basis depends on the choice of a basis of $\mC[t,s][1/ts]$ as an $A$-module. Following \cite{FLMM}, Section 1.1 and Equation (4.1) we introduce the following bases, indexed by $\tfrac12 \mZ$: for $n\in \mZ$ we define
$$
\begin{cases} z_n =t^ns^n \\ z_{n+\frac12 }=t^{n+1}s^{n} \end{cases}
\qquad 
\begin{cases} w_n =t^ns^n \\ w_{n+\frac12 }=t^{n}s^{n+1} \end{cases}
$$
The elements $z_m$ for $m\in \tfrac12 \mZ$ form a basis of $\mC[t,s][1/ts]$ as an $A$-module, and the elements $w_n$ are the dual basis with respect to the residue bilinear form: more precisely, one has
$$ \Res_2(z_n w_{-m-\tfrac 12})=\delta_{n,m}. $$
This specific choice of basis is not particularly important, and several others would be possible. However, some properties need to be satisfied for our approach to work.
%: for example, the basis should be compatible with the topology, and in particular with the subring $\mC[t,s]$ \dl{Non mi è chiaro cosa voglia dire questa frase}\am{l'ho levata}. 
In particolar with our choice, the elements $z_m$ (or $w_m$) with $m\geq 0$ form an $A$-basis of $\mC[t,s]$.

Since $K_2$ is an $A$-free module, we deduce that the enveloping algebras of $\gog_2$ and $\Cl_2$ are $A$-free modules. Moreover, as $R_2$ is a direct summand of $K_2$, we also deduce that $\mV^{\grl,\mu}_2$ and $\Fock_2$  are also $A$-free modules. Explicit bases of these modules, as well as an explicit topological basis of the algebra $\hU_2\hat \otimes _A\Cl_2$, can be obtained using the Poincar\'e-Birkhoff-Witt theorem and its analogue for Clifford algebras. 

\section{Vertex algebras and semi-infinite cohomology}\label{sect:Vertex algebras}
In this section, we recall some results obtained  by Casarin \cite{Casarin1}
which allow us to use the formalism of vertex algebras also in the context of several singularities. In particular, using this formalism  we develop a notion of semi-infinite cohomology for $\hU_2$-modules.

\subsection{Distributions and vertex algebra morphisms}\label{sec:distribuzioni}
Let $\calR$ be a complete topological associative $A$-algebra. Following \cite[Definition 3.0.4]{Casarin1},  we denote by $\field_A(K_2,\calR)$ the space of continuous $A$-linear morphisms from $K_2$ to $\calR$ and call it the \textit{space of 2-fields}. 
We refer to \cite{Casarin1} for the definitions of mutually local $2$-fields (Definition 3.1.1), of the $n$-products $X_{(n)}Y$ of two $2$-fields (Definitions 3.1.2 and 3.1.7) and of the derivative $\partial(X)$ of a $2$-field (before Definition 3.0.3). The definition in \cite{Casarin1} applies also to the other rings we are considering: $K_1,K_t,K_s,K_{t,s}$. 

In particular to define $n$ products it is necessary to choose what in \cite{Casarin1},  Definition 2.3.8, is called a global coordinate. We choose always $t$ as a global coordinate. More explicitly for the rings $K_2,K_1,K_t$ and $K_s$ we choose $t=s+a$ as a global coordinate, and for the ring $K_{t,s}=K_t\times K_s$ we choose $(t,t)=(t,s+a)$. 

We also use some foundational results proved in this context in \cite{Casarin1}. In particular, the following result will be crucial for us. 

\begin{theorem}[\cite{Casarin1}, Theorem 3.2.3]\label{teo:casarin} 
Let $\calF$ be a $\mC$-linear subspace of $\field_A(K_2,\calR)$ of mutually local $2$-fields closed under derivation and $n$-products. Let $\uno$ be a field such that $\uno(f)$ is central for every $f\in K_2$, that $\partial\uno =0$ and such that $\uno_{(n)}X=\delta_{n,-1}X$ for all $X\in \calF$. Then the vector space $\calF+\mC\uno$, endowed with $n$-products and derivation $T=\partial$, is a $\mC$-vertex algebra with $\uno$ as vacuum vector. 
\end{theorem}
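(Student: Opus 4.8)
The plan is to adapt to the two-point setting the classical existence (reconstruction) theorem for vertex algebras (see e.g. \cite{FB}), using the calculus of $2$-fields developed in \cite{Casarin1}. Write $V=\calF+\mC\uno$. The candidate structure is: vacuum $\uno$, translation operator $T=\partial$, and state-field correspondence $Y(X,z)=\sum_n X_{(n)}z^{-n-1}$ given by the $n$-products, which act as operators $X_{(n)}\colon V\to V$ because $\calF$ is closed under $n$-products and $\uno_{(n)}X=\delta_{n,-1}X$ (and $\uno_{(n)}\uno=\delta_{n,-1}\uno$). Note that, unlike in the classical reconstruction theorem, there is no well-definedness issue to resolve: the $n$-products are given as operations on all of $V$, so one has only to verify the vacuum axiom, translation covariance and locality, after which the Borcherds identity follows formally.

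First I would dispose of the easy axioms. The vacuum axiom $Y(\uno,z)=\mathrm{id}_V$ is precisely the hypothesis $\uno_{(n)}X=\delta_{n,-1}X$; its companion, the creation property $X_{(-1)}\uno=X$ and $X_{(n)}\uno=0$ for $n\geq 0$ (more generally $X_{(-n-1)}\uno=\tfrac1{n!}\partial^n X$), follows from $\partial\uno=0$ and translation covariance by the usual $e^{zT}$ argument, once the latter is in place. Translation covariance itself — that is, $(\partial X)_{(n)}=-nX_{(n-1)}$ together with the Leibniz rule $\partial(X_{(n)}Y)=(\partial X)_{(n)}Y+X_{(n)}(\partial Y)$ — belongs to the basic calculus of $2$-fields and $n$-products of \cite{Casarin1}, and $\partial\uno=0$ is assumed. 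Finally, all pairs in $V$ are mutually local: pairs in $\calF$ are so by hypothesis, and $\uno$ is mutually local with every $2$-field because it takes central values, so the relevant commutator vanishes identically.

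It then remains to derive the Borcherds identity on $V$ from vacuum $+$ translation $+$ locality. I would follow the classical route, which rests on three ingredients, each of which must be available in the $2$-field formalism: (i) a two-point Dong lemma, ensuring that mutual locality is preserved by $n$-products and by $\partial$, so that $\{Y(X,z):X\in V\}$ is a system of pairwise local fields; (ii) the commutator formula $[X_{(m)},Y_{(n)}]=\sum_{j\geq0}\binom{m}{j}(X_{(j)}Y)_{(m+n-j)}$, a consequence of locality; and (iii) the associativity / $n$-th product formula expressing $Y(X_{(n)}Y,z)$ through $Y(X,z)$ and $Y(Y,z)$ (up to divided powers of $\partial$ and normal ordering), again a consequence of locality. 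As classically, (ii) and (iii) together are equivalent to the Borcherds identity, which completes the proof that $(V,\uno,\partial,Y)$ is a $\mC$-vertex algebra.

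The hard part is the combinatorial core underlying (i)--(iii). In the classical case, ``locality $\Rightarrow$ OPE and $n$-th product formula'' is pure formal-distribution calculus: expansions of $(z-w)^{-1}$, the formal delta function, partial fractions. In the two-point case all of this has to be rebuilt for the rings $K_2$ (and $K_t,K_s,K_{t,s}$), with expansions taken in the chosen global coordinate $t=s+a$ and ``modes'' indexed by $\tfrac12\mZ$ rather than $\mZ$; this is exactly the content of \cite[Section 3]{Casarin1}, after which the argument proceeds verbatim as in the classical case. A softer alternative, in the spirit of the present paper, would be to check the Borcherds identity as an equality in $\calR$ after inverting $a$: via the expansion map $E$ and density, $\calF[a^{-1}]$ becomes two mutually local systems of ordinary fields over $Q$ at the points $t$ and $s$, to which the classical theorem applies, and one then concludes by $A$-flatness of $\calR$ — which holds in the cases of interest by Section \ref{sec:basi} — using Lemma~\ref{lem:isoMN}; but this requires flatness of $\calR$ and some care with continuity, so for the general statement the direct approach of \cite{Casarin1} seems preferable.
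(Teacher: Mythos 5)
A preliminary remark: the paper does not prove this statement at all. It is imported as a black box from \cite{Casarin1} (Theorem 3.2.3), so there is no internal proof to compare yours against; I can only assess your outline as a reconstruction of Casarin's argument. On those terms the strategy is the right one: this is the two-point analogue of the classical theorem that a space of mutually local fields containing the identity field and closed under $n$-products and $\partial$ is itself a vertex algebra, and your route (vacuum and translation axioms, locality of the fields $Y(X,\cdot)$ via a Dong-type lemma, then the commutator and $n$-th product formulas, which together give the Borcherds identity) is the standard proof of that theorem. You are also right that the only genuinely new content is the two-point formal calculus over $K_2$ in the chosen global coordinate, which is precisely \cite[Section 3]{Casarin1}; deferring that is legitimate for a result the paper itself only cites. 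One small correction: the $n$-products of $2$-fields are indexed by $\mZ$ (cf.\ the OPE formula quoted in Section \ref{ssec:commutazioni}); the $\tfrac12\mZ$-indexing concerns only the topological basis of $K_2$, not the vertex-algebra structure.

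Two caveats. First, the creation property does \emph{not} follow ``from $\partial\uno=0$ and translation covariance by the usual $e^{zT}$ argument'': that argument solves $\partial_z\bigl(Y(X,z)\uno\bigr)=T\,Y(X,z)\uno$ with initial condition $Y(X,z)\uno\big|_{z=0}=X$, i.e.\ it \emph{presupposes} creation. The hypotheses on $\uno$ constrain $\uno_{(n)}X$, not $X_{(n)}\uno$, so here one must compute $X_{(n)}\uno$ directly from the definition of the $n$-products: centrality of the values of $\uno$ kills the commutator terms, giving $X_{(n)}\uno=0$ for $n\geq 0$, and the $(-1)$-product (normally ordered product with $\uno$) must be identified with $X$ by hand, exactly as one does classically for the identity field. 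This is a genuine, though entirely fixable, gap in your sketch. Second, your ``softer alternative'' via $E_\field$ and localization cannot prove the theorem as stated: $\calR$ is an arbitrary complete topological $A$-algebra, so nothing guarantees that $\calR$ injects into $\calR[a^{-1}]$, and one would additionally need $E_\field$ to intertwine the $n$-products on the nose (which itself depends on the coordinate conventions of Section \ref{sec:distribuzioni}). You concede this yourself, so it does not damage the main line of the argument, but it should not be offered as a proof of the general statement.
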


It is straightforward to generalize the constructions and  results in \cite{Casarin1} to the case of superalgebras $\calR$. 

\medskip

We are interested in the case where $\calR$ is the superalgebra $\hU_2\hat\otimes_A \Cl_2$. For  $x\in \gog$, 
$\psi\in \gon_+$ and $\psi^*\in \gon_+^*$ we define the 2-fields
\begin{equation}\label{eq:campi}
x^{(2)}(g) = (x\otimes g)\otimes 1_{\Cl_2},
\quad  \psi^{[2]}(g)    = 1_{\hat U_2}\otimes (\psi\otimes g),
\quad (\psi^*)^{[2]}(g) = 1_{\hat U_2}\otimes (\psi^*\otimes g) 
\end{equation}
for all $g\in K_2$. The first of these fields has even parity with respect to the superalgebra structure, while the second and third ones are odd. These fields are mutually local. We consider the 
minimal $\mC$-linear subspace $\calF^{(2)}$ of $\hU_2\hat\otimes_A \Cl_2$ closed under $n$-products and derivation and containing the fields  \eqref{eq:campi}.  Moreover, we define
$$ \uno_2 (f)=\Res_2(f) \big(1_{\hU_2}\otimes 1_{\Cl_2}\big). $$
It is easy to check that this data satisfies the hypothesis of Theorem \ref{teo:casarin}. Therefore,
$\calV^{(2)}=\calF^{(2)}+\mC\uno_2$ has a structure of vertex superalgebra, and by the universal properties of the vertex algebra $\mV$ (Lemma \ref{lemma: universal property Weyl module superalgebra}) and of the vertex superalgebra $\Fock^\bullet$ (Lemma \ref{lemma: universal property Fock space}) it follows that there exists a morphism of vertex superalgebras
\begin{equation}\label{eq: Phi 2}
\Phi^{(2)}: \mV\otimes_\mC \Fock^\bullet \lra \calV^{(2)}.
\end{equation}
This homomorphism will allow us to easily introduce  many elements in $\calV^{(2)}$, hence also in $\hU_2\hat\otimes_A \Cl_2$. 

Similar constructions apply if the algebra $\hU_2\hat\otimes_A \Cl_2$ is replaced by the algebras $\hU_1\hat\otimes \Cl_1$, $\hU_t\hat\otimes_Q\Cl_t$, etc. Hence, we construct the fields $x^{(1)}$, $\psi^{[1]}$, $x^{(t)}$, $\psi^{[t]}$, the vertex superalgebras $\calV^{(1)}$, $\calV^{(t)}$, and homomorphisms of vertex algebras $\Phi^{(1)}:  \mV\otimes_\mC \Fock^\bullet \lra \calV^{(1)}$, $\Phi^{(t)}:  \mV\otimes_\mC \Fock^\bullet \lra \calV^{(t)}$, etc. 

Notice that we have a specialisation morphism 
$
\Sp_\field:\field_A(K_2,\hUCl 2)\lra \field_{\mC}(K_1,\hU_2\hat\otimes \Cl_1)
$
and an expansion map $E_\field:\field_A(K_2,\hU_2\hat\otimes_A \Cl_2)\lra \field_Q(K_{t,s},\hU_2\hat\otimes_Q \Cl_{t,s})$, determined by the conditions 
\[
\big(\Sp_\field(X)\big)(\Sp(f))=\Sp(X(f)) \quad \text{ and } \quad \big(E_\field(X)\big)(E(f))=E(X(f)).
\]
These maps commute with $n$-products and derivations and satisfy 
$\Sp_\field(\uno_2)=\uno_1$ and $E_\field(\uno_2)=\uno_{t,s}$.
Moreover, by construction they satisfy
\[
\Sp_\field(x^{(2)})=x^{(1)}\qquad \text{and} \qquad E_\field(x^{(2)})=x^{(t,s)}
\]
for $x\in \gog$. Similar relations hold for $\psi^{[2]}$ and $(\psi^*)^{[2]}$. 
This implies in particular that the homomorphisms $\Sp_\field$ and $E_\field$ restrict to homomorphisms of vertex algebras $\Sp:\calV^{(2)}\lra \calV^{(1)}$ and $E:\calV^{(2)}\lra \calV^{(t,s)}$ such that 
$$ \Sp\circ \Phi^{(2)}=\Phi^{(1)}\qquad E\circ \Phi^{(2)}=\Phi^{(t,s)}. $$
We can also describe the morphism $\Phi^{(2)}$ through the morphisms $\Phi^{(t)}$ and $\Phi^{(s)}$. Recall from the end of Section \ref{sec:Cliff} the maps $I,J$ from $\hUClt$ and $\hUCls$ to $\hUClts$. These maps induce maps at the level of fields 
$I_\field:\field_Q(K_t,\hUClt)\ra \field_Q(K_{t,s},\hUClts)$ and
$J_\field:\field_Q(K_s,\hUCls)\ra \field_Q(K_{t,s},\hUClts)$, given by
$$
I_\field(X)(f,g)=I(X(f)) \qquad \text{ and }\qquad  J_\field(X)(f,g)=J(X(g))
$$
for all $(f,g)\in K_t\times K_s=K_{t,s}$. The maps $I_\calF$ and $J_\calF$ preserve $n$-products,  commute with derivations, and satisfy $I_\calF(\uno_t)+J_\calF(\uno_s)=\uno_{t,s}$. Moreover we notice that $I(u)$ and $J(v)$ commute for all
$u\in \hUClt$ and $v\in \hU_s\hat\otimes_Q\Cl_s$. By the discussion in \cite[Section 7.2]{Casarin1}, this implies 
$$
I_\calF \circ \Phi^{(t)}+J_\calF \circ \Phi^{(s)}=\Phi^{(t,s)}.
$$
This is the only statement where it is relevant the choice of the global coordinate we have done in Section \ref{sec:distribuzioni}.

\subsection{Semi-infinite cohomology}\label{ssec:comologiasemi}
We now define a notion of semi-infinite cohomology for $\hU_2$-modules, in analogy with the analogous notion for $\hU_1$-modules described for example in \cite[Chapter 15]{FB}. To this end, we introduce some notation for elements in the vertex superalgebra $\mV\otimes \Fock^\bullet$.
As in the case of $\hU_1$, to describe these elements we choose a basis $J^a$ of $\gog$ compatible with the decomposition $\gog=\gon_-\oplus \got \oplus \gon_+$, where $\gon_+$ is the nilpotent radical of $\gob$ and $\gon_-$ is the radical of the opposite nilpotent borel subalgebra. We denote by $c^{b,d}_{e}$ the structure coefficients of the Lie bracket with respect to this basis. We denote by $\Phi\sqcup \Gamma$ the indexing set of the basis $J^a$, so that, if $\gra\in \Phi$, then $J^\gra=e_\gra=f_{-\gra}$ is a root vector of weight $\gra$ 
and, if $\gra \in \Gamma$, then $J^\gra \in \got$. We also denote by $\psi_\gra^*$ for $\gra\in \Phi^+$ the basis of $\gon_+^*$ dual to the basis $e_\gra$  of $\gon_+$.

With each element in $\gon_+\otimes \cdots \otimes \gon_+ \otimes \gon^*_+\otimes \cdots \otimes \gon^*_+$ we associate an element in the vertex superalgebra $\Fock$ as follows:
$$ N(\psi_1\otimes\dots\psi_\ell\otimes \psi^*_1\otimes \dots \otimes \psi_m^*)=(\psi_1 t^{-1})\cdots(\psi_\ell t^{-1})\cdot(\psi^*_1 t^{-1})\cdots(\psi_m^* t^{-1})\cdot\statovuotoF. $$

Similarly, with an element in $\gog\otimes\gon_+^*$ we associate an element in the vertex superalgebra $\mV\otimes \Fock^*$ by setting
$$ M(x\otimes \psi^*)=(x t^{-1})\cdot\statovuotoV\otimes (\psi^*t^{-1})\cdot\statovuotoF. $$
Following \cite[Chapter 15]{FB} we define
\begin{align*}
q= & M(I)-\frac 12 \statovuotoV\otimes N(B)=
\sum_{\gra\in\Phi^+}(e_\gra t^{-1})\cdot \statovuotoV \otimes (\psi^*_\gra t^{-1})\cdot\statovuotoF \\
&-\frac{1}{2}\sum_{\gra,\grb\in\Phi^+} c^{\gra,\grb}_{\gra+\grb}\;  \statovuotoV\otimes (e_{\gra+\grb} t^{-1})\cdot(\psi^*_\gra t^{-1})\cdot(\psi^*_\grb t^{-1})\cdot\statovuotoF,
\end{align*}
where $I\in \gog\otimes \gon_+^*$ represents the inclusion of $\gon_+$ in $\gog$ and $B\in \gon_+\otimes \gon_+^*\otimes \gon_+^*$ is the Lie bracket. We now define the boundary operator $d^{(2)}_{\std} \in \hU_2\hat\otimes_A \Cl_2$ as follows:
$$ d_{\std}^{(2)}:=\big(\Phi^{(2)}(q)\big)(1). $$

The boundary operator that we will use to define the semi-infinite cohomology is a deformation of $d_{\std}^{(2)}$. Let $\psi_{\pr}^*=\sum_{\gra \, \text{simple}}\psi^*_\gra\in \gon_+^*$, and define 
$$ \chi^{(2)}=1_{\hU_2}\otimes \psi_{\pr}^{*}=\Phi^{(2)}(N(\psi^*_{\pr}))(1) \in \hU_2\otimes_A \Cl_2. $$
Similar constructions yield  $\chi^{(s)}$, $\chi^{(t)}$, $\chi^{(s)}$, and $\chi^{(s,t)}$. Finally set
$$ d^{(2)}=d_{\std}^{(2)}+\chi^{(2)}.$$
As we will check in Section \ref{ssec:commutazioni}, this is an element that squares to zero, and therefore, it can be used to define the semi-infinite cohomology of a $\hU_2$-module. 

Similarly we can define $d_{\std}^{(1)}$, $\chi^{(1)}$, $d^{(1)}$, $d_{\std}^{(t)}$, $\chi^{(t)}$, $d^{(t)}$, and so on, as elements of the corresponding superalgebras. By the discussion at the end of Section 
\ref{sec:distribuzioni} we have 
$$
\Sp(d^{(2)})=d^{(1)}, \qquad 
E(d^{(2)})=d^{(t,s)}, \quad \text{ and } \quad 
I(d^{(t)})+J(d^{(s)})=d^{(t,s)}. 
$$

%Since $\Phi$ is a morphism of vertex algebras from Lemma \ref{lem:commutazioni} in the next Section it follows that $d^{(2)}_{std} =0$. 

\begin{definition}\label{dfn:semiinfinitecohomology}
Let $M$ be an $\hU_2$ module. Consider the $\hU_2\hat\otimes_A\Cl_2$-graded module $M\otimes_A \Fock^\bullet_2$, where the grading is given by charge on $\Fock^\bullet_2$. The element $d^{(2)}$ acts on this module as a boundary operator of degree one. Define $\semiinf^n(M)$ as the corresponding cohomology of degree $n$.
\end{definition}

Similar constructions apply to modules over the algebras $\hU_1$, $\hU_t$, $\hU_s$ or $\hU_{t,s}$.

Let $Z_2$ be the center of the algebra $\hU_2$, and similarly introduce the center 
$Z_1$ of $\hU_1$ and the centers $Z_t$ and $Z_s$ of $\hU_t$ and $\hU_s$.
If $M$ is an $\hU_2$-module, the action of $Z_2$ on 
$M\otimes _A \Fock_2^\bullet$ commutes with the differential $d^{(2)}$ and preserves the charge, hence induces an action of $Z_2$ on the semi-infinite cohomology groups of $M$. A similar action is defined in the case of $\hU_1$-modules or $\hU_t$-modules.

Recall that a module $M$ over a topological algebra $\calR$ is said to be \textit{smooth} if the action of $\calR$ on $M$ is continuous with respect to the discrete topology on $M$.
Notice that, if $M$ is a smooth $\hU_2$-module, then, since the map $E$ has dense image, the action of $\hU_2$ on $M$ extends to a smooth action of $\hU_{t,s}$ on $M[a^{-1}]$. Similarly, if $M_t$ is a smooth $\hU_t$-module and $M_s$ is a smooth $\hU_s$-module, then there is an induced action of $\hU_{t,s}$ on $M_t\otimes_Q M_s$. In the next section we will use the following properties of the semi-infinite cohomology. 

\begin{lemma}\label{lem:SEcomologia}
\hfill
\begin{enumerate}[\indent a)]
\item Given a short exact sequence of $\hU_2$-modules, there is an induced long exact sequence in semi-infinite cohomology.

\item Let $M$ be an $\hU_1$-module. The semi-infinite cohomology of $M$ as an $\hU_1$-module is isomorphic to the semi-infinite cohomology of $M$ considered as an $\hU_2$-module through the map $\Sp$. 

\item Let $M$ be an $\hU_{t,s}$-module. The semi-infinite cohomology of $M$ as an $\hU_{t,s}$-module is isomorphic to the semi-infinite cohomology of $M$ considered as an $\hU_2$-module through the map $E$.  In particular, this applies to the case where $M=N[a^{-1}]$ is the localization of a smooth $\hU_2$-module $N$. 

\item Let $M_t$ be a smooth $\hU_{t}$-module, $M_s$ be a smooth $\hU_s$-module, and let $M := M_t\otimes_Q M_s$, regarded as a $\hU_{t,s}$-module. The complex computing the semi-infinite cohomology  of $M$ is the total complex associated with the double complex given by the tensor product of the complex  computing the semi-infinite cohomology of $M_t$ and that of $M_s$. 
In particular, being the base ring $Q$ a field,  if $M_t$ and $M_s$ have non zero semi-infinite cohomology only in degree zero, then $M$ considered as an $\hU_{t,s}$-module has semi-infinite cohomology only in degree zero and the cohomology in degree zero is isomorphic to the product of the tensor product of $\semiinfz(M_t)$ and $\semiinfz(M_s)$.
\end{enumerate}

\end{lemma}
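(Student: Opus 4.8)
The plan is to prove the four statements of Lemma~\ref{lem:SEcomologia} by systematically exploiting the compatibility of the differentials $d^{(2)}$, $d^{(1)}$, $d^{(t,s)}$, $d^{(t)}$, $d^{(s)}$ with the specialisation, expansion and inclusion maps, together with the corresponding compatibilities for the Fock spaces recorded in Lemma~\ref{lem:SpEFock}. For part a), I would observe that for any short exact sequence $0\to M'\to M\to M''\to 0$ of $\hU_2$-modules, tensoring over $A$ with $\Fock^\bullet_2$ preserves exactness, because $\Fock^\bullet_2$ is $A$-free by the discussion in Section~\ref{sec:basi}; hence we obtain a short exact sequence of complexes with differential $d^{(2)}$, and the long exact sequence in cohomology is the standard one.

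For parts b) and c), the point is that the relevant comparison map is literally an isomorphism of complexes, not merely a quasi-isomorphism. In case b), given an $\hU_1$-module $M$, regard it as an $\hU_2$-module via $\Sp:\hU_2\to \hU_1$; then $M\otimes_A\Fock^\bullet_2$ has $a$ acting as zero on the first factor, so it coincides with $M\otimes_{\mC}(\Fock^\bullet_2/a\Fock^\bullet_2)\simeq M\otimes_\mC\Fock^\bullet_1$ by Lemma~\ref{lem:SpEFock}(a), and under this identification the action of $d^{(2)}$ becomes the action of $\Sp(d^{(2)})=d^{(1)}$ (using the relation $\Sp(d^{(2)})=d^{(1)}$ established at the end of Section~\ref{sec:distribuzioni}). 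So the two complexes agree on the nose. In case c), given an $\hU_{t,s}$-module $M$ viewed as an $\hU_2$-module via $E:\hU_2[a^{-1}]\to\hU_{t,s}$ (note $a$ acts invertibly on $M$, so the $\hU_2$-action factors through the localization), one has $M\otimes_A\Fock^\bullet_2 = M\otimes_A\Fock^\bullet_2[a^{-1}] = M\otimes_Q(\Fock^\bullet_2[a^{-1}])$, and $E:\Fock^\bullet_2[a^{-1}]\xrightarrow{\sim}\Fock^\bullet_{t,s}$ is an isomorphism by Lemma~\ref{lem:SpEFock}(c); the differential $d^{(2)}$ is carried to $E(d^{(2)})=d^{(t,s)}$, giving an isomorphism of complexes. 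The ``in particular'' follows because a smooth $\hU_2$-module $N$ extends to a smooth $\hU_{t,s}$-module on $N[a^{-1}]$ exactly as noted before the lemma.

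For part d), with $M=M_t\otimes_Q M_s$, I would use the identification $\Fock^\bullet_{t,s}\simeq\Fock^\bullet_t\otimes_Q\Fock^\bullet_s$ from Lemma~\ref{lem:SpEFock}(b), which is bigraded by the two charges, so that
$$
M\otimes_Q\Fock^\bullet_{t,s}\simeq (M_t\otimes_Q\Fock^\bullet_t)\otimes_Q(M_s\otimes_Q\Fock^\bullet_s)
$$
as a bigraded $Q$-vector space. The differential $d^{(t,s)}$ decomposes via $I(d^{(t)})+J(d^{(s)})=d^{(t,s)}$, and since $I(u)$ and $J(v)$ commute (as recorded in Section~\ref{sec:Cliff}), the operators $I(d^{(t)})\otimes 1$ and $\pm 1\otimes J(d^{(s)})$ (with the Koszul sign coming from the odd parity of $d^{(s)}$) act as the two anticommuting differentials of a double complex whose total differential is $d^{(t,s)}$. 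Hence the complex computing $\Psi^\bullet(M)$ is the total complex of the tensor product double complex, as claimed. For the last assertion, since $Q$ is a field there are no $\mathrm{Tor}$ terms and the Künneth theorem gives $\Psi^n(M)\simeq\bigoplus_{p+q=n}\Psi^p(M_t)\otimes_Q\Psi^q(M_s)$; if each factor is concentrated in degree $0$, so is the tensor product, and $\Psi^0(M)\simeq\Psi^0(M_t)\otimes_Q\Psi^0(M_s)$.

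The only genuinely delicate point I anticipate is the bookkeeping of signs and parities in part d): one must check that under the isomorphism $\Fock^\bullet_{t,s}\simeq\Fock^\bullet_t\otimes_Q\Fock^\bullet_s$ the odd element $d^{(s)}$, acting on the second tensor factor, really does anticommute (rather than commute) with $I(d^{(t)})$ acting on the first, so that the two pieces genuinely assemble into a double complex rather than something that only becomes a complex after a sign twist. This is where the discussion in \cite[Section~7.2]{Casarin1} on the global coordinate, invoked at the end of Section~\ref{sec:distribuzioni}, is used; everything else reduces to the formal properties of the maps $\Sp$, $E$, $I$, $J$ on fields and on Fock spaces already set up above, plus $A$-freeness of the Fock modules.
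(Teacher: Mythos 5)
Your proof is correct and takes essentially the same route as the paper's: $A$-freeness of $\Fock^\bullet_2$ for a), and the identifications of Lemma \ref{lem:SpEFock} together with the relations $\Sp(d^{(2)})=d^{(1)}$, $E(d^{(2)})=d^{(t,s)}$ and $d^{(t,s)}=I(d^{(t)})+J(d^{(s)})$ for b)--d). Your extra care with the parity/sign bookkeeping needed to see that $I(d^{(t)})$ and $J(d^{(s)})$ assemble into a genuine double complex in part d) is a point the paper leaves implicit, but it does not constitute a different approach.
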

\begin{proof}
Part a) follows from the fact that $\Fock_2$ is a free module over $A$. 	
	
Part b) follows from the fact that, since $a\in A$ acts trivially on $M$, by Lemma \ref{lem:SpEFock} a) we have 
$$M\otimes_A\Fock^\bullet _2\simeq M\otimes_\mC \frac{\Fock^\bullet _2}{a\Fock^\bullet_2}\simeq M\otimes_\mC \Fock^\bullet_1$$
and moreover, by construction, $d^{(1)} = \Sp(d^{(2)})$. 

Part c) follows from the fact that, since the action of $a$ on $M$ is invertible,  by Lemma \ref{lem:SpEFock} c) we have
$$
M \otimes_A \Fock^\bullet_2 = M \otimes_A \Fock^\bullet_2[a^{-1}]=M \otimes_A \Fock^\bullet_{t,s}
$$
and, moreover, by construction, $d^{(t,s)} = E( d^{(2)})$.

Finally, from Lemma \ref{lem:SpEFock} c) we have
$$
(M_t\otimes_Q \Fock_t^\bullet)\otimes_Q (M_s\otimes_Q \Fock_s^\bullet)\simeq M \otimes _Q \Fock_{t,s}^\bullet.
$$
Part d) then follows from the equality $d^{(t,s)}= I(d^{(t)})+J(d^{(s)})$.
\end{proof}

\subsection{Commutation relations}\label{ssec:commutazioni}
For their computation of the semi-infinite cohomology of $\mV$, Frenkel and Ben Zvi (see \cite{FB} Chapter 15) relied on the choice of a clever basis of $\mV\otimes\Fock$. For all $x\in \gog$, they define
$$
\hat x = xt^{-1}\cdot \statovuotoV \otimes \statovuotoF + N(\gra_x),
$$
where $\gra_x\in \gon_+\otimes\gon_+^*$ represents the linear map $\gon_+ \to \gon_+$ obtained as the composition of $\operatorname{ad}_x:\gon_+\lra\gon_+$, the natural projection 
$\pi : \gog\lra \gog/\gob_-$, and  the inverse of the isomorphism $\gon_+ \cong \gog/\gob_-$ induced by $\pi$.
Using the map $\Phi^{(2)}$ from Equation \eqref{eq: Phi 2} we define
$$
\hat x ^{(2)}=\Phi^{(2)}(\hat x).
$$
To compute the semi-infinite cohomology of $\mV_2^{\grl,\mu}$ we will need some information about the  commutation relations among the elements $\hat x^{(2)}$, $\psi^{[2]}$, and
$(\psi^*)^{[2]}$, and the boundary operators. These are easy to compute because all these objects are constructed through the map $\Phi^{(2)}$.
Let us make this remark precise. Given an element $x$ in $\mV\otimes \Fock$, denote 
by $x(z)$ the corresponding field in the vertex superalgebra and by $x^{(2)}:K_2\lra \hU_2\hat \otimes \Cl_2$ the 2-field $\Phi^{(2)}(x)$. 
For any choice of elements $x,y \in \mV\otimes \Fock$, the commutator of the corresponding fields is given by
$$
[x(z),y(w)]=\sum_{n\geq 0}\frac 1{n!}(x_{(n)}y)(w)\partial_w^n\delta(z-w). 
$$ 
We have a similar Operator Product Expansion formula for $2$-fields (see \cite{Casarin1}, Proposition 3.1.3)
$$
[x^{(2)}(f),y^{(2)}(g)]=\sum_{n\geq 0}\frac 1{n!}\left((x^{(2)})_{(n)}(y^{(2)})\right)(g\,\partial^n f ),
$$
where the product $(x^{(2)})_{(n)}(y^{(2))})$ is the product of 2-fields defined in \cite{Casarin1}. However, since $\Phi^{(2)}$ is a map of vertex algebras we get $(x^{(2)})_{(n)}(y^{(2)})=(x_{(n)}y)^{(2)}$. Hence, if we know the commutator of $x(z)$, $y(w)$, we immediately deduce that of $x^{(2)}$ and $y^{(2)}$.

Similar considerations apply when we want to compute $[x^{(2)}(1),y^{(2)}(g)]$ assuming we know the commutator of $x_{(0)}$ and 
$y(w)$. In this case, the usual OPE formula gives $[x_{(0)},y(w)]=(x_{(0)}y)(w)$, while the OPE formula for 2-fields gives 
$$
[x^{(2)}(1),y^{(2)}(g)]=\left((x^{(2)})_{(0)}(y^{(2)})\right)(g ).
$$
Using again the fact that $\Phi^{(2)}$ is a map of vertex algebras, we get
$$
[x^{(2)}(1),y^{(2)}]=\Phi^{(2)}\Bigg( \Big( [x_{(0)},y(w)](\statovuotoV \otimes\statovuotoF)\Big)|_{w=0} \Bigg).
$$
These formulas are enough to determine all commutation relations among the elements $\hat x^{(2)}$, $\psi^{[2]}$,
$(\psi^*)^{[2]}$ and the boundary operators from those obtained by Frenkel and Ben Zvi in \cite[Chapter 15]{FB}, without the need of any further computation. We summarise these results in Proposition \ref{proposition:commutazioni} below, which (in light of the above) follows from Sections 15.2.4 and 15.2.9 of \cite{FB}. In the statement, we denote by $e_{\pr}, h_{\pr}, f_{\pr}$ the $\gos\gol(2)$-triple   
such that $f_{\pr}=\sum_{\gra\text{ simple}}\grl_\gra f_\gra$, $\kappa(f_{pr},e_{\gra})=1$ for all simple root $\gra$ and $h_{pr}\in \got$.

\begin{proposition}\label{proposition:commutazioni}for all $x\in\gog$, $y\in \gob$, $z\in\gon_+$, $w\in\gob_-$, $\psi\in \gon_+$ and $\psi^*\in\gon^*_+$ we have:
\begin{alignat*}{3}
	&a) & (d^{(2)}_{\std})^2                       &=0, &[d^{(2)}_{\std},\chi^{(2)}]_+    &=0, \\
	&b) & (\chi^{(2)})^2                         &=0, &(d^{(2)})^2                    &=0,\\
	&c) &  [\chi^{(2)},\psi^{[2]}]_+     &=\langle\psi_{\pr}^* ,\psi\rangle\, \uno ,
	          &\qquad[\chi^{(2)},(\psi^*)^{[2]}]_+&=0,  \\
	&d) &[\chi^{(2)},\hat z^{(2)}]   &= 0, &[\chi^{(2)},\hat w^{(2)}]&=\sum_{\gra\, \in \Phi^+}\kappa([f_{\pr},z],e_\gra) \psi_\gra ^*,\\
	&e) &\; [d_{\std}^{(2)},\psi^{[2]}]_+         &=\hat\psi^{(2)},  &[d_{\std}^{(2)},(\psi^*)^{[2]}]_+ 
	&=-\frac 12 \Phi\big(1_{\hU_2}\otimes N(\psi^*\circ B)\big), \\
	&f) & [d_{\std}^{(2)},\hat y^{(2)}]_+         &=0
\end{alignat*}
where in the second formula of e) the element $\psi^*\circ B\in \gon_+^*\otimes \gon_+^*$ represents the composition of the bracket with the map $\psi^*$.
Moreover, if we choose a basis $J^a$ as at the beginnin of Section \ref{ssec:comologiasemi}, for all $\grg\in\Phi^+$ we have
\begin{align*}
[d_{\std}^{(2)},\hat f_\grg^{(2)}]_+= &\sum_{\gra \in \Phi^+, a\in \Phi^-\sqcup \grG} c^{\gra,-\grg}_{a} {(\hat J^{a})^{(2)}}_{(-1)}(\psi_\gra^*)^{[2]} \\ &-\frac 12 \, \kappa(e_{-\grg},f_\grg) \,\partial (\psi^*_{-\grg})^{[2]} -\sum_{\gra,\grb \in \Phi^+,\, a\in\Phi\sqcup \grG} c^{\gra,a}_{\grb} c^{\grb,-\grg}_{a} \partial (\psi^*_\gra)^{[2]}
\end{align*}
\end{proposition}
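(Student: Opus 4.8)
The strategy is to exploit the fact, emphasised throughout Section~\ref{sect:Vertex algebras}, that every object appearing in the statement --- the fields $\hat x^{(2)}$, $\psi^{[2]}$, $(\psi^*)^{[2]}$, and the boundary operators $d_{\std}^{(2)}$, $\chi^{(2)}$, $d^{(2)}$ --- is the image under the vertex algebra morphism $\Phi^{(2)}:\mV\otimes_\mC\Fock^\bullet\to\calV^{(2)}$ of a correspondingly named element of $\mV\otimes_\mC\Fock^\bullet$ (namely $\hat x$, $\psi t^{-1}\statovuotoF$, $\psi^* t^{-1}\statovuotoF$, $q$, $N(\psi_{\pr}^*)$, $q+N(\psi^*_{\pr})$). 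Since $\Phi^{(2)}$ preserves $n$-products and hence, by the OPE formulas recalled in Section~\ref{ssec:commutazioni}, preserves \emph{all} commutators of the associated $2$-fields evaluated at $1$ or at a function $g$, every identity in a)--f) is the image under $\Phi^{(2)}$ of the identical relation between the analogous $1$-singularity objects in $\mV\otimes\Fock$, and those relations are precisely Sections~15.2.4 and~15.2.9 of \cite{FB}. So the plan is: first state the translation principle as a short preliminary observation (if $x_{(n)}y = z$ in $\mV\otimes\Fock$ then $(x^{(2)})_{(n)}(y^{(2)}) = z^{(2)}$, and the OPE-for-$2$-fields identities of Section~\ref{ssec:commutazioni} convert this into the claimed bracket formulas, with parities tracked by the superalgebra sign convention); then go through a)--f) one line at a time, in each case quoting the exact place in \cite[Chapter~15]{FB} where the same formula is proved for $d_{\std}^{(1)},\chi^{(1)},\hat x^{(1)},\psi^{[1]},(\psi^*)^{[1]}$, and applying $\Phi^{(2)}$.

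A little care is needed in two places. First, the relations a)--f) are a mixture of ``evaluated at $1$'' statements (the operators $d_{\std}^{(2)}$, $\chi^{(2)}$, $\hat x^{(2)}$ are elements of the algebra $\hU_2\hat\otimes_A\Cl_2$, i.e. the $2$-fields evaluated at the function $1$) and statements that involve $\partial$ (e.g. the terms $\partial(\psi^*_{-\grg})^{[2]}$ in the last displayed formula). For the former one uses the identity $[x^{(2)}(1),y^{(2)}] = \Phi^{(2)}\big(([x_{(0)},y(w)](\statovuotoV\otimes\statovuotoF))|_{w=0}\big)$ recorded just before Proposition~\ref{proposition:commutazioni}; for the latter one uses that $\Phi^{(2)}$ intertwines the translation operator $T$ on $\mV\otimes\Fock$ with $\partial$ on $2$-fields, so that $\partial(\psi^*)^{[2]} = \Phi^{(2)}(T(\psi^*t^{-1}\statovuotoF))$ and similarly for the other derivative terms. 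Second, the element $q = M(I) - \tfrac12\statovuotoV\otimes N(B)$ and the modified differential $d^{(2)} = d_{\std}^{(2)} + \chi^{(2)}$ are exactly the elements Frenkel--Ben Zvi use (with $\chi^{(2)}$ the image of $N(\psi^*_{\pr})$), so that $(d_{\std}^{(2)})^2 = 0$, $[d_{\std}^{(2)},\chi^{(2)}]_+ = 0$ and $(\chi^{(2)})^2 = 0$ --- hence $(d^{(2)})^2 = 0$, which is statement b) and the fact promised in Section~\ref{ssec:comologiasemi} that $d^{(2)}$ can be used to define cohomology --- follow from the corresponding $1$-singularity facts, which are the content of the computation in \cite[\S15.2]{FB} (the vanishing $(d_{\std}^{(1)})^2=0$ itself rests on the critical level condition, which is built into $\mV$).

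I would then treat c)--f) in the same spirit: for c), $[\chi^{(2)},\psi^{[2]}]_+$ is the image of $(N(\psi^*_{\pr}))_{(0)}(\psi t^{-1}\statovuotoF)$, which the Fock-space OPE computes to be $\langle\psi^*_{\pr},\psi\rangle\statovuoto$, and $[\chi^{(2)},(\psi^*)^{[2]}]_+$ is the image of a product of two ``creation'' fermions, which vanishes; for d), one uses the explicit form of $\hat z$ and the bracket of $N(\psi^*_{\pr})$ with $N(\gra_z)$, reproducing \cite[\S15.2.9]{FB}; for e) and f) one quotes the computations of $[d_{\std}^{(1)},\psi^{[1]}]_+$, $[d_{\std}^{(1)},(\psi^*)^{[1]}]_+$ and $[d_{\std}^{(1)},\hat y^{(1)}]_+$ from \cite[\S15.2.4]{FB} verbatim and applies $\Phi^{(2)}$; for the final displayed formula for $[d_{\std}^{(2)},\hat f_\grg^{(2)}]_+$, which is the most involved, one again takes the corresponding identity in $\mV\otimes\Fock$ (this is the ``$[d,\hat x]$ for general $x$'' computation of \cite[\S15.2.4]{FB}, specialised to $x=f_\grg$) and pushes it through $\Phi^{(2)}$, noting that the normally-ordered products $(\hat J^a)_{(-1)}(\psi^*_\gra t^{-1}\statovuotoF)$ map to $(\hat J^a)^{(2)}_{(-1)}(\psi^*_\gra)^{[2]}$ since $\Phi^{(2)}$ preserves the $(-1)$-product.

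\textbf{Main obstacle.} None of the steps requires a genuinely new computation; the work is entirely in setting up the dictionary correctly and, above all, in \emph{keeping track of signs}. The expected main difficulty is therefore bookkeeping: making sure that the super-commutators $[\,\cdot\,,\,\cdot\,]_+$ vs. $[\,\cdot\,,\,\cdot\,]$ in a)--f) are assigned the correct parity (even fields $\hat x^{(2)}$ vs. odd fields $\psi^{[2]}$, $(\psi^*)^{[2]}$, with $d^{(2)}_{\std}$ and $\chi^{(2)}$ odd), that the sign conventions in Casarin's OPE formula for $2$-fields match those of \cite{FB} for ordinary fields, and that the coefficients $c^{\gra,\grb}_e$, the Killing-form factors $\kappa(e_{-\grg},f_\grg)$, and the factor $-\tfrac12$ coming from the central term $(u_x)_{(1)}(u_y) = -\tfrac12\kappa(x,y)\statovuoto_U$ all come out exactly as in \cite[Chapter~15]{FB}. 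Since $\Phi^{(2)}$ is a morphism of vertex \emph{super}algebras and the construction of all objects involved is literally parallel to the one-singularity case, once the sign conventions are pinned down the proof is a reference to \cite{FB} plus an application of $\Phi^{(2)}$.
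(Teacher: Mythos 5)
Your proposal is correct and is essentially identical to the paper's argument: the paper proves this proposition precisely by the translation principle you describe (the OPE formulas for $2$-fields plus the fact that $\Phi^{(2)}$ is a morphism of vertex superalgebras, set out just before the statement), and then simply quotes Sections 15.2.4 and 15.2.9 of \cite{FB} for the one-singularity relations. Your additional remarks on the ``evaluated at $1$'' versus $\partial$-terms dichotomy and on sign bookkeeping correctly identify where the only care is needed.
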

By specialisation and localization we obtain that similar formulas hold also in the case of our various other superalgebras $\hU_t\hat \otimes \Cl_t$, $\hU_{t,s} \otimes \Cl_{t,s}$, \ldots

\section{The semi-infinite cohomology of $\mV_2^{\grl,\mu}$}\label{sec:calcolocomologia}

In this section we compute the semi-infinite cohomology of $\mV^{\grl,\mu}_2$. We denote by $C^\bullet_2=C^\bullet_2(\grl,\mu)$ the complex $\mV^{\grl,\mu}_2\otimes _A \Fock^\bullet_2$ and similarly we introduce the complexes $C^\bullet_t=C^\bullet_t(\grl)=\mV^{\grl}_t\otimes _Q \Fock^\bullet_t$ and 
$C^\bullet_s=C^\bullet_s(\mu)=\mV^{\mu}_s\otimes _Q \Fock^\bullet_s$. We further introduce the complexes $C^\bullet_1(\nu)=\mV^{\nu}_1\otimes _\mC \Fock^\bullet_1$ and $C^\bullet_1(\grl,\mu)=\mW_1^{\grl,\mu}\otimes_\mC \Fock^\bullet_1$. Hence, we have $C^\bullet_1(\grl,\mu)\simeq\oplus C^\bullet_1(\nu)$, where the sum ranges over the irreducible factors of $V^\grl\otimes V^\mu$ counted with multiplicity. 

We denote by $\Op_1$ the indscheme of opers on the punctured disc and, for every integral dominant weight $\nu$, we write $\Op_1^\nu$ for the associated connected component of the space of unramified opers without monodromy, equipped with its reduced structure (see, for example, \cite{FG6} for a more complete definition). We also denote by $v_\nu$ a highest weight vector in the $\gog$-module $V^\nu$. 
Feigin and Frenkel \cite{FF92} constructed an isomorphism $\calF_1:\Funct(\Op_1)\lra Z_1$ between the space of functions over $\Op_1$ and the center $Z_1$ of $\hU_1$. Recall the following result, which combines Theorem 1, Theorem 2 and the proof of Proposition 1 in \cite{FG6}.

\begin{theorem}[Frenkel and Gaitsgory \cite{FG6}] \label{thm:FG6teo} 
The action of $Z_1$ on $\mV_1^\nu$ and the Feigin-Frenkel isomorphism induce an isomorphism
$$\calG_1: \Funct(\Op_1^\nu)\lra \End_{\hg_1}(\mV_1^\nu).$$
Moreover, the element $v_\nu\otimes \statovuotoF$ is a cocycle in $C^\bullet_1(\nu)$ and the map $z\mapsto [z\cdot v_\nu\otimes \statovuotoF]$ from $Z_1$ to $\semiinfz(\mV^\nu_1)$ induces isomorphisms of $Z_1$-modules
$$ \Funct(\Op_1^\nu)\simeq \End_{\hg_1}(\mV_1^\nu)\simeq \semiinfz(\mV_1^\nu). $$
Finally, $\semiinf^n(\mV_1^\nu)$ vanishes for all $n\neq 0$. 
\end{theorem}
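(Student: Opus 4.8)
This statement is \cite[Theorems~1 and 2, and the proof of Proposition~1]{FG6}, resting on \cite{FF92}, and combined with the BRST computation of \cite[Chapter~15]{FB}. The plan is to split it into three essentially independent parts and treat each by the methods of those references. For the isomorphism $\calG_1$: since $\mV_1^\nu$ is generated over $\hgog_1$ by the highest weight vector $v_\nu\otimes 1$, an endomorphism is determined by the image of this vector, and that image must be a vector of weight $\nu$ annihilated by $\gon_+\otimes 1$ and by $\gog\otimes t\mC[t]$, i.e.\ a ``singular vector of weight $\nu$''. One shows, via the Feigin--Frenkel isomorphism $\calF_1\colon\Funct(\Op_1)\xrightarrow{\sim}Z_1$ together with the Wakimoto/Miura realisation of $\mV_1^\nu$, that every such vector is $z\cdot(v_\nu\otimes 1)$ for a unique class of $z$ in the quotient of $Z_1$ by the ideal of functions vanishing on the component $\Op_1^\nu$ (the opers that are unramified, have no monodromy, and have residue prescribed by $\nu$). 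This gives $\End_{\hgog_1}(\mV_1^\nu)\cong\Funct(\Op_1^\nu)$ as $Z_1$-algebras.

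Next, for the cocycle property, I would compute $d^{(1)}(v_\nu\otimes\statovuotoF)$ directly from the one-point specialisation of the formulas in Proposition~\ref{proposition:commutazioni} (equivalently, from \cite[\S15.2]{FB}). Writing $d^{(1)}=d^{(1)}_{\std}+\chi^{(1)}$, every term of $d^{(1)}_{\std}$ involves either a nonnegative mode of $\gog$ acting on the first factor, or a positive mode of a fermion acting on $\statovuotoF$, or a $\gon_+$-current acting on $v_\nu$; since $\gon_+\cdot v_\nu=0$ in $V^\nu$ and $v_\nu\otimes\statovuotoF$ is a lowest-energy vector, all these kill it. The deformation $\chi^{(1)}=1\otimes\psi^*_{\pr}$ produces $\psi^*_{\pr}t^{-1}\cdot\statovuotoF$, but its coefficient again contracts against the (vanishing) $\gon_+$-action on $v_\nu$; indeed this cancellation is precisely the reason for deforming $d_{\std}$ by $\chi$. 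Hence $v_\nu\otimes\statovuotoF$ is a cocycle, and $z\mapsto[z\cdot v_\nu\otimes\statovuotoF]$ defines a $Z_1$-linear map $Z_1\to\semiinf^0(\mV_1^\nu)$; because $z\cdot(v_\nu\otimes 1)$ is the image of $v_\nu\otimes 1$ under the endomorphism $\calG_1(\calF_1^{-1}(z))$, this map factors through $\End_{\hgog_1}(\mV_1^\nu)\cong\Funct(\Op_1^\nu)$.

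The remaining assertions, that $\semiinf^n(\mV_1^\nu)=0$ for $n\neq 0$ and that the above map is an isomorphism, I would prove by a filtration/spectral-sequence argument. Equip $C^\bullet_1(\nu)=\mV_1^\nu\otimes_\mC\Fock^\bullet_1$ with the increasing filtration induced by the energy grading, so each filtered piece is finite dimensional; the induced differential on the associated graded is a Chevalley--Eilenberg-type differential computing a semi-infinite Lie-algebra cohomology of $\gon_+((t))$ with coefficients in the (polynomial) associated graded of $\mV_1^\nu$. One computes this graded cohomology to be $\Funct(\Op_1^\nu)$ concentrated in degree $0$: this is the classical Drinfeld--Sokolov Hamiltonian-reduction computation, the point being that the relevant $\gon_+$-cohomology is concentrated in a single degree and yields functions on the oper space with the prescribed residue. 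Since the associated graded is concentrated in degree $0$, the spectral sequence degenerates: $\semiinf^n(\mV_1^\nu)=0$ for $n\neq 0$, and $\semiinf^0(\mV_1^\nu)$ carries a filtration whose graded is $\Funct(\Op_1^\nu)$. Comparing with the $Z_1$-equivariant map constructed above (which is surjective onto the filtration quotients) then forces $\semiinf^0(\mV_1^\nu)\cong\End_{\hgog_1}(\mV_1^\nu)\cong\Funct(\Op_1^\nu)$ as $Z_1$-modules.

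The main obstacle is the computation of the associated-graded cohomology and its identification with $\Funct(\Op_1^\nu)$: this is the technical core of \cite{FF92} and \cite[Theorem~2]{FG6}, and it requires the Wakimoto/Miura realisation, the concentration of the semi-infinite cohomology of $\gon_+((t))$ in one degree, and the residue/monodromy bookkeeping needed to pass from $\Op_1$ to the component $\Op_1^\nu$. Everything else, namely the cocycle check, the $Z_1$-equivariance, and the degeneration of the spectral sequence, is formal once this input is in place.
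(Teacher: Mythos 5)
The paper offers no proof of this statement: it is recalled verbatim from \cite{FG6} (Theorems 1 and 2 together with the proof of Proposition 1 there), so there is no internal argument to compare yours against, and your citation-plus-sketch is an acceptable substitute. As a summary of the cited proofs it is essentially accurate: the identification of $\End_{\hgog_1}(\mV_1^\nu)$ with singular vectors of weight $\nu$ and hence with $\Funct(\Op_1^\nu)$, the cocycle check, and the concentration of the cohomology in degree zero are indeed the three ingredients. One structural remark: the way the degree-zero concentration is actually organized in \cite{FG6} and \cite[Ch.~15]{FB} --- and the way the present paper reproduces it in Sections 3--4 for the two-point analogue --- is not an energy filtration but the tensor decomposition $C_1^\bullet(\nu)\simeq E_1^\bullet\otimes D_1^\bullet(\nu)$ together with the bigrading of $D_1^{\bullet,\bullet}(\nu)$ by the principal gradation: the rows are resolved by the differential $\chi^{(1)}$ (Lemma \ref{lem:righeD1}), $E_1^\bullet$ has one-dimensional cohomology (Lemma \ref{lem:coomD}), and explicit degree-zero representatives are produced by the staircase construction. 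Your energy-filtration/Drinfeld--Sokolov phrasing is a legitimate alternative in spirit, but for general $\nu$ the identification of the associated graded with $\Funct(\Op_1^\nu)$ is exactly the content you defer to the references, so the bigraded route is the one that actually closes the argument; since the surrounding paper leans on that same bigraded mechanism, it is worth being aware that this, rather than an energy spectral sequence, is the engine. One small imprecision in the cocycle check: $\chi^{(1)}=1\otimes\psi^*_{\pr}$ is the mode $\psi^*_{\pr}\otimes t^0$, a nonnegative fermionic mode which annihilates $\statovuotoF$ outright; it does not create $\psi^*_{\pr}t^{-1}\cdot\statovuotoF$ with a coefficient that contracts against the (vanishing) $\gon_+$-action on $v_\nu$. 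The conclusion that $v_\nu\otimes\statovuotoF$ is a cocycle is unaffected.
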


The result of Frenkel and Gaitsgory generalises easily to the case of the modules $\mV^{\grl}_t$ and $\mV^{\mu}_s$.

By Lemma \ref{eq:SEWeyl} and Lemma \ref{lem:SpEFock}, as in the proof of Lemma \ref{lem:SEcomologia}, by the compatibility of boundary operators we get homomorphisms of complexes $\Sp:C^\bullet_2\ra C^\bullet_1(\grl,\mu)$ and $E:C^\bullet_2\ra
C^\bullet_t(\grl)\otimes_Q C^\bullet_s(\mu)$. These induce isomorphisms
\begin{equation}\label{eq:localizzazioneespecializzazionecomplesso}
C^\bullet_2[a^{-1}]\simeq  C^\bullet_t(\grl)\otimes_Q C^\bullet_s(\mu)\qquad\text{and}\qquad \frac {C_2^\bullet}{aC_2^\bullet}\simeq C^\bullet_1(\grl,\mu).
\end{equation}
From these isomorphisms and Theorem \ref{thm:FG6teo} it follows easily that $\semiinf^n(\Weyl)$ is zero for $n\neq 0,1$, and we could also get information on the cohomology in degrees zero and one.

However, it is easier to compute these cohomology groups directly by adapting the strategy employed by Frenkel and Ben Zvi in \cite[Chapter 15]{FB}. In order to do this, we now introduce certain subcomplexes of $C_2^\bullet$.  We denote by $1_{\mV^{0,0}_2}$ the element $1 \in A \otimes_\mC \mC \otimes_\mC \mC \subset \mV^{0,0}_2$. 

\begin{comment}
The next Lemma follows from these isomorphisms and Theorem \ref{thm:FG6teo}. \footnote{In realt\`a questo lemma non lo usiamo mai.}

\begin{lemma}\label{lemma:1}
For all $n\neq 0,1$ we have $H_{si}^n(\mV^{\grl,\mu}_2)=0$. Moreover we have an exact sequence
$$ \xymatrix{ 0 \ar[r] &  H_{si}^0(\mV^{\grl,\mu}_2) \ar[r]^{a\cdot } & H_{si}^0(\mV^{\grl,\mu}_2)  \ar[r] & H^{0}_{si}(\mW^{\grl,\mu}_1) \ar[r] & \\
\ar[r] & H_{si}^1(\mV^{\grl,\mu}_2) \ar[r]^{a\cdot } & H_{si}^1(\mV^{\grl,\mu}_2)\ar[r] & 0} $$
\end{lemma}
\begin{proof}
The results follows immediately from the exact sequence 
\end{proof}

To compute the cohomology of $\mV^{\grl,\mu}_2$ we follow step by step the strategy used by Frenkel and Ben Zvi in \cite{FB}, Chapter 15. 
\end{comment}

\begin{definition}
We denote by $E^\bullet_2$ the subcomplex of $C^\bullet_2(0,0)$ spanned by elements of the form 
\begin{equation}\label{eq:defD}
\hat x_1^{(2)}(g_1)\cdots \hat x_a^{(2)}(g_a)\cdot  1_{\mV^{0,0}_2}
\otimes
\psi_1^{(2)}(\ell_1)\cdots \psi_b^{(2)}(\ell_b) \cdot \statovuotoFd
\end{equation}
where $x_i,\psi_i\in\gon_+$ and $g_1,\dots,g_a, \ell_1, \dots\ell_b\in K_2$. By the commutation relations of Section \ref{ssec:commutazioni} we see that $E^\bullet_2$  is a subcomplex of $C^\bullet_2(0,0)$.
%We denote by $\calB_D$ the elements of the basis $\calB$ introduced in THE PREVIOUS SECTION of the form given by formula \eqref{eq:defD}. This is a basis of the complex $D^\bullet_2$.
\end{definition}

We define also analogous complexes $E^\bullet_t$, $E^\bullet_s$ and $E^\bullet_1$. These complexes were denoted by $C'$ in \cite{FB} and by $C_0$ in \cite{FG6}. By construction, these subcomplexes are compatible with specialisation and localization, and 
there are isomorphisms $E^\bullet_2/aE_2^\bullet\simeq E^\bullet _1$ and $E^\bullet_2[a^{-1}]\simeq E^\bullet_t\otimes _Q E^\bullet_s$.

\begin{definition}
	We denote by $D^\bullet_2=D^\bullet_2(\grl,\mu)$ the subcomplex of $C^\bullet_2(\grl,\mu)$ spanned by elements of the form 
	\begin{equation}\label{eq:defE}
	\hat y_1^{(2)}(h_1)\cdots \hat y_c^{(2)}(h_c)\cdot  w
	\otimes
	(\psi^*_1)^{(2)}(k_1)\cdots (\psi_d^*)^{(2)}(k_d) \cdot \statovuotoFd
%	A_1\cdots A_c w
%	\otimes
%	(\psi^*)^{(2)}(\ell_1)\cdots (\psi^*)^{(2)}(\ell_d) 1_{\Lambda_2^\bullet}
	\end{equation}
	where $w\in V^\grl\otimes V^\mu$, $y_i\in \gob_-=\gon_-+\got$, $\psi_i^*\in \gon_+^*$ and $h_1,\dots,h_c, k_1, \dots, k_d\in K_2$. By the commutation relations of Section \ref{ssec:commutazioni} we see that $E^\bullet_2$ is a subcomplex of  $C^\bullet_2(\grl,\mu)$. 

%	We denote by $\calB_D$ the elements of the basis $\calB$ introduced in THE PREVIOUS SECTION of the form given by formula \eqref{eq:defD}. This is a basis of the complex $D^\bullet_2$.
\end{definition}

We define also analogous complexes $D^\bullet_t(\grl)$, $D^\bullet_s(\mu)$ and $D^\bullet_1(\nu)$. These complexes were denoted by $C_0$ in \cite{FB} and by $C'$ in \cite{FG6}.
Finally, we denote by $D^\bullet_1(\grl,\mu)$ the analogous subcomplex of $C^\bullet_1(\grl,\mu)$. By construction, these subcomplexes are compatible with specialisation and localization, and there are isomorphisms $D^\bullet_2/aD_2^\bullet\simeq D^\bullet _1(\grl,\mu)$ and $D^\bullet_2[a^{-1}]\simeq D^\bullet_t(\grl)\otimes _Q D^\bullet_s(\mu)$.

\medskip

There is an isomorphism of complexes
$E_2^\bullet\otimes D_2^\bullet\lra C_2^\bullet $ defined by
$$
\big(\underline x \cdot 1_{\mV^{0,0}_2} \otimes \underline \psi \cdot \statovuotoFd \big)
\otimes 
\big(\underline y \cdot w \otimes \underline \psi^* \cdot \statovuotoFd \big)\longmapsto 
\underline x \cdot \underline y \cdot w\otimes 
\underline \psi\cdot \underline \psi^* \cdot \statovuotoFd,
$$
where  $\underline x=\hat x_1^{(2)}(g_1)\cdots \hat x_a^{(2)}(g_a)$ and
$\underline \psi=\psi_1^{(2)}(\ell_1)\cdots \psi_b^{(2)}(\ell_b)$ are as in Equation
\eqref{eq:defD}, $\underline y=\hat y_1^{(2)}(h_1)\cdots \hat y_{c}^{(2)}(h_c)$ and $\underline \psi^*=
(\psi^*)^{(2)}(k_1)\cdots (\psi^*)^{(2)}(k_d)$ are as in Equation \eqref{eq:defE}, and $w$ is an element of $V^\grl\otimes V^\mu$.

\begin{comment}
\begin{lemma}\label{lem:specializzazioneelocalizzazionediDE}
The maps $\Sp$ and $E$ induces isomorphisms
\begin{enumerate}[\indent a)]
	\item between $D^\bullet _2/aD^\bullet _2$ and $D^\bullet _1$ and between 
	    $E^\bullet _2/aE^\bullet _2$ and $E^\bullet _1$
	\item between $D^\bullet _2[1/a]$ and $D^\bullet _t\otimes_Q D^\bullet _s$ and between 
		$E^\bullet _2[1/a]$ and $E^\bullet _t\otimes_Q E^\bullet _s$.
\end{enumerate}
\end{lemma}
\end{comment}

We now compute the cohomology of the complex $E^\bullet_2$. We will need the following result by Frenkel and Ben Zvi.

\begin{lemma}[{\cite[Section 15.2.6]{FB}}]\label{lem:coomD}
$H^n (E_1^\bullet)= 0$ for $n\neq 0$ and $\semiinfz(E_1^\bullet)=\mC [\statovuotoV\otimes \statovuotoF]
$.
\end{lemma}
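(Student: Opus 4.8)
The plan is to reduce the statement to the case treated by Frenkel and Ben Zvi. In the presence of a single singular point, all the constructions of Section \ref{sect:Vertex algebras} specialise to those of \cite[Chapter 15]{FB}: the ring $\mC[t^{\pm 1}]$ plays the role of $K_2$, the completed Clifford algebra $\Cl_1$ is the algebra $\Cl_U$ of \cite{FB}, and the currents $x^{(1)},\hat x^{(1)},\psi^{[1]},(\psi^*)^{[1]}$ and the operators $d^{(1)}_{\std},\chi^{(1)},d^{(1)}$ coincide with the homonymous objects there. Under this dictionary $E^\bullet_1$ is precisely the subcomplex denoted $C'$ in \cite[Section 15.2]{FB}, so the statement is \cite[Section 15.2.6]{FB}; below I indicate the argument.

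First I would describe the underlying graded vector space of $E^\bullet_1$. By parts d) and f) of Proposition \ref{proposition:commutazioni} (in the one-point case) the operators $\hat x^{(1)}(g)$ with $x\in\gon_+$, $g\in\mC[t^{\pm 1}]$, commute with $d^{(1)}$ and preserve $E^\bullet_1$, and by the operator product expansion they close, under normally ordered products acting on the vacuum, into a copy of the pro-nilpotent Lie algebra $\mathfrak L=\gon_+\otimes t^{-1}\mC[t^{-1}]$ (there is no central term, since $\kappa$ vanishes on $\gon_+$). A Poincar\'e--Birkhoff--Witt argument as in Section \ref{sec:basi} then identifies $E^0_1$ with $U(\mathfrak L)\cdot(\statovuotoV\otimes\statovuotoF)\cong U(\mathfrak L)$ and, after adjoining the fermionic creation modes $\psi_\gra t^{-k}$ ($\gra\in\Phi^+$, $k\geq 1$), whose span $\Pi$ is isomorphic to $\mathfrak L$ as a graded vector space, identifies $E^{-p}_1$ with $U(\mathfrak L)\otimes\Lambda^p\Pi$.

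Next I would pin down the differential. Parts c) and e) of Proposition \ref{proposition:commutazioni} give $[d^{(1)}_{\std},\psi^{[1]}]_+=\hat\psi^{(1)}$ and $[\chi^{(1)},\psi^{[1]}]_+=\langle\psi^*_{\pr},\psi\rangle\uno$, while $d^{(1)}$ kills $\statovuotoV\otimes\statovuotoF$. Expanding $q=M(I)-\tfrac12\statovuotoV\otimes N(B)$, one reads off that, under the identification above, $d^{(1)}=d^{(1)}_{\std}+\chi^{(1)}$ acts by the graded Leibniz rule, sending a fermion $\psi_\gra t^{-k}$ to the current $e_\gra t^{-k}\in\mathfrak L$ (from $M(I)$), plus a $\gon_+$-bracket of fermions (from $N(B)$), plus the scalar $\langle\psi^*_{\pr},e_\gra\rangle\delta_{k,1}$ (from $\chi^{(1)}$); these are exactly the three terms of the differential of the standard Chevalley--Eilenberg free resolution
\[
\cdots\lra U(\mathfrak L)\otimes\Lambda^2\mathfrak L\lra U(\mathfrak L)\otimes\mathfrak L\lra U(\mathfrak L)
\]
of the one-dimensional $\mathfrak L$-module $\mC_\chi$, where the character $\chi\colon\mathfrak L\to\mC$ sends $e_\gra t^{-1}$ to $\langle\psi^*_{\pr},e_\gra\rangle$ for $\gra$ simple and every other basis mode to $0$ (this is a Lie algebra character because $[\mathfrak L,\mathfrak L]$ is spanned by the modes $e_\gra t^{-k}$ with $k\geq 2$). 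Since $\mathfrak L$ is pro-nilpotent the displayed complex is a resolution, whence $H^n(E^\bullet_1)=0$ for $n\neq 0$ and $H^0(E^\bullet_1)\cong\mC_\chi$ is one-dimensional, spanned by the class of $1\in U(\mathfrak L)$, i.e. of $\statovuotoV\otimes\statovuotoF$.

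The step I expect to be delicate is the identification of the differential: one must check that the fermionic normal-ordering corrections $N(\gra_x)$ hidden in $\hat x^{(1)}$ are compatible with the Poincar\'e--Birkhoff--Witt identification, and that the signs and the term $\tfrac12\statovuotoV\otimes N(B)$ in $q$ reproduce the Chevalley--Eilenberg differential exactly rather than merely up to homotopy; this is the content of \cite[Sections 15.2.4--15.2.6]{FB} (alternatively, one filters $E^\bullet_1$ so that the associated graded differential is a Koszul differential of a regular sequence and argues by the resulting spectral sequence). In the case $\gog=\gos\gol(2)$ there is nothing to check: $\gra_x=0$, $\mathfrak L$ is abelian, and $d^{(1)}$ is visibly the Koszul differential attached to the regular sequence $(e t^{-1}-1,\,e t^{-2},\,e t^{-3},\dots)$ in $\mC[e t^{-1},e t^{-2},\dots]$, whose homology is $\mC$ in degree $0$.
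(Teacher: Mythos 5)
Your argument is correct and coincides with the paper's: the paper gives no proof of this lemma beyond the citation to \cite[Section 15.2.6]{FB}, and your identification of $E_1^\bullet$ with the (character-twisted) Chevalley--Eilenberg free resolution of $\mC_\chi$ over $U(\gon_+\otimes t^{-1}\mC[t^{-1}])$ is precisely the argument given there. The only blemishes are cosmetic --- $[\mathfrak L,\mathfrak L]$ is merely \emph{contained in}, not equal to, the span of the modes $e_\gra t^{-k}$ with $k\geq 2$ (and no nilpotency is actually needed for the Chevalley--Eilenberg complex to be a resolution) --- and you correctly isolate the one point that genuinely requires verification against \cite{FB}, namely the compatibility of the corrections $N(\gra_x)$ with the Poincar\'e--Birkhoff--Witt identification.
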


This result generalizes easily to the case of $E_t^\bullet$ and $E_s^\bullet$. 
Localizing and specializing, we deduce the following lemma. 

\begin{lemma}\label{lem:coomE2}
	$H^n (E_2^\bullet)= 0$ for $n\neq 0$ and $H^0(E_2^\bullet)=A [1_{\mV^{0,0}_2} \otimes
	\statovuotoFd]
	$.
\end{lemma}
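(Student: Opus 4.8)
The plan is to deduce this statement from Lemma \ref{lem:coomD} (and its variants for $E^\bullet_t$ and $E^\bullet_s$) via the standard flatness/localization/specialisation argument, exactly as in the proof of Lemma \ref{lem:SEcomologia}. First I would note that $E^\bullet_2$ is a complex of free (hence flat) $A$-modules, so tensoring the short exact sequence $0 \to E^\bullet_2 \xrightarrow{a} E^\bullet_2 \to E^\bullet_2/aE^\bullet_2 \to 0$ gives a long exact sequence in cohomology relating $H^n(E^\bullet_2)$, multiplication by $a$, and $H^n(E^\bullet_2/aE^\bullet_2) \simeq H^n(E^\bullet_1)$. Second, I would use the localization isomorphism $E^\bullet_2[a^{-1}] \simeq E^\bullet_t \otimes_Q E^\bullet_s$ together with part d) of Lemma \ref{lem:SEcomologia} (the Künneth argument over the field $Q$) and the generalisation of Lemma \ref{lem:coomD} to $E^\bullet_t$, $E^\bullet_s$: since each of these has cohomology only in degree $0$, so does their tensor product, and $H^0(E^\bullet_t \otimes_Q E^\bullet_s) \simeq Q \cdot [\statovuotoV\otimes\statovuotoF] \otimes Q\cdot[\statovuotoV\otimes\statovuotoF]$, one-dimensional over $Q$. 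Hence $H^n(E^\bullet_2)[a^{-1}] = H^n(E^\bullet_2[a^{-1}]) = 0$ for $n \neq 0$.

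Now the vanishing for $n \neq 0$: from the long exact sequence, for $n \neq 0, 1$ we have $H^{n}(E^\bullet_1) = 0$ by Lemma \ref{lem:coomD}, so multiplication by $a$ is an isomorphism on $H^n(E^\bullet_2)$; but $E^\bullet_2$ is built out of the $a$-adically complete module structure (it sits inside $C^\bullet_2 = \mV^{0,0}_2 \otimes_A \Fock^\bullet_2$ where things are complete in the relevant topology), so an $A$-module on which $a$ acts invertibly and which is also "$a$-adically separated" in the appropriate sense must be $0$ — more cleanly, one invokes $H^n(E^\bullet_2)[a^{-1}] = 0$ together with injectivity of $H^n(E^\bullet_2) \to H^n(E^\bullet_2)[a^{-1}]$, which holds because $E^\bullet_2$ is $A$-flat and the cohomology is computed termwise. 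Actually the crispest route is Lemma \ref{lem:isoMN}: I would set up the comparison map $A \to H^0(E^\bullet_2)$, $1 \mapsto [1_{\mV^{0,0}_2}\otimes\statovuotoFd]$, check it is an isomorphism after inverting $a$ (by the Künneth computation above, identifying the generator with $[\statovuotoV\otimes\statovuotoF]\otimes[\statovuotoV\otimes\statovuotoF]$) and injective modulo $a$ (by Lemma \ref{lem:coomD}, which identifies $H^0(E^\bullet_1) = \mC[\statovuotoV\otimes\statovuotoF]$, the reduction of the chosen generator), and conclude by the last sentence of Lemma \ref{lem:isoMN}. For $H^n$, $n \neq 0$, one argues similarly: $H^n(E^\bullet_2)$ is flat (as a submodule/subquotient — here one must be slightly careful, but $H^n = 0$ after localizing and after reducing mod $a$, and flatness of the complex gives that $H^n(E^\bullet_2)/aH^n(E^\bullet_2)$ injects into $H^n(E^\bullet_1) = 0$ for $n \neq 0,1$ while for $n=1$ one uses the localization vanishing plus the long exact sequence), forcing $H^n(E^\bullet_2) = 0$.

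The main obstacle I anticipate is the bookkeeping at $n = 1$: the naive long exact sequence argument degenerates there because $H^1(E^\bullet_1)$ is not obviously zero from Lemma \ref{lem:coomD} alone if one is not careful — but in fact Lemma \ref{lem:coomD} does assert $H^n(E^\bullet_1) = 0$ for all $n \neq 0$, including $n=1$, so the long exact sequence immediately gives that $a$ acts invertibly on $H^1(E^\bullet_2)$; combined with $H^1(E^\bullet_2)[a^{-1}] = H^1(E^\bullet_2[a^{-1}]) = 0$ from the Künneth computation, and the fact that $H^1(E^\bullet_2) \hookrightarrow H^1(E^\bullet_2)[a^{-1}]$ by $A$-flatness of the complex, we get $H^1(E^\bullet_2) = 0$. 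The only genuinely technical point is justifying that $H^\bullet$ of an $A$-flat complex injects into $H^\bullet$ of its localization — this is immediate since localization is exact and flatness is not even needed for that direction — so in the end there is no real obstacle, just the need to assemble Lemma \ref{lem:coomD}, the Künneth statement, and Lemma \ref{lem:isoMN} in the right order, exactly paralleling the proof of Lemma \ref{lem:SEcomologia}.
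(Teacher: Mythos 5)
Your overall strategy coincides with the paper's: combine the long exact sequence coming from $0 \to E_2^\bullet \xrightarrow{a\cdot} E_2^\bullet \to E_1^\bullet \to 0$ with the localization $E_2^\bullet[a^{-1}] \simeq E_t^\bullet \otimes_Q E_s^\bullet$, the K\"unneth argument of Lemma \ref{lem:SEcomologia}~d), Lemma \ref{lem:coomD}, and Lemma \ref{lem:isoMN} applied to $A \to H^0(E_2^\bullet)$, $1 \mapsto [1_{\mV_2^{0,0}} \otimes \statovuotoFd]$. That part is fine and is exactly how the paper proceeds.

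There is, however, one genuinely flawed step, concentrated at $n=1$. You assert twice that $H^n(E_2^\bullet) \hookrightarrow H^n(E_2^\bullet)[a^{-1}]$ ``by $A$-flatness of the complex''; this is false as a general principle --- the two-term complex $A \xrightarrow{a\cdot} A$ has flat (free) terms but torsion $H^1$. Torsion-freeness of $H^n(E_2^\bullet)$ must come from the long exact sequence, i.e.\ from the vanishing of $H^{n-1}(E_1^\bullet)$, and precisely at $n=1$ this fails since $H^0(E_1^\bullet) = \mC\,[\statovuotoV\otimes\statovuotoF] \neq 0$: the sequence only gives surjectivity of multiplication by $a$ on $H^1(E_2^\bullet)$, and surjectivity of $a$ together with vanishing of the localization does not force vanishing (consider $\mC((a))/A$). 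The missing ingredient is the paper's opening observation: by definition $E_2^\bullet$ is spanned by elements of charge $-b \leq 0$, so the complex is concentrated in non-positive degrees and $H^n(E_2^\bullet)=0$ for $n>0$ is immediate. This simultaneously disposes of the $n=1$ case, shows that $H^0(E_2^\bullet) \to H^0(E_1^\bullet)$ is surjective (hence the connecting maps into positive degrees vanish), and leaves the long exact sequence to give torsion-freeness of $H^n(E_2^\bullet)$ for all $n \leq 0$, after which your localization and Lemma \ref{lem:isoMN} arguments go through verbatim.
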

\begin{proof}
	By definition, the complex $E^\bullet_2$ is concentrated in non-positive degrees. Hence, the long exact sequence induced by
	$$
	\xymatrix{ 0 \ar[r] &  E^\bullet_2 \ar[r]^{a\cdot } & E^\bullet_2  \ar[r] & E^\bullet_1 \ar[r] & 0}
	$$
	implies that $H^n(E^\bullet_2)$ is torsion free for every $n$, and that the specialisation of $H^0(E^\bullet_2)$ is isomorphic to $H^0(E^\bullet_1)$. Since semi-infinite cohomology commutes with localization (Lemma \ref{lem:SEcomologia}), using Lemma \ref{lem:isoMN} and Lemma \ref{lem:coomD} we get the desired result. 
\end{proof}

We now compute the cohomology of $D^\bullet_2$. The strategy is similar, but the argument is less straightforward since we do not have an explicit representative for $H^0(D_1^\bullet)$. Following the strategy in \cite{FB}, we introduce the following bigraded structure on $D^\bullet_2$. Recall that the \textit{height} $\het(\gra)$ of a root $\gra$ is equal to the sum of the coefficients of $\gra$ when written as a sum of simple roots. Let also  $e_{\pr},h_{\pr},f_{\pr}$ be an $\gos\gol(2)$-triple 
such that $f_{\pr}=\sum_{\gra\text{ simple}}f_\gra$ and $h_{\pr}$ belongs to $\got$.

\begin{definition}\label{def:bigrado}
	We define a  bidegree, with values in $\tfrac12\mZ\times \tfrac12 \mZ$ and denoted by $\bigrado$, as follows.
    On elements of $\hgog_2$, we set
	\[	\bigrado (x\otimes g) =(-n,n)	\]
	if $x\in \gog$  is such that $[h_{\pr},x] = 2\,n\, x$ and $g\in K_2$. We set also the bidegree of the central element $C_2\in\hgog_2$ to be $(0,0)$. This induces a bidegree on $U(\hgog_2)$. On the space $X_2=K_2\otimes \gon_+\oplus K_2\otimes \gon_+^*$ (see Section \ref{sec:Cliff}) we define 
	$$ \bigrado e_\gra\otimes g = (-\het(\gra),-1+\het(\gra)) $$
    $$ \bigrado \psi_\gra^* \otimes g= (\het(\gra),1-\het(\gra)) 
	$$
	for $\gra$ a positive root and $g$ any element of $K_2$. This induces a bidegree on the Clifford algebra $\Cl_2$. Moreover, if $W$ is any finite-dimensional representation of $\gog$, then we set
$$
\bigrado w = (-n,n)
$$
if $w\in W$ is such that $h_{\pr}\cdot w= 2\, n\, w$. These choices induces a bidegree on the module
$C^\bullet_2(\grl,\mu)$, and the 
element $\hat x ^{(2)}(g)$ is homogeneous of bidegree $(-n,n)$ if  $[h_{\pr},x] = 2\,n\, x$.
Finally, notice that if an element has bidegree $(p,q)$, then it has charge  $p+q$. In particular, we introduce the submodule $D_2^{p,q}$ of elements of $D^{p+q}_2$ of bidegree $(p,q)$. 
\end{definition}

We notice also that $\bigrado d^{(2)}_{\std}=(0,1)$ and that $\bigrado \chi^{(2)}=(1,0)$. In particular, $D^{\bullet,\bullet}_2$ is a double complex and $D^\bullet_2$ is the associated total complex. Following Frenkel and Ben Zvi \cite[Chapter 15]{FB}, the cohomology of the rows of this double complex is easy to describe. Let $\goa$ be the centralizer of $f_{\pr}$ in $\gog$. Recall from \cite[Lemma 15.1.3 and Section 15.2.9]{FB} that the space spanned by monomials of the form 
$(\hat p_{1})_{n_1}\cdots (\hat p_{k})_{n_k}\cdot \statovuotoV\otimes \statovuotoF$ with $p_i\in \goa$ generates a commutative vertex subalgebra $F_1$ of $\mV\otimes \Fock^\bullet$ isomorphic to $S^\bullet (\goa\otimes t^{-1}\mC[t^{-1}])$. As in Section \ref{ssec:commutazioni}, it follows that for $x,y\in \goa$ the fields $\hat x^{(2)}$ and $\hat y^{(2)}$ commute. 

We define $F_2(\grl,\mu)$ as the span of elements of the form 
$$
\hat x^{(2)}_1(g_1)\cdots \hat x^{(2)}_k(g_k)\cdot (v\otimes \statovuotoFd)\in \Weyl\otimes_A\Fock^\bullet_2
$$
with $x_1,\dots,x_k\in\goa$ and $v\in V^\grl\otimes V^\mu$.
Notice that all these elements have charge equal to zero, and that the space  $F_2(\grl,\mu)$ splits 
as a direct sum $F_2(\grl,\mu)=\bigoplus_{q}F^{-q,q}_2(\grl,\mu)$ according to the bidegree introduced above. Moreover, by Proposition \ref{proposition:commutazioni} d), these elements are annihilated by the action of $\chi^{(2)}$. 

Similarly we construct subspaces $F_1^{-q,q}(\nu)\subset \mV^\nu_1\otimes_\mC \Fock_1^\bullet$, 
$F_t^{-q,q}(\grl)\subset \mV^\grl_t\otimes_Q \Fock_t^\bullet$,
$F_s^{-q,q}(\mu)\subset \mV^\grl_s\otimes_Q \Fock_s^\bullet$,
and 
$F_1^{-q,q}(\grl,\mu)\subset \mW^{\grl,\mu}_1\otimes_\mC \Fock_1^\bullet$, In particular, 
$F_1^{-q,q}(\grl,\mu)=\bigoplus_\nu F_1^{-q,q}(\nu)$ where the sum is over all irreducible factors of $V^\grl\otimes_\mC V^\mu$. By construction, the specialisation and localization maps induce isomorphisms
$$
\frac{F_2^{-q,q}(\grl,\mu)}{aF_2^{-q,q}(\grl,\mu)}\simeq F_1^{-q,q}(\grl,\mu)\quad \text{and} \quad
F_2^{-q,q}(\grl,\mu)[a^{-1}]\simeq \bigoplus_{b+c=q} F_t^{-b,b}(\grl)\otimes_Q F_s^{-c,c}(\mu).
$$

\begin{comment}
Given a finite dimensional $\gog$ module $V$ let  $V(n)$ be the homogenous component of double degree $(-n,n)$.  Let $P_2$ be the polynomial algebra $A[f_\gra \otimes w_i: \gra$ a positive root and $i<0]$ where $w_i\in K_2$ are as in Section \ref{sec:basi}. On $P_2$ there is an action of $h_{pr}$  and define $P_2(n)$ as the subspace of elements of weight $-2n$ w.r.t.\ $h_{pr}$. Set 
$$
F^{\grl,\mu}_2(q)=\bigoplus_{i,j\in\tfrac12 \mZ:\, i+j=q} P_2(i) \otimes_\mC (V^\grl\otimes_\mC V^\mu)(j).
$$
This is a free $A$-submodule of $D^{-q,q}_2$ that is annihilated by $\chi^{(2)}$.
Similarly define $P_1(n)$, and $F_1^{\grl,\mu}(q)$ and $P_t(n)$ and $P_s(n)$. Set also  
$$
F^{\nu}_1(q)= \bigoplus_{i,j\in\tfrac12 \mZ:\, i+j=q} P_1(i) \otimes_\mC V^\nu_1(j)
$$
and similarly define $F^\grl_t(q)$ and $F^\mu_s(q)$. 
\end{comment}

Recall the following result on the cohomology of $D_1^{\bullet,q}$ with respect to the boundary $\chi^{(1)}$.

\begin{lemma}[{\cite[Lemma 15.2.10]{FB} and \cite{FG6}}]\label{lem:righeD1} Let $2 p_\nu=\langle\nu,h_{\pr}\rangle $.
	\begin{enumerate}[\indent a)]
\item $D_1^{p,q}(\nu)=0$ for $q>p_\nu$ and for $p<-q$. In particular, $D_1^{p,q}=0$ for 
$q>p_{\grl+\mu}$ and for $p<-q$;
\item $H^n(D_1^{\bullet,q}(\nu))=0$ for $n\neq -q$. In particular, $H^n(D_1^{\bullet,q}(\grl,\mu))=0$ for $n\neq -q$;
\item The map $v\mapsto [v]$ from $F^{-q,q}_1(\nu)$ to $H^{-q}(D_1^{\bullet,q}(\nu))$ is an isomorphism. 
    \end{enumerate}
    Finally, it follows from c) that the map $v\mapsto [v]$ from $F_1^{-q,q}(\grl,\mu)$ to $H^{-q}(D_1^{\bullet,q}(\grl,\mu))$ is also an isomorphism.
\end{lemma}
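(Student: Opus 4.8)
The plan is to observe that the single‑weight assertions (a), (b), (c) are exactly \cite[Lemma 15.2.10]{FB} (the vanishing statement being re‑established in \cite{FG6}), to recall the mechanism behind them, and then to deduce the $(\grl,\mu)$‑versions by a direct sum reduction. First, part (a) is a pure bidegree count: by Definition \ref{def:bigrado} every generator $\hat y^{(1)}(h)$ with $y\in\gob_-=\gon_-\oplus\got$ has bidegree with non‑positive second coordinate (it is $(0,0)$ for $y\in\got$ and of the shape $(m,-m)$ with $m\geq 1$ for a negative root vector), every generator $(\psi_\gra^*)^{(1)}(k)$ has bidegree $(\het(\gra),1-\het(\gra))$ with second coordinate $1-\het(\gra)\leq 0$, and a weight vector $w\in V^\nu$ has bidegree $(-m,m)$ with $m\leq p_\nu$, since $\langle\nu,h_{\pr}\rangle=2p_\nu$ is the largest $h_{\pr}$‑weight of $V^\nu$. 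Adding second coordinates gives $q\leq p_\nu$, hence $D_1^{p,q}(\nu)=0$ for $q>p_\nu$; and $p+q$ is the charge, which counts the $\psi^*$‑factors of a monomial and is therefore $\geq 0$, so $D_1^{p,q}(\nu)=0$ for $p<-q$.

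For parts (b) and (c) I would fix $q$ and analyse the row complex $(D_1^{\bullet,q}(\nu),\chi^{(1)})$, where $\chi^{(1)}$ has bidegree $(1,0)$. By Proposition \ref{proposition:commutazioni}, $\chi^{(1)}$ anticommutes to zero with every factor $(\psi_\gra^*)^{(1)}(k)$, annihilates $w\otimes\statovuotoF$ for $w\in V^\nu$ (the zero mode $\psi_{\pr}^*\otimes 1$ kills the Fock vacuum), and replaces each factor $\hat y^{(1)}(h)$ with $y\in\gob_-$ by $\sum_{\gra\in\Phi^+}\kappa([f_{\pr},y],e_\gra)\,(\psi_\gra^*)^{(1)}(h)$, a combination of $\psi^*$‑factors carrying the same $h\in K_1$. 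On a PBW‑type monomial basis of $D_1^\bullet(\nu)$ organised by the $t$‑modes ($t^{-n}$, $n\geq 1$, and $t^0$) the operator $\chi^{(1)}$ is mode‑preserving, so the complex splits as a restricted tensor product over the modes: the $t^0$‑contribution reduces to $V^\nu$ with zero differential, and for $n\geq 1$ the $t^{-n}$‑contribution is the Koszul complex $S^\bullet(\gob_-\otimes t^{-n})\otimes\bigwedge^\bullet(\gon_+^*\otimes t^{-n})$ whose differential contracts against the bilinear form $\beta(y,e)=\kappa([f_{\pr},y],e)$ on $\gob_-\times\gon_+$. I would then check that the left kernel of $\beta$ is $\goa=\ker(\operatorname{ad}(f_{\pr}))$ -- if $\beta(y,-)=0$ then $[f_{\pr},y]$ lies in the $\kappa$‑orthogonal $\gon_+^\perp=\gob$, and since it also lies in strictly negative $h_{\pr}$‑weights it must vanish -- that $\beta$ has zero right kernel (from the principal $\gos\gol(2)$‑decomposition $\gog=\bigoplus_i V_{2d_i}$ and the nondegeneracy of $\kappa$ on each isotypic summand), and that $\dim(\gob_-/\goa)=\dim\gon_+$; hence $\beta$ descends to a perfect pairing and the $t^{-n}$‑complex is the Koszul complex of a regular sequence, with cohomology $S^\bullet(\goa\otimes t^{-n})$ in a single bidegree. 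By K\"unneth over $\mC$, together with the bidegree bounds of (a) to locate it, the cohomology of each row $D_1^{\bullet,q}(\nu)$ is concentrated in the single cohomological degree $p=-q$, where it is spanned by the classes of the defining monomials of $F_1^{-q,q}(\nu)$ (which are $\chi^{(1)}$‑closed by Proposition \ref{proposition:commutazioni}); this gives (b) and the isomorphism $v\mapsto[v]$ of (c).

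Finally, for the $(\grl,\mu)$‑statements I would decompose $V^\grl\otimes_\mC V^\mu=\bigoplus_\nu V^\nu$ into irreducibles with multiplicity. This decomposition is $\gog$‑equivariant, hence compatible with the $h_{\pr}$‑grading, and after induction it gives $\mW_1^{\grl,\mu}=\bigoplus_\nu\mV_1^\nu$, so $C_1^\bullet(\grl,\mu)=\bigoplus_\nu C_1^\bullet(\nu)$ as bigraded complexes, compatibly with the subcomplexes $D_1^\bullet$, the differentials $\chi^{(1)}$ and $d_{\std}^{(1)}$, and the subspaces $F_1^{-q,q}$. In particular $D_1^{p,q}(\grl,\mu)=\bigoplus_\nu D_1^{p,q}(\nu)$ and $F_1^{-q,q}(\grl,\mu)=\bigoplus_\nu F_1^{-q,q}(\nu)$, so the bounds in (a) and the vanishing in (b) hold verbatim with $p_{\grl+\mu}=\max_\nu p_\nu$ in place of $p_\nu$, and the map $v\mapsto[v]$ for $(\grl,\mu)$ is the direct sum of the isomorphisms of (c), hence again an isomorphism.

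Since (a)--(c) are quoted from the literature, the main difficulty is not this reduction -- which is routine -- but the input of Sections 15.2.9--15.2.10 of \cite{FB}: the delicate step there is the identification of $\chi^{(1)}$ with a Koszul differential, i.e.\ choosing the monomial basis of $D_1^\bullet$ so that $\chi^{(1)}$ is mode‑preserving and block‑diagonal, and verifying that the normal‑ordering corrections hidden in $\hat y^{(1)}$ (the $N(\gra_y)$‑terms) contribute nothing to $\chi^{(1)}$ beyond the formula of Proposition \ref{proposition:commutazioni}. By the discussion of Section \ref{ssec:commutazioni}, all of this passes from the one‑field to the two‑field formalism without any extra computation, which is why the $(\grl,\mu)$‑addenda follow so cheaply.
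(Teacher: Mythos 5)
Your proposal is correct and matches the paper's treatment: the paper simply quotes (a)--(c) from \cite[Lemma 15.2.10]{FB} and \cite{FG6} and obtains the final claim from the decompositions $D_1^{\bullet,q}(\grl,\mu)=\bigoplus_\nu D_1^{\bullet,q}(\nu)$ and $F_1^{-q,q}(\grl,\mu)=\bigoplus_\nu F_1^{-q,q}(\nu)$, exactly as in your last paragraph. Your reconstruction of the cited input (the bidegree count for (a) and the identification of $\chi^{(1)}$ with a Koszul differential for the perfect pairing $(\gob_-/\goa)\times\gon_+\to\mC$, yielding $S^\bullet(\goa\otimes t^{-1}\mC[t^{-1}])\otimes V^\nu$ in charge zero) is accurate but goes beyond what the paper reproves.
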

Similar results hold for the complexes $D^{\bullet,q}_t(\grl)$ 
and $D^{\bullet,q}_s(\mu)$. From this result we deduce the cohomology of the complex $D^{\bullet,q}_q$ with respect to 
the boundary operator $\chi^{(2)}$.

\begin{lemma}\label{lem:righeD2} Let $2 p_0=\langle \grl+\mu,h_{\pr}\rangle$ as above.
	\begin{enumerate}[\indent a)]
	\item $D_2^{p,q}=0$ for $q>p_0$ and for $p<-q$; 
	\item $H^n(D_2^{\bullet,q})=0$ for $n\neq -q$;
	\item The map $v\mapsto [v]$ from $F_2^{-q,q}(\grl,\mu)$ to $H^{-q}(D_2^{\bullet,q}(\grl,\mu))$ is an isomorphism of $A$-modules. 
\end{enumerate}	
\end{lemma}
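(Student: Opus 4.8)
The strategy is to reduce everything to the corresponding statement over $\mC$, namely Lemma \ref{lem:righeD1}, by the usual specialisation/localization argument (Lemma \ref{lem:isoMN}), exactly as was done for $E_2^\bullet$ in the proof of Lemma \ref{lem:coomE2}. Part a) is essentially a matter of inspecting bidegrees: by Definition \ref{def:bigrado}, a generator of the form \eqref{eq:defE} has its bidegree determined by the $h_{\pr}$-weights of the $y_i$, the $\psi_j^*$ and $w$. Since each $\hat y_i^{(2)}(h_i)$ with $y_i\in\gob_-$ contributes a bidegree $(-n_i,n_i)$ with $n_i\leq 0$, each $(\psi_{\gra_j}^*)^{(2)}(k_j)$ contributes $(\het(\gra_j),1-\het(\gra_j))$ with $\het(\gra_j)\geq 1$, and $w\in V^\grl\otimes V^\mu$ contributes $(-n,n)$ with $-n \leq p_{\grl+\mu}=p_0$, one reads off the two inequalities $q\leq p_0$ and $p\geq -q$. (Equivalently, one may invoke that $D_2^{p,q}/aD_2^{p,q}\simeq D_1^{p,q}(\grl,\mu)$ is free over $A$ and vanishes modulo $a$ precisely in the claimed range, hence vanishes; but the direct check is immediate.)

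For parts b) and c), I would first record that $D_2^{\bullet,q}$, with differential $\chi^{(2)}$, is a complex of free $A$-modules that is compatible with specialisation and localization: by the isomorphisms displayed just before Lemma \ref{lem:righeD1}, together with $D_2^\bullet/aD_2^\bullet\simeq D_1^\bullet(\grl,\mu)$ and $D_2^\bullet[a^{-1}]\simeq D_t^\bullet(\grl)\otimes_Q D_s^\bullet(\mu)$, one has
$$
\frac{D_2^{\bullet,q}}{aD_2^{\bullet,q}}\simeq D_1^{\bullet,q}(\grl,\mu)\qquad\text{and}\qquad D_2^{\bullet,q}[a^{-1}]\simeq \bigoplus_{b+c=q} D_t^{\bullet,b}(\grl)\otimes_Q D_s^{\bullet,c}(\mu),
$$
the $\chi$-differentials being compatible under these identifications since $\Sp(\chi^{(2)})=\chi^{(1)}$ and $E(\chi^{(2)})=\chi^{(t,s)}=I(\chi^{(t)})+J(\chi^{(s)})$. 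Now apply the short exact sequence $0\to D_2^{\bullet,q}\xrightarrow{a} D_2^{\bullet,q}\to D_1^{\bullet,q}(\grl,\mu)\to 0$ and its long exact cohomology sequence. Since $H^n(D_1^{\bullet,q}(\grl,\mu))=0$ for $n\neq -q$ by Lemma \ref{lem:righeD1}b), the long exact sequence gives that multiplication by $a$ is surjective on $H^n(D_2^{\bullet,q})$ for $n\neq -q,-q-1$ and injective for $n\neq -q,-q+1$; combined with the vanishing of $D_2^{p,q}$ outside a bounded range of $p$ from part a), a descending induction on $n$ (starting from the top, where the complex is zero) shows $H^n(D_2^{\bullet,q})$ is $a$-divisible and $a$-torsion-free, hence — being the cohomology of a complex of finitely generated... actually more carefully: for $n\neq -q$ it is $a$-divisible, so it dies after localizing is false; rather, one argues via Lemma \ref{lem:SEcomologia}-type compatibility that $H^n(D_2^{\bullet,q})[a^{-1}]=H^n(D_2^{\bullet,q}[a^{-1}])$, and the right-hand side vanishes for $n\neq -q$ by the one-variable result applied to $D_t$ and $D_s$ (Künneth over the field $Q$, as in Lemma \ref{lem:SEcomologia}d)); then an $a$-divisible module that becomes $0$ after inverting $a$ and whose reduction mod $a$ is $0$ must itself be $0$, giving b).

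For c), consider the $A$-module map $F_2^{-q,q}(\grl,\mu)\to H^{-q}(D_2^{\bullet,q})$, $v\mapsto[v]$; it is well-defined since these elements are $\chi^{(2)}$-closed by Proposition \ref{proposition:commutazioni}d). Both sides are flat (indeed free) $A$-modules: the left by construction, the right by b) together with the torsion-freeness coming from the long exact sequence. Its reduction mod $a$ is the isomorphism of Lemma \ref{lem:righeD1}c) (using $F_2^{-q,q}(\grl,\mu)/a\simeq F_1^{-q,q}(\grl,\mu)$), hence in particular injective mod $a$; and its localization at $a$ is, under the Künneth identification, the tensor product $\bigoplus_{b+c=q}(F_t^{-b,b}(\grl)\to H^{-b}(D_t^{\bullet,b}))\otimes_Q(F_s^{-c,c}(\mu)\to H^{-c}(D_s^{\bullet,c}))$ of the one-variable isomorphisms, hence an isomorphism. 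By the last clause of Lemma \ref{lem:isoMN} the map is an isomorphism, proving c). The only genuinely delicate point is the bookkeeping in the second paragraph — ensuring the boundedness from a) really does let the long exact sequence be run to the conclusion that the off-diagonal cohomology is simultaneously $a$-divisible and $a$-torsion-free — but once the localization computation over $Q$ is in hand this is routine, and it is the same mechanism already used for $E_2^\bullet$.
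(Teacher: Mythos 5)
Your overall strategy (reduce to Lemma \ref{lem:righeD1} via localization and specialisation, then invoke Lemma \ref{lem:isoMN}) is the same as the paper's, and parts a) and c) are essentially right. But your proof of part b) has a genuine gap in its final step. The implication you invoke --- ``an $a$-divisible module that becomes $0$ after inverting $a$ and whose reduction mod $a$ is $0$ must itself be $0$'' --- is false over $A=\mC[[a]]$: the module $Q/A=\mC((a))/\mC[[a]]$ is $a$-divisible, has zero localization and zero reduction mod $a$, yet is nonzero. (Nor can you fall back on the ``descending induction from the top'': for fixed $q$ the complex $D_2^{\bullet,q}$ is bounded below at $p=-q$ but \emph{not} bounded above, since arbitrarily many factors $(\psi^*_\gra)^{(2)}$ with $\gra$ simple leave $q$ unchanged.) The correct mechanism is the dual one: torsion-freeness plus vanishing of the localization forces vanishing, because a torsion-free module injects into its localization. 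Your long exact sequence does give injectivity of multiplication by $a$ on $H^n(D_2^{\bullet,q})$ for $n\neq -q,\,-q+1$, so for those $n$ the argument closes immediately; the real problem is the single degree $n=-q+1$, where the sequence only gives surjectivity, and the potential $a$-torsion is exactly the image of the connecting map $H^{-q}(D_1^{\bullet,q}(\grl,\mu))\to H^{-q+1}(D_2^{\bullet,q})$, i.e.\ the cokernel of $H^{-q}(D_2^{\bullet,q})\to H^{-q}(D_1^{\bullet,q}(\grl,\mu))$.

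To kill that cokernel you need to know that the injection $\iota\colon H^{-q}(D_2^{\bullet,q})/aH^{-q}(D_2^{\bullet,q})\hookrightarrow H^{-q}(D_1^{\bullet,q}(\grl,\mu))$ is onto, and this is precisely what the proof of c) delivers: the composite $F_1^{-q,q}(\grl,\mu)\to H^{-q}(D_2^{\bullet,q})/a\xrightarrow{\iota} H^{-q}(D_1^{\bullet,q}(\grl,\mu))$ is the isomorphism of Lemma \ref{lem:righeD1}\,c), so $\iota$ is bijective. This is why the paper proves c) \emph{before} completing b). Note that c) does not depend on b) --- it only needs torsion-freeness of $H^{-q}(D_2^{\bullet,q})$, which the long exact sequence gives since $-q\neq -q+1$ --- so your phrase ``the right by b)'' should be replaced by a direct appeal to the long exact sequence; as written it makes the corrected argument look circular. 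One further small imprecision in c): the reduction mod $a$ of your map lands in $H^{-q}(D_2^{\bullet,q})/a$, not in $H^{-q}(D_1^{\bullet,q}(\grl,\mu))$; it is only its composition with $\iota$ that is the isomorphism of Lemma \ref{lem:righeD1}\,c), but since injectivity of the reduction is all that Lemma \ref{lem:isoMN} requires, this does not affect the conclusion.
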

\begin{proof}Part a) is clear for the definition of $D_2^{p,q}=0$. For parts b) and c), we start by studying the localization of the cohomology groups of $D_2^{\bullet,q}$. Equivalently, we aim to compute the cohomology of the localization of the row $D_2^{\bullet,q}$. This localization can be rewritten as 
	\[ \bigoplus_{b+c=q} D_t^{\bullet,b}(\grl)\otimes D_s^{\bullet,c}(\mu). \]
	In particular, it follows from Lemma \ref{lem:righeD1} that its cohomology is concentrated in degree $-q$, and that its cohomology in this degree is given by
	\[ \bigoplus_{b+c=q} F^{-b,b}_t(\grl)\otimes F_s^{-c,c}(\mu), \]
   which is the localization of  $ F^{-q,q}_2(\grl,\mu)$.
	Since specialisation is compatible with $\bigrado$, we have an isomorphism $D^{\bullet,q} _2/aD^{\bullet,q} _2 \simeq D^{\bullet,q} _1(\grl, \mu)$. Using Lemma \ref{lem:righeD1}, the associated long exact sequence shows that $H^n(D_2^{\bullet,q})$ is torsion-free for $n\neq -q+1$, and that the map $$\iota: H^{-q}(D_2^{\bullet,q})/a H^{-q}(D_2^{\bullet,q}) \to H^{-q}(D^{\bullet,q} _1(\grl, \mu))$$
	is injective.
	
	We now prove c). Notice that both $F_2^{-q,q}(\grl,\mu)$ and $H^{-q}(D_2^{\bullet,q}(\grl,\mu))$ are torsion-free. We have already shown that the localization of the natural maps between them is an isomorphism. To study its specialisation, we compose it with the injection $\iota$. This composition is the isomorphism of the last remark of Lemma \ref{lem:righeD1}. We conclude by applying Lemma \ref{lem:isoMN}.
	
	In order to prove b), it is enough to notice that from the above discussion we know that, for $n\neq -q$, the module $H^n(D_2^{\bullet,q})=0$ is torsion-free, and that its localization is trivial.
\end{proof}	
	
Let now be $\grf_i^{(q)}$ be an $A$-basis of $F_2^{-q,q}(\grl, \mu)$. 
Since the cohomology in degree $-q$ of the complex $D_2^{\bullet,q+1}$ is zero, there exists an element $\grf_{i,1}^{(q)} \in D_2^{-q-1,q+1}$ such that $\chi^{(2)}(\grf_{i,1}^{(q)})=
-d_{\std}^{(2)}(\grf_i^{(q)})$. By induction, we can construct elements $\grf_{i,0}^{(q)}= \grf_{i}^{(q)}$ and $\grf_{i,\ell}^{(q)}\in D_2^{-q-\ell,q+\ell}$ such that their sum
$$
\tilde \grf_i^{(q)}=\sum _{\ell=0} ^{p_0-q} \grf_{i,\ell}^{(q)}
$$
satisfies $d^{(2)}(\tilde \grf^{(q)}_i)=0$. We now prove the main result of this section.

\begin{theorem}\label{teo:comologiaWeyl2}The following hold.
\begin{enumerate}[\indent a)]
	\item $\semiinf^n(\mV_2^{\grl,\mu})=0$ for $n\neq 0$. 
	\item We have an isomorphism 
	$$
	\frac {\semiinfz(\mV_2^{\grl,\mu})} {a\semiinfz(\mV_2^{\grl,\mu})}\simeq 
	\semiinfz(\mW_1^{\grl, \mu})\simeq \bigoplus _\nu \semiinfz(\mV_1^\nu)
	$$
	where the sum ranges over all irreducible components $V^\nu$ of $V^\grl\otimes V^\mu$, counted with multiplicity. 
	\item The elements $\big[\tilde \grf_i^{(q)}\big]$ are an $A$-basis of $\semiinfz(\mV^{\grl,\mu})$.
\end{enumerate}
\end{theorem}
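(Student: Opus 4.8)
The plan is to follow the strategy of Frenkel and Ben Zvi, organised around the isomorphism $C_2^\bullet\cong E_2^\bullet\otimes_A D_2^\bullet$ recalled above and the double complex $D_2^{\bullet,\bullet}$ of Definition~\ref{def:bigrado}: I would deduce parts (a) and (c) from the cohomology of $D_2^\bullet$, and obtain (b) from (a) by specialisation. The first move is a reduction to $D_2^\bullet$. By Lemma~\ref{lem:coomE2}, $E_2^\bullet$ is a bounded-above complex of free $A$-modules whose cohomology is $A$, concentrated in degree $0$; hence the canonical surjection $E_2^\bullet\to A$ (placed in degree $0$) is a quasi-isomorphism of bounded-above complexes of projective $A$-modules, and therefore a homotopy equivalence. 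Tensoring this over $A$ with $D_2^\bullet$ and composing with $C_2^\bullet\cong E_2^\bullet\otimes_A D_2^\bullet$, one finds that the inclusion $\iota\colon D_2^\bullet\hookrightarrow C_2^\bullet$ induced by $\xi\mapsto(1_{\mV^{0,0}_2}\otimes\statovuotoFd)\otimes\xi$ is a homotopy equivalence. Consequently $\semiinf^n(\Weyl)=H^n(C_2^\bullet)\cong H^n(D_2^\bullet)$, and $\iota$ carries each cocycle $\tilde\grf_i^{(q)}\in D_2^0$ to a representative of the corresponding class in $\semiinfz(\Weyl)$, so it is enough to work with $D_2^\bullet$.

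To compute $H^\bullet(D_2^\bullet)$ I would run a bottom-up elimination along the decreasing filtration $\mathrm{Fil}_j D_2^n=\bigoplus_{q\geq j}D_2^{n-q,q}$. This is a filtration by subcomplexes (since $\bigrado d^{(2)}_{\std}=(0,1)$ and $\bigrado\chi^{(2)}=(1,0)$) and it satisfies $\mathrm{Fil}_{p_0+1}=0$ by Lemma~\ref{lem:righeD2}(a). Given a $d^{(2)}$-cocycle $z=\sum_q z_q$ with lowest nonzero component $z_{q_0}$, extracting the $q_0$-component of $d^{(2)}z=0$ gives $\chi^{(2)}(z_{q_0})=0$, so $z_{q_0}$ is a $\chi^{(2)}$-cocycle in $D_2^{n-q_0,q_0}$, i.e. defines a class in $H^{n-q_0}(D_2^{\bullet,q_0})$ in the notation of Lemma~\ref{lem:righeD2}. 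If $n\neq 0$ then $n-q_0\neq-q_0$, so that group vanishes by Lemma~\ref{lem:righeD2}(b); thus $z_{q_0}=\chi^{(2)}(w)$ for some $w$, and replacing $z$ by $z-d^{(2)}(w)$ pushes it into $\mathrm{Fil}_{q_0+1}$. Iterating (it terminates because $\mathrm{Fil}_{p_0+1}=0$) makes $z$ a $d^{(2)}$-coboundary, giving (a) — and $D_2^n=0$ for $n<0$ directly. For $n=0$ note that $D_2^{-1}=0$, so $H^0(D_2^\bullet)$ is the space of cocycles; moreover such a $w$ would lie in charge $-1$, hence $w=0$, so $z_{q_0}$ lies in $F_2^{-q_0,q_0}(\grl,\mu)$ on the nose, using Lemma~\ref{lem:righeD2}(c). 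Writing $z_{q_0}=\sum_i c_i\grf_i^{(q_0)}$ and recalling that the $q_0$-component of $\tilde\grf_i^{(q_0)}$ is exactly $\grf_i^{(q_0)}$, the cocycle $z-\sum_i c_i\tilde\grf_i^{(q_0)}$ lies in $\mathrm{Fil}_{q_0+1}$; iterating expresses $z$ as an $A$-combination of the $\tilde\grf_i^{(q)}$, so these generate $H^0(D_2^\bullet)$. Their $A$-independence follows by the same lowest-$q$ inspection: a nontrivial relation among the $\tilde\grf_i^{(q)}$ would force one among the $\grf_i^{(q_0)}$ inside the free module $F_2^{-q_0,q_0}(\grl,\mu)$. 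Together with the homotopy equivalence this proves (a) and (c).

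Part (b) is then formal. Since the terms of $C_2^\bullet$ are free over $A$, the sequence $0\to C_2^\bullet\xrightarrow{a}C_2^\bullet\to C_2^\bullet/aC_2^\bullet\to 0$ is exact, and $C_2^\bullet/aC_2^\bullet\cong C_1^\bullet(\grl,\mu)$ by \eqref{eq:localizzazioneespecializzazionecomplesso}; the associated long exact sequence in cohomology, combined with $\semiinf^n(\Weyl)=0$ for $n\neq 0$, collapses to $0\to\semiinfz(\Weyl)\xrightarrow{a}\semiinfz(\Weyl)\to\semiinfz(\mW_1^{\grl,\mu})\to 0$. Finally, since induction commutes with direct sums, $\mW_1^{\grl,\mu}\cong\bigoplus_\nu(\mV_1^\nu)^{\oplus m_\nu}$ with $V^\grl\otimes V^\mu=\bigoplus_\nu(V^\nu)^{\oplus m_\nu}$, so $\semiinfz(\mW_1^{\grl,\mu})\cong\bigoplus_\nu\semiinfz(\mV_1^\nu)^{\oplus m_\nu}$, which yields the stated isomorphisms.

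The step I expect to be delicate is the elimination in the second paragraph: $C_2^\bullet$ and $D_2^\bullet$ are genuinely unbounded complexes, so the usual convergence of the spectral sequence of the double complex is not available as a black box; the explicit termination argument — each elimination strictly raises the $q$-level, and $q\leq p_0$ caps the process — is what replaces it, and running the same device in degree $0$ (where there are no coboundaries, since $D_2^{-1}=0$) is precisely what upgrades "the $[\tilde\grf_i^{(q)}]$ generate" to "the $[\tilde\grf_i^{(q)}]$ form an $A$-basis".
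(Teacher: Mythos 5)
Your argument is correct and follows essentially the same route as the paper: the paper likewise computes $H^\bullet(D_2^\bullet)$ from Lemma \ref{lem:righeD2} via the double complex (your staircase elimination, with termination guaranteed by $q\leq p_0$, is exactly the argument it leaves implicit, and the $\tilde\grf_i^{(q)}$ are constructed there for this purpose), then combines this with Lemma \ref{lem:coomE2} through the decomposition $C_2^\bullet\simeq E_2^\bullet\otimes_A D_2^\bullet$, and obtains b) from the long exact sequence of $C_2^\bullet\xrightarrow{a}C_2^\bullet\to C_1^\bullet(\grl,\mu)$. The only cosmetic difference is the order of the Künneth-type collapse (you contract $E_2^\bullet$ to $A$ first; the paper contracts $D_2^\bullet$ to $H^0(D_2^\bullet)$ first), which changes nothing of substance.
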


\begin{proof}
	From Lemma \ref{lem:righeD2} we deduce that the classes of the elements $\tilde \grf_i^{(q)}$ form an $A$-basis of $H^0(D_2^\bullet)$, and that $H^n(D_2^\bullet)=0$ for $n\neq 0$. As the complex $D_2^\bullet$ is concentrated in non-negative degrees, by a standard homological argument we deduce that $H^n(\mV_2^{\grl,\mu})$ is isomorphic to the $n$-th cohomology of the complex $H^0(D_2^\bullet) \otimes_A E_2^\bullet$. Using Lemma \ref{lem:coomE2}, we immediately obtain parts a) and c).
	
	The second isomorphism appearing in part $b)$ is clear, while the first follows from a) and the long exact sequence associated with the isomorphism
	\[\frac {C_2^\bullet}{aC_2^\bullet}\simeq C^\bullet_1(\grl,\mu).\qedhere\]
\end{proof} 

We will use the following Corollary in the next Section. 
\begin{corollary}\label{cor:indivisibilitavv}
The element $[v_\grl\otimes v_\mu]\in \semiinfz(\mV^{\grl,\mu})$
is indivisible.
\end{corollary}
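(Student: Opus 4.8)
The plan is to reduce indivisibility to a nonvanishing statement after specialisation at $a=0$, and then to identify the specialised class with the Frenkel--Gaitsgory generator of $\semiinfz(\mV_1^{\grl+\mu})$. Since $\semiinfz(\Weyl)$ is a free $A$-module by Theorem \ref{teo:comologiaWeyl2}(c), an element of it is indivisible precisely when its image in $\semiinfz(\Weyl)/a\semiinfz(\Weyl)$ is nonzero. By Theorem \ref{teo:comologiaWeyl2}(b) this quotient is identified with $\semiinfz(\mW_1^{\grl,\mu})\simeq\bigoplus_\nu\semiinfz(\mV_1^\nu)$; unwinding the proof of that theorem, the isomorphism is induced by the reduction-mod-$a$ chain map $C_2^\bullet\to C_1^\bullet(\grl,\mu)$ of \eqref{eq:localizzazioneespecializzazionecomplesso}, which by \eqref{eq:SEWeyl} and Lemma \ref{lem:SpEFock}(a) sends the cochain $v_\grl\otimes v_\mu\otimes\statovuotoFd$ to $v_\grl\otimes v_\mu\otimes\statovuotoF$ in $C_1^\bullet(\grl,\mu)=\mW_1^{\grl,\mu}\otimes_\mC\Fock^\bullet_1$. (That $v_\grl\otimes v_\mu\otimes\statovuotoFd$ is a $d^{(2)}$-cocycle, as implicitly required for the statement, is easy to see: $\chi^{(2)}=1_{\hU_2}\otimes\psi^*_{\pr}$ annihilates it because $\psi^*_{\pr}\otimes 1\in\Cl_2^+$ kills $\statovuotoFd$, and $d^{(2)}_\std$ annihilates it either by the computation for the highest weight vector in \cite[Chapter 15]{FB} or, since $C_2^\bullet$ is $A$-flat, because both the localisation and the specialisation of $d^{(2)}(v_\grl\otimes v_\mu\otimes\statovuotoFd)$ vanish, so Lemma \ref{lem:isoMN}(a) applies.) Thus it suffices to prove that $[v_\grl\otimes v_\mu\otimes\statovuotoF]$ is nonzero in $\semiinfz(\mW_1^{\grl,\mu})$.

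For this I would use only elementary representation theory. The vector $v_\grl\otimes v_\mu$ has weight $\grl+\mu$ and is annihilated by $\gon_+$; since the weight $\grl+\mu$ occurs with multiplicity one in $V^\grl\otimes V^\mu$ and only in the Cartan component $V^{\grl+\mu}$, the vector $v_\grl\otimes v_\mu$ is a nonzero scalar multiple of the highest weight vector $v_{\grl+\mu}$ of that component, with zero component in every other irreducible summand. Under the decomposition $\mW_1^{\grl,\mu}=\bigoplus_\nu\mV_1^\nu$ induced by $V^\grl\otimes V^\mu=\bigoplus_\nu V^\nu$, it follows that $v_\grl\otimes v_\mu\otimes\statovuotoF$ lies in the direct summand $C_1^\bullet(\grl+\mu)$ and equals there a nonzero multiple of $v_{\grl+\mu}\otimes\statovuotoF$.

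Finally I would invoke Theorem \ref{thm:FG6teo}: the class $[v_{\grl+\mu}\otimes\statovuotoF]$ generates $\semiinfz(\mV_1^{\grl+\mu})$ as a $Z_1$-module, and the latter is isomorphic to $\Funct(\Op_1^{\grl+\mu})$, a nonzero ring; hence $[v_{\grl+\mu}\otimes\statovuotoF]\neq 0$. Since the inclusion of the summand $\semiinfz(\mV_1^{\grl+\mu})$ into $\bigoplus_\nu\semiinfz(\mV_1^\nu)\simeq\semiinfz(\mW_1^{\grl,\mu})$ is injective, we get $[v_\grl\otimes v_\mu\otimes\statovuotoF]\neq 0$, and therefore $[v_\grl\otimes v_\mu]$ is indivisible in $\semiinfz(\Weyl)$. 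The argument is essentially routine; the only point requiring a little care is the bookkeeping in the first paragraph, namely checking that the abstract isomorphism of Theorem \ref{teo:comologiaWeyl2}(b) really carries the explicit cocycle $v_\grl\otimes v_\mu\otimes\statovuotoFd$ to the explicit cocycle $v_\grl\otimes v_\mu\otimes\statovuotoF$, rather than to some other representative of a possibly different class.
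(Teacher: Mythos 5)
Your proof is correct, but it follows a genuinely different route from the paper's, whose entire argument is one line: by Theorem \ref{teo:comologiaWeyl2}(c) the classes $[\tilde \grf_i^{(q)}]$ form an $A$-basis of $\semiinfz(\Weyl)$, where the $\grf_i^{(q)}$ run over an $A$-basis of $F_2^{-q,q}(\grl,\mu)$; the cochain $v_\grl\otimes v_\mu\otimes\statovuotoFd$ lies in $F_2^{-p_0,p_0}(\grl,\mu)$ and can be completed to an $A$-basis of that module, and for the extremal bidegree $q=p_0$ no correction terms $\grf_{i,\ell}^{(q)}$ with $\ell\geq 1$ occur (they would live in $D_2^{-p_0-\ell,p_0+\ell}=0$), so $[v_\grl\otimes v_\mu]$ is literally a member of an $A$-basis of a free module and hence indivisible. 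You instead use only the freeness assertion of Theorem \ref{teo:comologiaWeyl2}(c), reduce indivisibility over $A=\mC[[a]]$ to nonvanishing of the class modulo $a$, and then invoke Theorem \ref{thm:FG6teo} to see that the reduced class --- a nonzero multiple of $[v_{\grl+\mu}\otimes\statovuotoF]$ sitting in the summand $\semiinfz(\mV_1^{\grl+\mu})$ --- is nonzero. This costs you the bookkeeping you flag at the end (checking that the isomorphism of Theorem \ref{teo:comologiaWeyl2}(b) is induced by the chain-level specialisation map, which it is, being extracted from the long exact sequence of $0\to C_2^\bullet\xrightarrow{a}C_2^\bullet\to C_1^\bullet(\grl,\mu)\to 0$) together with an appeal to the Frenkel--Gaitsgory theorem; in exchange it depends only on freeness and not on the particular construction of the basis, and it records explicitly which summand of $\semiinfz(\mW_1^{\grl,\mu})$ the specialised class lands in --- an observation the paper itself reuses in the proof of Proposition \ref{prop:noniso}. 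Your parenthetical verification that $v_\grl\otimes v_\mu\otimes\statovuotoFd$ is a $d^{(2)}$-cocycle is also sound and is a point the paper leaves implicit here (it is stated only later, in the proof of Theorem \ref{teo:comologiatWeyl2}).
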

\begin{proof}
By the previous theorem we can choose $[v_\grl\otimes v_\mu]$ as an element of a basis of the free $A$ module $\semiinfz(\mV^{\grl,\mu})$.
\end{proof}

\section{The action of the center}\label{sec:azionecentro}
\begin{comment}
Let $Z_2$ be the center of the algebra $\hU_2$, and similarly introduce the center 
$Z_1$ of $\hU_1$ and the centers $Z_t$ and $Z_s$ of $\hU_t$ and $\hU_s$.
The action of $\hU_2$ on the module $\mV_2^{\grl,\mu}$ induces an action of $Z_2$ on 
$\mV_2^{\grl,\mu}\otimes _A \Fock_2^\bullet$, which commutes with the differential $d^{(2)}$ and hence induces an action of $Z_2$ on the semi-infinite cohomology of $\mV_2^{\grl,\mu}$. A similar action is defined in the case of $\hU_1$-modules. In \cite{FG6} Frenkel and Gaitsgory proved that as a $Z_1$-module $H_{si}^0(\mV_1^\nu)$ is isomorphic to $\End_{\hg_1}(\mV_1^{\nu})$ and through the Feigin Frenkel isomorphism to $\Funct(\Op^\grl)$.
\end{comment}
In this section we study the action of the center $Z_2$ on the semi-infinite cohomology of the module $\Weyl$.

In this section we show that $\mV_2^{\grl,\mu}$ is not a perfect analogue of the Weyl module $\mV_1^{\nu}$. Indeed, we show that, as a $Z_2$-module, the semi-infinite cohomology of $\mV_2^{\grl,\mu}$ is not isomorphic to $\End_{\hg_2}(\mV_2^{\grl,\mu})$
or to $\Funct(\Op_2^{\grl,\mu})$.

We begin by observing that the module $\semiinfz(\mV_1^{\nu})$ has no non-trivial $Z_1$-equivariant automorphisms.

First we notice, that by construction, the action of $Z_2$ commutes with localization and specialisation, as introduced before Equation \eqref{eq:localizzazioneespecializzazionecomplesso}. Concretely, we have:
$$
E_t(z\cdot x)=E_t(z)\cdot E_t(x),\quad
E_s(z\cdot x)=E_s(z)\cdot E_s(x), \quad 
\Sp(z\cdot x)=\Sp(z)\cdot \Sp(x)
$$
for all $z\in Z_2$ and for all $x\in \semiinfz(\mV_2^{\grl,\mu})$. 

\begin{lemma}\label{lem:unicitaisosucampo}
	
If $\calK:\End_{\hg_t}(\mV^{\grl}_t)\otimes _Q\End_{\hg_s}(\mV^{\mu}_s) \lra \semiinfz(\mV_t^\grl)\otimes_Q \semiinfz(\mV_s^\mu)$ is a $(Z_t\otimes Z_s)$-equivariant isomorphism, then $\calK(\Id_{\mV^\grl_t }\otimes\Id_{\mV^\mu_s })=q[v_\grl]\otimes [v_\mu]$ for some $q\in Q\senza\{0\}$.

\end{lemma}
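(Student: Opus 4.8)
The plan is to reduce everything to the one-singularity statement via the factorization isomorphisms, and then to use the rigidity of $\semiinfz(\mV_1^\nu)$ as a $Z_1$-module (the $\gos\gol(2)$-analogue of the fact that $\End_{\hg_1}(\mV_1^\nu)\simeq\Funct(\Op_1^\nu)$ has no non-trivial equivariant automorphisms, stated in Theorem \ref{thm:FG6teo} and exploited in the remark preceding this lemma). First I would recall that, by the $t$- and $s$-versions of Theorem \ref{thm:FG6teo}, we have $Z_t$-equivariant isomorphisms $\End_{\hg_t}(\mV^\grl_t)\simeq\semiinfz(\mV^\grl_t)\simeq\Funct(\Op^\grl_t)$ sending $\Id_{\mV^\grl_t}$ to $[v_\grl\otimes\statovuotoF]$, and likewise for $s$. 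Tensoring over $Q$, the source and target of $\calK$ are each identified with $\Funct(\Op^\grl_t)\otimes_Q\Funct(\Op^\mu_s)=\Funct(\Op^\grl_t\times\Op^\mu_s)$, and $Z_t\otimes_Q Z_s$ acts through this identification; the distinguished element $\Id\otimes\Id$ corresponds to the constant function $1$.

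The key step is then: a $(Z_t\otimes_Q Z_s)$-equivariant $Q$-linear automorphism $\calK$ of the free rank-one module $\Funct(\Op^\grl_t)\otimes_Q\Funct(\Op^\mu_s)$ over itself must be multiplication by a unit, hence sends $1$ to a unit. Here I would use that $\Op^\grl_t\times\Op^\mu_s$ is affine and that $Z_t\otimes_Q Z_s\to \Funct(\Op^\grl_t\times\Op^\mu_s)$ is (via the Feigin--Frenkel isomorphism for $t$ and $s$) surjective onto this algebra; an equivariant endomorphism of a ring, viewed as a module over itself, is determined by the image of $1$ and is automatically multiplication by that image, so if $\calK$ is an isomorphism its value on $1$ is invertible. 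Finally one has to show that the only units of $\Funct(\Op^\grl_t\times\Op^\mu_s)$ which pull back to something of the form $q\cdot 1$ are exactly the nonzero scalars $q\in Q$ — but in fact the statement only claims that $\calK(\Id\otimes\Id)$ equals $q[v_\grl]\otimes[v_\mu]$ for \emph{some} $q\in Q\smallsetminus\{0\}$, which is weaker: here $[v_\grl]\otimes[v_\mu]$ is the image of $1$ under the identifications above, so what must be shown is merely that $\calK$ sends the generator $1$ to a $Q$-multiple of the generator $1$. This will follow if we know that the invertible elements of $\Funct(\Op^\grl_t)\otimes_Q\Funct(\Op^\mu_s)$ that are \emph{fixed}, up to scalar, by the relevant structure are scalars; concretely, since $\Op^\grl_1$ is an affine space (a torsor under a vector space, by Frenkel--Gaitsgory), its ring of functions is a polynomial ring, whose units are the nonzero constants, and the same holds for the product $\Op^\grl_t\times\Op^\mu_s$ over $Q$. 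Hence $\calK(1)\in Q^\times\cdot 1$, i.e. $\calK(\Id\otimes\Id)=q\,[v_\grl]\otimes[v_\mu]$ with $q\in Q\smallsetminus\{0\}$.

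The main obstacle I anticipate is making precise the claim that an equivariant isomorphism of a commutative ring regarded as a rank-one module over itself is multiplication by a unit — one needs the action map $Z_t\otimes_Q Z_s\to\End_{Q}(\text{source})$ to hit enough endomorphisms, which is exactly the surjectivity of Feigin--Frenkel onto $\Funct(\Op^\grl_t\times\Op^\mu_s)$ combined with the fact that $\semiinfz(\mV^\grl_t)$ is a \emph{free} rank-one module over $Z_t/\mathrm{Ann}\simeq\Funct(\Op^\grl_t)$ (again Theorem \ref{thm:FG6teo}). Once that module-theoretic fact is nailed down, identifying the units of the function ring is routine since the relevant schemes are affine spaces over $Q$.
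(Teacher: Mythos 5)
Your argument is correct and follows essentially the same route as the paper: identify both sides with $\Funct(\Op^\grl_t)\otimes_Q\Funct(\Op^\mu_s)$ via Theorem \ref{thm:FG6teo} (with $\Id\otimes\Id$ and $[v_\grl]\otimes[v_\mu]$ both corresponding to $1$), observe that an equivariant isomorphism must be multiplication by a unit, and use that this ring is a polynomial ring in infinitely many variables over $Q$, whose only units are the nonzero scalars. The paper's proof is just a terser version of the same reasoning.
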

\begin{proof}
It follows from Theorem \ref{thm:FG6teo} that $\End_{\hg_{t}\times \hg_s}(\mV^{\grl_t}\otimes_Q \mV^{\mu_s})$ is isomorphic to 
$\Funct(\Op_t^{\grl}\times_{\Spec Q}\Op_s^{\mu} )=\Funct(\Op_t^{\grl})\otimes_Q\Funct(\Op_s^{\mu})$ and this is a polynomial ring in infinitely many variables over the field $Q$. In particular, its only invertible elements are the non-zero scalars in $Q$. 

Moreover, Theorem \ref{thm:FG6teo} also implies that $\Funct(\Op^{\grl}_t)$ is isomorphic as a $Z_t$-module to $\semiinfz(\mV_t^\grl)$, with an isomorphism given by 
$ z \lra \calG_t (z) \cdot[v_\grl]$.
The claim follows.
\end{proof}

Before proving that $\Weyl$ does not have the ``right'' semi-infinite cohomology we recall some properties of the modules $\mV^\nu_1$ that will be needed also in the next section.
\begin{remark}\label{rmk:Opnudisgiunti}
We denote by $Z_1^\nu$ the coordinate ring of the scheme $\Op^\nu_1$. Recall that the schemes $\Op_1^{\nu}$ for different values of $\nu$ are disjoint, so that the map $Z_1\lra Z_1^{\nu_1}\times \dots \times Z_1^{\nu_k}$ is surjective if the weights $\nu_i$ are distinct. Recall also that the ring $Z^\nu_1$ is a polynomial ring in infinitely many variables. This implies that 
\begin{enumerate}
	\item There are no nontrivial $\hgog_1$-morphisms between the $\hU_1$-modules $\mV^\nu_1$ and $\mV^{\nu'}_1$ if $\nu\neq \nu'$.
	\item There are no nontrivial extensions between the $\hU_1$-modules $\mV^\nu_1$ and $\mV^{\nu'}_1$ if $\nu\neq \nu'$.
	\item Assume that $\gra : \prod Z^{\nu_i} \lra \prod Z^{\nu_i}$ is a map of $Z$-modules  and that the weights $\nu_i$ are distinct.
	If $1$ is in the image of $\gra$ then $\gra$ is an isomorphism and $\gra(Z_1^{\nu_i})= Z_1^{\nu_i}$. 
	%	\item $\Ext^k_{\hgog_1}(\mV^\nu_1,\mV^{\nu'}_1)=0$ if $\nu\neq \nu'$.
\end{enumerate}
\end{remark}

By the Feigin-Frenkel Theorem (see \cite{FLMM} Theorem 5.2) the ring $\Funct(\Op_2)$ is isomorphic to $Z_2$. In the sequel we will identify these rings through this isomorphism. In particular the ring $\Funct(\Op_2^{\grl,\mu})$ is a quotient 
of $Z_2$. We will denote $\Funct(\Op_2^{\grl,\mu})$ by $Z_2^{\grl,\mu}$. 

We now prove that $Z_2^{\grl,\mu}$ and $\semiinfz(\mV_2^{\grl,\mu})$ are not isomorphic.

\begin{proposition}\label{prop:noniso}
	Assume that $V^\grl\otimes V^{\mu}$ is not irreducible. 
	Then the two $Z_2$-modules $\End_{\hg_2}(\mV_2^{\grl,\mu})$ and $\semiinfz(\mV_2^{\grl,\mu})$ are not isomorphic. Similarly the two $Z_2$-modules $Z_2^{\grl,\mu}$ and $\semiinfz(\mV_2^{\grl,\mu})$ are not isomorphic.
\end{proposition}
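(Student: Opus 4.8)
The plan is to exploit the structure of $\semiinfz(\mV_2^{\grl,\mu})$ as a free $A$-module (Theorem \ref{teo:comologiaWeyl2}) together with the behaviour under specialisation at $a=0$ and localisation at $a\neq 0$, and derive a contradiction from assuming an isomorphism of $Z_2$-modules. The key point of asymmetry is the following: $\semiinfz(\mV_2^{\grl,\mu})$ specialises at $a=0$ to $\bigoplus_\nu \semiinfz(\mV_1^\nu)$, a direct sum over \emph{all} irreducible factors $V^\nu$ of $V^\grl\otimes V^\mu$ counted with multiplicity, whereas $Z_2^{\grl,\mu}=\Funct(\Op_2^{\grl,\mu})$ specialises at $a=0$ to $Z_1^{\grl+\mu}=\Funct(\Op_1^{\grl+\mu})$ only --- the scheme $\Op_2^{\grl,\mu}$ is built so that at the collision point only the top weight $\grl+\mu$ survives. (Likewise $\End_{\hg_2}(\mV_2^{\grl,\mu})\simeq Z_2^{\grl,\mu}$ by the main result of \cite{FLMM} recalled in the introduction.) So the two modules differ already after reduction mod $a$ as soon as $V^\grl\otimes V^\mu$ has more than one irreducible constituent.

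Concretely, I would proceed as follows. First, record that both modules are torsion-free (in fact free) over $A$, that localisation at $a$ gives on the one hand $\semiinfz(\mV_t^\grl)\otimes_Q\semiinfz(\mV_s^\mu)\cong Z_t^\grl\otimes_Q Z_s^\mu$ (by Lemma \ref{lem:SEcomologia}d and Theorem \ref{thm:FG6teo}) and on the other hand $Z_2^{\grl,\mu}[a^{-1}]\cong \Funct(\Op_t^\grl\times_{\Spec Q}\Op_s^\mu)=Z_t^\grl\otimes_Q Z_s^\mu$ --- so over $Q$ the two are isomorphic as $Z_2[a^{-1}]=(Z_t\otimes Z_s)$-modules, and any putative isomorphism $\calK$ localises to one of these. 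By Lemma \ref{lem:unicitaisosucampo}, after localisation $\calK$ must send $1\in Z_2^{\grl,\mu}[a^{-1}]$ (equivalently $\Id$ under $Z_2^{\grl,\mu}\simeq\End$) to $q\,[v_\grl]\otimes[v_\mu]$ for some $q\in Q\setminus\{0\}$. Next I would clear the denominator: since $[v_\grl\otimes v_\mu]\in\semiinfz(\mV_2^{\grl,\mu})$ is indivisible by Corollary \ref{cor:indivisibilitavv}, and $1\in Z_2^{\grl,\mu}$ is indivisible (it generates a free rank-one $Z_2^{\grl,\mu}$-summand), the constant $q$ must actually be a unit in $A$ times a power of $a$; comparing indivisibility on both sides forces $q\in A^\times$, so after rescaling $\calK(1)=[v_\grl\otimes v_\mu]$ and $\calK$ is an honest isomorphism over $A$.

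Now I reduce modulo $a$. The induced map $\overline{\calK}\colon Z_1^{\grl+\mu}\to \bigoplus_\nu\semiinfz(\mV_1^\nu)$ is a $Z_1$-module isomorphism (specialisation commutes with the $Z_2$-action, which factors through $Z_1$). But $Z_1^{\grl+\mu}$ is supported set-theoretically on the single component $\Op_1^{\grl+\mu}$, i.e.\ it is killed by the ideal of $Z_1$ cutting out the other components $\Op_1^\nu$, $\nu\neq\grl+\mu$ (these components are disjoint, Remark \ref{rmk:Opnudisgiunti}). Hence its image under any $Z_1$-linear map is also killed by that ideal, so it lands inside the $\nu=\grl+\mu$ summand $\semiinfz(\mV_1^{\grl+\mu})\simeq Z_1^{\grl+\mu}$. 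Since by hypothesis $V^\grl\otimes V^\mu$ has some irreducible factor $V^\nu$ with $\nu\neq\grl+\mu$, the summand $\semiinfz(\mV_1^\nu)\neq 0$ is not hit, contradicting surjectivity of $\overline{\calK}$. This proves $\semiinfz(\mV_2^{\grl,\mu})\not\simeq Z_2^{\grl,\mu}$, and the statement for $\End_{\hg_2}(\mV_2^{\grl,\mu})$ follows since that module is isomorphic to $Z_2^{\grl,\mu}$ as a $Z_2$-module.

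**Expected main obstacle.** The delicate point is the passage from "isomorphism over $Q$ after localisation" to "isomorphism over $A$" --- one must check that an abstract $Z_2$-module isomorphism $\calK$ over $A$, localised, is forced by Lemma \ref{lem:unicitaisosucampo} to carry the distinguished generator to a unit multiple of $[v_\grl]\otimes[v_\mu]$, and that no power of $a$ can intervene; this is where indivisibility (Corollary \ref{cor:indivisibilitavv}) and the precise description of $Z_2^{\grl,\mu}$ as a free rank-one module over itself are used. A secondary subtlety is to make sure the specialisation $Z_2^{\grl,\mu}/aZ_2^{\grl,\mu}$ really is $Z_1^{\grl+\mu}$ and not something larger with nilpotents --- this is a property of the scheme $\Op_2^{\grl,\mu}$ established in \cite{FLMM}, and I would simply cite it.
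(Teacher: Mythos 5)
Your overall strategy --- localise, use Lemma \ref{lem:unicitaisosucampo} together with Corollary \ref{cor:indivisibilitavv} to force the image of the generator to be (a multiple of) $[v_\grl\otimes v_\mu]$, then specialise and contradict surjectivity --- is exactly the paper's. However, your proof rests on a false premise about $Z_2^{\grl,\mu}$. You assert that $\Op_2^{\grl,\mu}$ is ``built so that at the collision point only the top weight $\grl+\mu$ survives'', i.e.\ that $Z_2^{\grl,\mu}/aZ_2^{\grl,\mu}\simeq Z_1^{\grl+\mu}$. The opposite is true: by \cite[Theorem 2.13]{FLMM} (invoked later in this paper, in the proof of Theorem \ref{teo:comologiatWeyl2}), the fibre of $\Op_2^{\grl,\mu}$ at $a=0$ is the union of \emph{all} the components $\Op_1^\nu$ with $V^\nu$ an irreducible constituent of $V^\grl\otimes V^\mu$, so $Z_2^{\grl,\mu}/aZ_2^{\grl,\mu}\simeq\prod_\nu Z_1^\nu$. (This is forced already by $\End_{\hg_2}(\Weyl)\simeq Z_2^{\grl,\mu}$ together with $\Weyl/a\Weyl\simeq\mW_1^{\grl,\mu}=\bigoplus_\nu\mV_1^\nu$.) Consequently your ``key point of asymmetry'' is not the actual obstruction: the specialisations of $Z_2^{\grl,\mu}$ and of $\semiinfz(\Weyl)$ \emph{are} abstractly isomorphic as $Z_1$-modules. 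The whole content of the proposition is that no $Z_2$-linear isomorphism over $A$ can specialise to such an abstract isomorphism.

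The false premise then infects the decisive step of your third paragraph: you argue that the image of $\overline{\calK}$ lies in the summand $\semiinfz(\mV_1^{\grl+\mu})$ because the \emph{source} $Z_2^{\grl,\mu}/a$ is annihilated by the ideals of the components $\Op_1^\nu$, $\nu\neq\grl+\mu$. Since the source is in fact $\prod_\nu Z_1^\nu$, it is not annihilated by those ideals and the support argument collapses. The correct (and easy) repair is the one the paper uses: the source is cyclic over $Z_1$, generated by $\overline{1}$ (resp.\ $\overline{\Id}$), whose image $\overline{q\,v_\grl\otimes v_\mu}$ lies in the $Z_1$-stable summand $\semiinfz(\mV_1^{\grl+\mu})$ because $v_\grl\otimes v_\mu$ generates the $\hgog_1$-submodule $\mV_1^{\grl+\mu}\subset\mW_1^{\grl,\mu}$; hence the image of $\overline{\calK}$, being $Z_1\cdot\overline{q\,v_\grl\otimes v_\mu}$, misses every other summand. (Note that with this argument you do not even need your digression showing $q\in A^\times$: $q\in A$ suffices.) Everything else --- the localisation step, the use of Lemma \ref{lem:unicitaisosucampo}, the reduction of the $\End_{\hg_2}(\Weyl)$ statement to the $Z_2^{\grl,\mu}$ statement via the factorisation of the action map --- is sound.
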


\begin{proof}
Suppose $\calH :\End_{\hg_2}(\mV_2^{\grl,\mu})\lra \semiinfz(\mV_2^{\grl,\mu})$ is a $Z_2$-equivariant isomorphism. 
	
Recall from Lemma 4.28 in \cite{FLMM} that $Z_2[1/a]$ is dense in $Z_{t,s}$, and therefore the localization of $\calH$ is a $(Z_t \otimes_Q Z_s)$-equivariant isomorphism
\[ \End_{\hg_t}(\mV^{\grl}_t)\otimes _Q\End_{\hg_s}(\mV^{\mu}_s) \lra \semiinfz(\mV_t^\grl)\otimes_Q \semiinfz(\mV_s^\mu),\]
where we used the identification of the localization of $\semiinfz(\Weyl)$ with $\semiinfz(\mV_t^\grl)\otimes_Q \semiinfz(\mV_s^\mu)$.

From Lemma \ref{lem:unicitaisosucampo} and \ref{cor:indivisibilitavv} we deduce that $\calH (\Id_{\mV^{\grl,\mu}_2})=[q \,v_\grl\otimes v_\mu]$, where $q \in A$ and $qv_\grl\otimes v_\mu\in \Weyl$. We set $w=qv_\grl\otimes v_\mu\in \Weyl$.
%By Corollary \ref{cor:indivisibilitavv} we see that $q\in A$. \footnote{questa osservazione non \`e strettamente necessaria ma per ora la lascio}

By specialisation, $\calH$ gives a $Z_1$-equivariant isomorphism
\begin{equation}\label{eq: H bar}
\overline{ \calH } : \frac{\End_{\hg_2}(\mV_2^{\grl,\mu})}{a\End_{\hg_2}(\mV_2^{\grl,\mu})}\lra \frac{\semiinfz(\mV_2^{\grl,\mu})}{a\semiinfz(\mV_2^{\grl,\mu})}.
\end{equation}
This isomorphism sends $\overline{\Id_{\mV^{\grl,\mu}_2}}$ to $\overline{w}$. Now consider the decomposition $V^\grl\otimes V^\mu=\bigoplus V^\nu$ as $\gog$-modules. 
By Theorem \ref{teo:comologiaWeyl2}, the target of the map $\overline{\calH}$ in \eqref{eq: H bar} decomposes as $\bigoplus \semiinfz(\mV^\nu_1)$. The element $w$ is a multiple of $v_\grl\otimes v_\mu$ hence its class belongs to $\Psi^0(\mV_1^{\grl+\mu})$.
As $\overline{\calH}$ is $Z_1$-equivariant and $\mV^{\grl+\mu}_1$ is stable by the action of $\hgog_1$, we get that the image of $\overline{\calH}$ is contained in the direct summand $\semiinfz(\mV_1^{\grl+\mu})$. In particular, if $V^\grl\otimes V^\mu$ is not irreducible, the map $\overline{\calH}$ cannot be surjective. This proves the first claim. The second claim follows since the map from 
$Z_2^{\grl,\mu}$ to $\semiinfz(\Weyl)$ factors through $\End_{\hg_2}(\mV_2^{\grl,\mu})$. \end{proof}

\section{A Weyl module for $\gos\gol(2)$}\label{sec:tWeyl}
In this Section, we propose an alternative Weyl module in the context of opers with two singularities, in the case of $\gog=\gos\gol(2)$.  We fix the following notation: $e, h, f$ is an $\gos\gol(2)$-triple such that $h\in \got$ and $e\in \gon_{+}$, while $\psi^*\in\gon_+^*$ is the dual of $e$. We identify dominant weights with natural numbers and we assume from now on that $\grl\geq \mu$. In this case, the differential of the complex computing semi-infinite cohomology takes the simpler form $d^{(2)}=\psi^*+\sum  e w_n\otimes \psi^*z_{-n-1/2}$.

\medskip

Let $\tWeyl$ be the $\hU_2$-submodule of $\mV^{\grl,\mu}_2$ generated by the highest weight vector $1\otimes v_\grl\otimes v_\mu\in A \otimes V^\grl\otimes V^\mu$. We will prove that this module has the ``correct'' semi-infinite cohomology and the ``correct'' endomorphism ring. 

We start by giving a more explicit description of the module $\tWeyl$. If $X$ is a subspace of $U(\gog)$ and $Y$ is a subspace of a $\gog$-module $Z$ we denote by $X\cdot Y$ the subspace of $Z$ generated by the products $x\cdot y$ with $x\in X$ and $y\in Y$. We define an increasing filtration $F^i$ of $\tWeyl$ as follows
$$ F^i=U(\gog) \cdot (\mC\,\Id\otimes \Id + \Id \otimes \gog)^{i} \cdot (v_\grl\otimes v_\mu).$$ 
This is an increasing filtration of $V^\grl\otimes V^\mu$ by $\gog$-modules and for $i$ large enough we have $F^i=V^\grl\otimes V^\mu$. Choose a $\gog$-stable complement $G^{i+1}$ of $F^i $ in $F^{i+1}$ and set $G^0=F^0$, so that $F^i=\bigoplus_{j=0}^i G^j$. If we set $F^i(V^\mu)=(\mC\Id + \gon_-)^i v_\mu$, it is easy to check by induction on $i$ that 
$$ F^i=U(\gog)\cdot (\Id\otimes \Id + \Id \otimes \gon_-)^i (v_\grl\otimes v_\mu)=U(\gog)\cdot \big( \mC v_\grl\otimes F^i(V^\mu)\big). $$
In the case of $\gog=\gos\gol(2)$ we have $G^i\simeq V^{\grl+\mu-2i}$ and $F^\mu=V^\grl\otimes V^\mu $.

Let $U_2^-\subset U(\hgog_2)$ be the $A$-span of Poincar\'e-Birkhoff-Witt monomials of the form $(x_1w_{a_1})\cdots (x_kw_{a_k})$ with $x_i\in \gog$ and $a_i<0$. This is a complement of $U(\hgog_2^+)$ in $U(\hgog_2)$, so that in particular we have
$$
\Weyl=U_2^-\otimes _\mC (V^\grl\otimes V^\mu).
$$

\begin{lemma} \label{lem:descrizionetWeyl}
If $\grl\geq \mu$ then 
$$
\tWeyl = \sum _{i=0}^\mu
a^i U^-_2 \otimes_\mC  F^i =\bigoplus_{i=0}^\mu a^i U^-_2 \otimes_\mC  G^i
$$
\end{lemma}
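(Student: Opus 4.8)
The plan is to prove the two descriptions of $\tWeyl$ separately: first the equality with $\sum_{i=0}^\mu a^i U_2^- \otimes_\mC F^i$, and then observe that the sum is direct by using the grading/filtration coming from powers of $a$ together with the decomposition $F^i = \bigoplus_{j=0}^i G^j$. The inclusion "$\subseteq$" is the essential content; the reverse inclusion and the directness are comparatively formal.

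For the inclusion $\tWeyl \subseteq \sum_{i=0}^\mu a^i U_2^- \otimes_\mC F^i$, I would first check that the right-hand side is an $\hgog_2$-submodule of $\Weyl$ containing the highest weight vector $1\otimes v_\grl\otimes v_\mu$ (it lies in the $i=0$ summand since $F^0 = G^0 = \mC v_\grl\otimes v_\mu$). Containing the generator is clear; the work is stability under the $\hgog_2$-action. Using the decomposition $\Weyl = U_2^- \otimes_\mC (V^\grl\otimes V^\mu)$ and the PBW-type structure of $U_2^-$, it suffices to analyze how an element $x w_n \in \hgog_2$ (with $x\in\gog$, $n\in\tfrac12\mZ$) acts on a monomial $m \otimes u$ with $m\in U_2^-$ and $u\in F^i$. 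If $n<0$ this just enlarges the $U_2^-$-part and keeps $u$, so we stay in $a^i U_2^-\otimes F^i$. If $n\geq 0$ we commute $x w_n$ past $m$: the commutators produce terms with a shorter $U_2^-$-monomial times a coefficient in $A$, and eventually $x w_n$ hits $u\in F^i \subseteq V^\grl\otimes V^\mu$. Here I must use the explicit formula for the $\hgog_2^+$-action on $V^\grl\otimes V^\mu$, namely $f(t,s)x\cdot(p(a)\otimes u\otimes v) = f(0,-a)p(a)\otimes xu\otimes v + f(a,0)p(a)\otimes u\otimes xv$. Writing $w_n$ in terms of $t,s$ and expanding $f(0,-a)$ and $f(a,0)$ as polynomials in $a$, the key point is that the "worst" term (the one with the fewest powers of $a$) either lands in $F^i$ again (when it is built from the $\gog$-action through the identification $F^i = U(\gog)\cdot(\mC v_\grl\otimes F^i(V^\mu))$) or picks up at least one extra factor of $a$ and lands in $aF^{i+1} \subseteq aF^{i+1}$; combined with the bound $F^\mu = V^\grl\otimes V^\mu$ and $i\leq\mu$, this keeps us inside $\sum_{i=0}^\mu a^i U_2^-\otimes F^i$. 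This is the step I expect to be the main obstacle: one has to track precisely, using the second displayed description of $F^i$ and the $\gos\gol(2)$-fact $G^i\simeq V^{\grl+\mu-2i}$, that the action of $w_n$ (for $n\geq 0$, integer or half-integer) never escapes the claimed span, and in particular that the half-integer modes $z_{n+1/2}=t^{n+1}s^n$ and $w_{n+1/2}=t^n s^{n+1}$, which introduce asymmetry between the two tensor factors, still behave well.

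For the reverse inclusion $\sum_{i=0}^\mu a^i U_2^- \otimes_\mC F^i \subseteq \tWeyl$, I would argue by induction on $i$. For $i=0$: $U_2^-\otimes_\mC F^0 = U_2^-\cdot(1\otimes v_\grl\otimes v_\mu)$ is contained in $\tWeyl$ since $\tWeyl$ is $\hgog_2$-stable and $U_2^-$ together with $U(\hgog_2^+)$ generates $U(\hgog_2)$ — actually one checks $U(\hgog_2)\cdot(v_\grl\otimes v_\mu)$ already contains $U_2^-\otimes V^\grl\otimes v_\mu$ and more. For the inductive step, using $F^{i} = U(\gog)\cdot(\mathrm{Id}\otimes\mathrm{Id} + \mathrm{Id}\otimes\gon_-)^i(v_\grl\otimes v_\mu)$, a typical element of $a^i U_2^-\otimes F^i$ is a combination of things of the form $a^i \cdot m \cdot g_1\cdots g_i \cdot (v_\grl\otimes v_\mu)$ with $m\in U_2^-$ and $g_j\in \mC\,\mathrm{Id}\otimes\mathrm{Id}+\mathrm{Id}\otimes\gon_-$; I want to realize the factor $a^i$ together with the "second-leg" action of $\gon_-$ by applying suitable elements of $\hgog_2$ to $\tWeyl$. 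Concretely, for $x\in\gon_-$, comparing $x w_0 = x t^0 s^0$ and $x w_{1/2} = x t^0 s^1$ acting on $v_\grl\otimes v_\mu$ via the formula above, one gets $x w_0\cdot(v_\grl\otimes v_\mu) = xv_\grl\otimes v_\mu + v_\grl\otimes xv_\mu$ and a second combination from which, since $xv_\grl$ lies in the $\grl$-weight spaces spanned already and can be cancelled, one extracts $a\cdot(v_\grl\otimes xv_\mu)$ up to elements already known to be in $\tWeyl$; iterating produces $a^i v_\grl\otimes F^i(V^\mu)$ modulo lower terms, then applying $U(\gog)$ (realized inside $\hgog_2$ via the degree-$0$ modes $x w_0 = x t^0 s^0$ which act as $x$ on both legs, corrected by lower filtration terms) and $U_2^-$ gives the full summand. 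Finally, directness of $\bigoplus_{i=0}^\mu a^i U_2^-\otimes G^i$ follows because $\Weyl = U_2^-\otimes_\mC(V^\grl\otimes V^\mu) = U_2^-\otimes_\mC\bigoplus_{j=0}^\mu G^j$ is $A$-free with the $G^j$ independent over $A$, and multiplication by distinct powers $a^0,\dots,a^\mu$ (with $a^i$ applied to the $G^j$-part only for $j\leq i$, and with $G^i$ occurring with coefficient exactly $a^i$ and not lower) keeps the pieces $A$-linearly independent; equivalently one checks that the associated graded for the $a$-adic filtration sees each $G^i$ exactly once. I would phrase this last point cleanly by noting $a^i U_2^-\otimes G^i \cap \sum_{j<i} a^j U_2^-\otimes F^j = 0$ inside the free $A$-module $\Weyl$.
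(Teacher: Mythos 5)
Your outline is correct and rests on the same basic fact as the paper's proof --- the explicit formula for the action of $\hgog_2^+$ on $A\otimes V^\grl\otimes V^\mu$ --- but it is organised less efficiently, and the step you flag as ``the main obstacle'' is precisely the step the paper's argument eliminates. The paper does not verify that the right-hand side is an $\hgog_2$-submodule of $\Weyl$; instead it uses the triangular (PBW) factorisation $U(\hgog_2)=U_2^-\cdot U(\hgog_2^+)$, which gives $\tWeyl=U_2^-\cdot N$ where $N$ is the $U(\hgog_2^+)$-submodule of $A\otimes_\mC V^\grl\otimes V^\mu$ generated by $1\otimes v_\grl\otimes v_\mu$. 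One then only has to compute $N$, and here the key observation is that every mode $xg$ with $g\in\mC[[t,s]]$ divisible by $ts$ acts trivially on $A\otimes V^\grl\otimes V^\mu$; among the non-negative modes $xw_n$, $n\geq 0$, only $xw_0=x$ (standard diagonal action, preserving each $F^i$) and $xw_{1/2}=xs$ (equal to $-a(x\otimes\Id)$, i.e.\ $-a$ times the standard action plus $a(\Id\otimes x)$, hence mapping $F^i$ into $aF^{i+1}$) survive. This yields $N=\sum_i a^iF^i$ with no commutator bookkeeping at all, and tensoring with $U_2^-$ gives the lemma. If you adopt this reduction, both of your inclusions become immediate, and your worry about the half-integer modes evaporates: all of $w_1,w_{3/2},w_2,\dots$ are divisible by $ts$. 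Your inductive extraction of $a^i(v_\grl\otimes F^i(V^\mu))$ in the reverse inclusion and your closing argument for the directness of $\bigoplus_i a^iU_2^-\otimes G^i$ are both fine.

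Two small inaccuracies, neither fatal. First, $F^0=U(\gog)\cdot(v_\grl\otimes v_\mu)$ is the Cartan component $V^{\grl+\mu}$ of the tensor product, not the line $\mC\,v_\grl\otimes v_\mu$ (the generator does of course still lie in the $i=0$ summand). Second, in the base case of your reverse inclusion, $U(\hgog_2)\cdot(v_\grl\otimes v_\mu)$ does not contain $U_2^-\otimes (V^\grl\otimes v_\mu)$, since $V^\grl\otimes v_\mu$ is not $\gog$-stable; what you actually obtain, and all you need, is $U_2^-\otimes F^0$.
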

\begin{proof}
To understand the module $\tWeyl$ we need to compute the 
$\hgog_2^+$-submodule of $A\otimes_\mC V^\grl \otimes_\mC V^\mu$
generated by $1\otimes v_\grl \otimes v_\mu$. Notice that 
every element of the form $xg$, with $x\in \gog$ and $g \in \mC[[t,s]]$ divisible by $ts$, acts trivially on $A\otimes V^\grl\otimes V^\mu$. Hence we need to understand the action of elements of the form
$$
z=x_1\cdots x_\ell \cdot (y_1t)\cdots (y_mt) \cdot (v_\grl\otimes v_\mu),
$$
with $x_i,y_i\in \gog$. Moreover, elements of $\gog$ act in the standard way on the tensor product $V^\grl\otimes V^\mu$, while elements of the form $xt$ with $x\in \gog$ act via $-a(\Id\otimes x)$. This implies the lemma.
\end{proof}

We now describe the specialisation of the module $\tWeyl$. We introduce the following decreasing filtration of $\tWeyl$:
\begin{equation} \label{eq:filtrazionetWeyl}
\mF_i=\tWeyl \cap a^i \Weyl.
\end{equation}
By Lemma \ref{lem:descrizionetWeyl} we have the following description of the terms of this filtration as $A$-modules:
$$
\mF_i= a^iU_2^- \otimes_\mC F^i \oplus \bigoplus _{j=i+1}^\mu a^jU_2^- \otimes_\mC G^j
$$
In particular we have $\mF_0=\tWeyl$, $\mF_j=a^j\Weyl$ for $j\geq \mu$. 

\begin{lemma}\label{lem:SptWeyl} \begin{enumerate} [\indent a)]
\item 	Let $u_i\in G^i$ be the highest weight vector and set $\tilde w_i =a^iu_i$. Then $\tilde w_i\in \tWeyl$
 and $a^{i-1}u_i\notin  \tWeyl$. 
\item There is an isomorphism of $\hU_1$-modules
$$
\frac{\mF_i+a\tWeyl}{a\tWeyl}\simeq \bigoplus_{j=i}^\mu\mV_1^{\grl+\mu-2j}.
$$
The quotient $\frac{\mF_i+a\tWeyl}{a\tWeyl}$ is generated as a $\hU_1$-module by the classes of $\tilde w_i,\dots,\tilde w_\mu$. 
In particular $\tWeyl/a\tWeyl\simeq \mW_1^{\grl,\mu}$ is generated by $\tilde w_0,\dots,\tilde w_\mu$.
\end{enumerate}
\end{lemma}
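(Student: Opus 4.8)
The plan is to read both parts off the explicit $A$-module descriptions already at hand: the decomposition $\tWeyl=\bigoplus_{j=0}^{\mu}a^{j}U_2^-\otimes_\mC G^{j}$ of Lemma~\ref{lem:descrizionetWeyl}, the description of $\mF_i$ recorded just after \eqref{eq:filtrazionetWeyl}, and the properties of the one-point Weyl modules collected in Remark~\ref{rmk:Opnudisgiunti}. Part a) is then immediate: since $1\in U_2^-$, the vector $\tilde w_i=a^iu_i$ lies in the direct summand $a^iU_2^-\otimes_\mC G^i$ of $\tWeyl$, so $\tilde w_i\in\tWeyl$; conversely $a^{i-1}u_i$ lies in the summand $U_2^-\otimes_\mC G^i$ of $\Weyl=\bigoplus_{j=0}^{\mu}U_2^-\otimes_\mC G^{j}$, where it has $a$-adic valuation exactly $i-1$ because that summand is a free $A$-module and $u_i\neq 0$, whereas $\tWeyl$ meets this summand in $a^iU_2^-\otimes_\mC G^i$, every element of which has valuation at least $i$; hence $a^{i-1}u_i\notin\tWeyl$ for $i\ge 1$ (the assertion being vacuous for $i=0$).

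For b), the bookkeeping identity
$$\mF_i+a\tWeyl=\bigoplus_{j=0}^{i-1}a^{j+1}U_2^-\otimes_\mC G^{j}\oplus\bigoplus_{j=i}^{\mu}a^{j}U_2^-\otimes_\mC G^{j}$$
is obtained by adding $a\tWeyl=\bigoplus_{j=0}^{\mu}a^{j+1}U_2^-\otimes_\mC G^{j}$ to the description of $\mF_i$ (and of $F^i=\bigoplus_{j\le i}G^j$) summand by summand. Set $P_i:=(\mF_i+a\tWeyl)/a\tWeyl$; since $\mF_i\subseteq\tWeyl$ it is annihilated by $a$, hence is an $\hU_1$-module, and the identity exhibits a decreasing filtration $P_i\supseteq P_{i+1}\supseteq\cdots\supseteq P_\mu\supseteq P_{\mu+1}=0$ (the last term vanishing because $\mF_{\mu+1}+a\tWeyl=a^{\mu+1}\Weyl+a\tWeyl=a\tWeyl$) with graded pieces
$$P_k/P_{k+1}\cong\frac{a^kU_2^-\otimes_\mC G^k}{a^{k+1}U_2^-\otimes_\mC G^k}\cong U_1^-\otimes_\mC G^k,$$
where $U_1^-\subset U(\hgog_1)$ is the analogue of $U_2^-$ and $U_1^-\cong U_2^-/aU_2^-$ compatibly with the bases of Section~\ref{sec:basi} (this being the specialisation $\hU_2/a\hU_2\simeq\hU_1$ of Section~\ref{sez:defU}).

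The crucial point is to promote the last isomorphism to one of $\hU_1$-modules $P_k/P_{k+1}\simeq\mV_1^{\grl+\mu-2k}$. I would verify it by tracking the $\hgog_2$-action on the representative $\tilde w_k=a^ku_k$ through $\Sp$: the elements $x\otimes 1$ with $x\in\gog$ act diagonally on $V^\grl\otimes V^\mu$, hence on $u_k$ through the irreducible $\gog$-module $G^k\cong V^{\grl+\mu-2k}$ (with $u_k$ its highest weight vector); the central element acts by $-\tfrac12$; and for $m\ge 1$ the element $x\otimes t^m$ sends $a^ku_k$ to $a^{k+m}(\Id_{V^\grl}\otimes x)u_k$, which belongs to $\mF_{k+1}+a\tWeyl$ because $(\Id_{V^\grl}\otimes\gog)\cdot G^k\subseteq F^{k+1}$ --- an easy induction using $\gog\cdot F^k(V^\mu)\subseteq F^{k+1}(V^\mu)$ and the description $F^k=U(\gog)\cdot(\mC v_\grl\otimes F^k(V^\mu))$. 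So $t\mC[t]\otimes\gog$ annihilates the class of $\tilde w_k$ in $P_k/P_{k+1}$, and the universal property of the induced module $\mV_1^{\grl+\mu-2k}=\Ind_{\hgog_1^+}^{\hgog_1}V^{\grl+\mu-2k}$ produces an $\hU_1$-linear map $\phi_k\colon\mV_1^{\grl+\mu-2k}\to P_k/P_{k+1}$ carrying the highest weight vector to the class of $\tilde w_k$. This $\phi_k$ is surjective because $\Sp\colon U_2^-\to U_1^-$ is, and injective because $\Sp$ identifies the PBW basis of $U_2^-$ with that of $U_1^-$, so that a nonzero element of $U_1^-\otimes_\mC V^{\grl+\mu-2k}$ lifts to an element of $a^kU_2^-\otimes_\mC G^k$ of $a$-adic valuation exactly $k$; alternatively one may compare graded dimensions, both modules having Poincar\'e series $\dim V^{\grl+\mu-2k}\cdot\prod_{m\ge 1}(1-q^m)^{-\dim\gog}$.

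It remains to assemble the pieces. The weights $\grl+\mu-2k$ for $k=i,\dots,\mu$ are pairwise distinct, so by Remark~\ref{rmk:Opnudisgiunti}(2) there are no nonsplit extensions among the $\mV_1^{\grl+\mu-2k}$; a downward induction on $k$ therefore splits the filtration of $P_i$, giving $P_i\simeq\bigoplus_{j=i}^{\mu}\mV_1^{\grl+\mu-2j}$. Since each $P_k/P_{k+1}$ is generated over $\hU_1$ by the class of $\tilde w_k$ (as $\phi_k$ is onto and $\mV_1^{\grl+\mu-2k}$ is generated by its highest weight vector), a second downward induction shows that $P_i$ is generated by the classes of $\tilde w_i,\dots,\tilde w_\mu$. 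For $i=0$ one has $\mF_0=\tWeyl$, and the Clebsch--Gordan decomposition $V^\grl\otimes V^\mu=\bigoplus_{j=0}^{\mu}V^{\grl+\mu-2j}$ identifies $\bigoplus_{j=0}^{\mu}\mV_1^{\grl+\mu-2j}$ with $\mW_1^{\grl,\mu}=\Ind_{\hgog_1^+}^{\hgog_1}(V^\grl\otimes V^\mu)$, so $\tWeyl/a\tWeyl\simeq\mW_1^{\grl,\mu}$, generated by $\tilde w_0,\dots,\tilde w_\mu$. The main difficulty is precisely the $\hU_1$-module identification of the graded pieces $P_k/P_{k+1}$; once that is in place, the splitting and the generation statement are purely formal.
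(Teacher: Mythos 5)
Your proof is correct and follows essentially the same route as the paper's: part a) is read off the direct-sum decomposition of Lemma \ref{lem:descrizionetWeyl}, and part b) identifies the graded pieces of the filtration $(\mF_i+a\tWeyl)/a\tWeyl$ with $U_1^-\otimes_\mC G^k\simeq\mV_1^{\grl+\mu-2k}$ and then splits the filtration using the absence of extensions between $\mV_1^\nu$ and $\mV_1^{\nu'}$ for $\nu\neq\nu'$ (Remark \ref{rmk:Opnudisgiunti}), exactly as in the paper's downward induction. Your explicit verification via the universal property of the induced module that the identification $P_k/P_{k+1}\simeq\mV_1^{\grl+\mu-2k}$ is $\hU_1$-equivariant spells out a step the paper leaves terse, but it is the same argument.
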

\begin{proof}The first claim follows from Lemma \ref{lem:descrizionetWeyl}.
		
	We prove part b) by decreasing induction on $i$. By Lemma \ref{lem:descrizionetWeyl}, for $i>\mu$ the quotient is zero and the claim is true. For $i\leq \mu$, consider the map
	$$
	U_2^- \otimes G^i \lra \frac{\mF_i + a^{i+1}\Weyl}{a^{i+1}\Weyl + \mF_i \cap a\tWeyl} \simeq  \frac{(\mF_{i}+a\tWeyl )/ a\tWeyl}{(\mF_{i+1}+a\tWeyl)/ a\tWeyl}
	$$
	sending an element $u\otimes v$ to the class of $a^iu\otimes v$. This map induces an isomorphism
	\begin{equation}\label{eq:isomFiltr}
	 \frac{U_2^-}{aU_2^-}\otimes G^i \simeq \frac{(\mF_{i}+a\tWeyl )/ a\tWeyl}{(\mF_{i+1}+a\tWeyl)/ a\tWeyl}.
\end{equation}
Moreover, notice that
	$\frac{U_2^-}{aU_2^-}\otimes G^i \simeq U_1^- \otimes G^i$, where $U_1^-=U(t^{-1}\gog[t^{-1}])\subset U(\hgog_1)=U_1$, and that $U_1^- \otimes G^i$ has a natural structure of $U_1$-module, as it can be identified with $\mV^{\grl+\mu-2i}_1$. With this $U_1$-action, the isomorphism $\ref{eq:isomFiltr}$ is $U_1$-equivariant.
	 Now the claim follows by the inductive hypothesis, combined with the fact that there are no nontrivial extensions between modules $\mV^\nu_1$ and $\mV^{\nu'}_1$ if $\nu\neq \nu'$ and that the highest weight vector of $V^\nu$ generates the module $\mV^\nu_1$ as an $U_1$-module. 
\end{proof}

Notice that, although the specialisations at $a=0$ of $\Weyl$ and $\tWeyl$ are isomorphic, the specialisation of $\tWeyl$,  is generated by $v_\grl\otimes v_\mu$ while in the first case this vector generates the submodule $\mV^{\grl+\mu}_1$.

As a corollary, we get the following result.

\begin{proposition}\label{prp:comologiatWeyl1}The following hold:

\begin{enumerate}[\indent a)]
\item $\semiinf^n(\tWeyl)=0$ for $n\neq 0$. 
\item The inclusion of $\tWeyl$ in $\Weyl$ induces isomorphisms $$\semiinfz(\tWeyl)[a^{-1}]\simeq \semiinfz(\Weyl)[a^{-1}] \simeq \semiinfz(\mV^\grl_t)\otimes _Q \semiinfz(\mV^\mu_s).$$
\item $\semiinfz(\tWeyl)$ is torsion-free with respect to the action of $A$, and the natural projection induces isomorphisms
$$
\frac{\semiinfz(\tWeyl)}{a\semiinfz(\tWeyl)} \simeq \semiinfz\left(\frac{\tWeyl}{a\tWeyl}\right)\simeq \semiinfz(\mW_1^{\grl,\mu}).
$$
\end{enumerate}
\end{proposition}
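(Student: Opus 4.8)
The plan is to derive all three statements formally from the machinery of Section~\ref{ssec:comologiasemi}---long exact sequences (Lemma~\ref{lem:SEcomologia}~a)) and compatibility of $\semiinf$ with localization and specialisation (Lemma~\ref{lem:SEcomologia}~b), c))---together with the one-point results of Frenkel--Gaitsgory (Theorem~\ref{thm:FG6teo}), the computation for $\Weyl$ (Theorem~\ref{teo:comologiaWeyl2}), and the structural description of $\tWeyl$ in Lemmas~\ref{lem:descrizionetWeyl} and~\ref{lem:SptWeyl}. For part~a) I would argue along the finite filtration $\tWeyl=\mF_0\supseteq\mF_1\supseteq\dots\supseteq\mF_\mu=a^\mu\Weyl$ of~\eqref{eq:filtrazionetWeyl}, proving by downward induction on $i$ that $\semiinf^n(\mF_i)=0$ for $n\neq 0$. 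For $i=\mu$ this holds because multiplication by $a^\mu$ is an isomorphism of $\hU_2$-modules $\Weyl\xrightarrow{\sim}\mF_\mu$ (as $a$ is central and $\Weyl$ is $A$-torsion-free), so the claim is Theorem~\ref{teo:comologiaWeyl2}~a). For the inductive step, the $A$-module description of $\mF_\bullet$ recalled after~\eqref{eq:filtrazionetWeyl} yields $\mF_{i-1}/\mF_i\simeq(a^{i-1}U_2^-/a^iU_2^-)\otimes_\mC F^{i-1}\simeq U_1^-\otimes_\mC F^{i-1}$; since $a$ annihilates this quotient it is an $\hU_1$-module, and unwinding the $\hgog_2$-action exactly as in the proof of Lemma~\ref{lem:SptWeyl} identifies it with the induced module $\Ind_{\hgog_1^+}^{\hgog_1}(F^{i-1})\simeq\bigoplus_{j=0}^{i-1}\mV_1^{\grl+\mu-2j}$. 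By Theorem~\ref{thm:FG6teo} and Lemma~\ref{lem:SEcomologia}~b), $\semiinf^\bullet(\mF_{i-1}/\mF_i)$ is concentrated in degree $0$, so the long exact sequence of $0\to\mF_i\to\mF_{i-1}\to\mF_{i-1}/\mF_i\to 0$ squeezes $\semiinf^n(\mF_{i-1})$ between two vanishing groups for every $n\neq 0$. Taking $i=0$ proves part~a).

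For part~b), inverting $a$ in the inclusions $a^\mu\Weyl\subseteq\tWeyl\subseteq\Weyl$ gives $\tWeyl[a^{-1}]=\Weyl[a^{-1}]$, and since semi-infinite cohomology commutes with localization of smooth modules (Lemma~\ref{lem:SEcomologia}~c)), the inclusion $\tWeyl\hookrightarrow\Weyl$ induces $\semiinfz(\tWeyl)[a^{-1}]\simeq\semiinfz(\tWeyl[a^{-1}])\simeq\semiinfz(\Weyl[a^{-1}])\simeq\semiinfz(\Weyl)[a^{-1}]$; the identification of the last group with $\semiinfz(\mV^\grl_t)\otimes_Q\semiinfz(\mV^\mu_s)$ is the one already obtained through~\eqref{eq:localizzazioneespecializzazionecomplesso}, Lemma~\ref{lem:SEcomologia}~d) and the $\hgog_t$-, $\hgog_s$-analogues of Theorem~\ref{thm:FG6teo}. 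For part~c), I would feed $0\to\tWeyl\xrightarrow{a}\tWeyl\to\tWeyl/a\tWeyl\to 0$ into Lemma~\ref{lem:SEcomologia}~a). Since $\tWeyl/a\tWeyl\simeq\mW_1^{\grl,\mu}$ as $\hU_1$-modules (Lemma~\ref{lem:SptWeyl}~b)) and $\mW_1^{\grl,\mu}\simeq\bigoplus_\nu\mV_1^\nu$, Theorem~\ref{thm:FG6teo} and Lemma~\ref{lem:SEcomologia}~b) give that $\semiinf^\bullet(\tWeyl/a\tWeyl)$ is concentrated in degree $0$, equal to $\semiinfz(\mW_1^{\grl,\mu})$; in particular $\semiinf^{-1}(\tWeyl/a\tWeyl)=0$. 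Combining this with $\semiinf^1(\tWeyl)=0$ from part~a), the long exact sequence collapses to
\[
0\longrightarrow \semiinfz(\tWeyl)\xrightarrow{a}\semiinfz(\tWeyl)\longrightarrow\semiinfz(\mW_1^{\grl,\mu})\longrightarrow 0,
\]
whose exactness gives at once that $\semiinfz(\tWeyl)$ is $a$-torsion-free, hence torsion-free over $A$, and that the projection $\tWeyl\to\tWeyl/a\tWeyl$ induces the asserted isomorphism $\semiinfz(\tWeyl)/a\semiinfz(\tWeyl)\simeq\semiinfz(\mW_1^{\grl,\mu})$.

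The place where real work is hidden is the identification of the associated graded of the filtration $\mF_\bullet$, that is, of the quotients $\mF_{i-1}/\mF_i$, as $\hU_1$-modules of the form $\bigoplus_\nu\mV_1^\nu$; this amounts to controlling the $\hgog_2$-action on $\tWeyl$ modulo $a$, which is precisely what Lemmas~\ref{lem:descrizionetWeyl} and~\ref{lem:SptWeyl} are designed to supply, so once they are in hand the rest is a formal consequence of the long-exact-sequence formalism and the one-point computation. I should also note that it is essential to run the argument through the whole filtration rather than through the single quotient $\tWeyl/a\tWeyl$: the naive multiplication-by-$a$ sequence only shows that $\semiinf^1(\tWeyl)$ is $a$-divisible and $a$-power-torsion, which on its own does not force it to vanish, whereas the filtration argument yields $\semiinf^1(\tWeyl)=0$ directly.
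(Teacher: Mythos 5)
Your proposal is correct and follows essentially the same route as the paper: downward induction along the filtration $\mF_i=\tWeyl\cap a^i\Weyl$ with associated graded pieces $U_1^-\otimes_\mC F^i\simeq \Ind_{\hgog_1^+}^{\hgog_1}F^i$ (a sum of modules $\mV_1^\nu$) for part a), localization for part b), and the multiplication-by-$a$ long exact sequence together with Lemma \ref{lem:SptWeyl} for part c). Your closing remark about why the filtration, rather than the bare multiplication-by-$a$ sequence, is needed for part a) is a sound observation and consistent with how the paper organises the argument.
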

\begin{proof}
	\begin{comment}
Introduce the following decreasing filtration of $\tWeyl$:
$$
\mF_i=\tWeyl \cap a^i \Weyl.
$$
If $F^h(V^\mu)=V^\mu$, then by the previous Lemma we have the following description of this filtration:
$$
\mF_i=U_2^-\otimes \left( a^i F^i\oplus 
\bigoplus _{j=i+1}^h a^j  G^j\right).
$$
In particular we have $\mF_0=\tWeyl$, $\mF_h=a^h\Weyl$ 
\end{comment}
We use the filtration introduced in Equation \eqref{eq:filtrazionetWeyl}. Notice that 
$$
\frac{\mF_i}{\mF_{i+1}}=\frac {a^i U_2^-\otimes F^i}{{a^{i+1} U_2^-\otimes F^i}}\simeq U^-_1\otimes_\mC F^i\simeq \Ind^{\hgog_1}_{\hgog_1^+}F^i,
$$	
where we consider $F^i$ as a $\hgog_1^+$-module on which $t\gog[t]$ acts trivially. Notice that $\Ind^{\hgog_1}_{\hgog_1^+}F^i$ is a sum of modules of the form $\mV^\nu_1$, hence in particular has trivial non-zero cohomology. 

Hence, arguing by decreasing induction on $i$, starting from $i=\mu$, it follows that 
$\mF_i$ has trivial semi-infinite cohomology in degree different from zero. Indeed for $i=\mu$ we have $\mF_\mu=a^\mu\Weyl\simeq \Weyl$
and this is the content of Theorem \ref{teo:comologiaWeyl2}.
For $i=0$ this implies claim a).

Part b) follows from the fact that semi-infinite cohomology commutes with localization (see Lemma \ref{lem:SEcomologia})
combined with the isomorphism $\tWeyl[a^{-1}]=\Weyl[a^{-1}]\simeq \mV^\grl_t\otimes_Q\mV^\mu_s$. 
 
To prove c), consider the exact sequence
$$
\xymatrix{
 0\ar[r]
&\tWeyl \ar[r]^{\cdot a}
&\tWeyl \ar[r]
&\frac{\tWeyl}{a\tWeyl}\ar[r]
&0}
$$
By Lemma \ref{lem:SptWeyl}, the last module in this sequence is isomorphic to $\mW^{\grl,\mu}_1$. In particular, the semi-infinite cohomology groups $\semiinf^n$ of the modules appearing in this sequence are zero for $n \neq 0$, and c) follows. 
\end{proof}

\begin{comment}
Notice that in the previous Lemma we proved that 
$\semiinfz(\tWeyl)|_{a=0}$ is isomorphic as a $Z_1$ module to the $\semiinfz(\mW_1^{\grl,\mu})$, hence to $\semiinfz(\tWeyl)|_{a=0}$. However the natural inclusion 
of $\tWeyl$ in $\Weyl$ does not induce an isomorphism of these modules. Indeed if we represent $\tWeyl|_{a=0} $ as 
$\bigoplus_{i=0}^h a^iU_2^-\otimes G^i / a^{i+1}U_2^-\otimes G^i = \bigoplus_{i=0}^h\Ind^{\hgog_1}_{\hgog_1^+}G^i$
and $\Weyl|_{a=0}$ as  
$\bigoplus_{i=0}^h U_2^-\otimes G^i / aU_2^-\otimes G^i = \bigoplus_{i=0}^h\Ind^{\hgog_1}_{\hgog_1^+}G^i$
we see that for $i>0$ the summand $\Ind^{\hgog_1}_{\hgog_1^+}G^i$ in $\tWeyl$ is the image of something which is divisible by $a$ in the module $\Weyl$. In particular the induced map between the two cohomologies will be an isomorphism for the summand $\semiinfz(\Ind^{\hgog_1}_{\hgog_1^+}G^0)$ and zero for all the other summands. \footnote{Questo paragrafo l'ho messo piu` per noi che per l'articolo}
\end{comment}

To prove that the semi-infinite cohomology of $\tWeyl$ is isomorphic to $Z_2^{\grl,\mu}$
we will use the action of a particular central element in $Z_2$. Recall from \cite{FLMM} the definition of the 2-Sugawara operator
\begin{equation}\label{eq:Sug12}
S^{(2)}_{1/2}= \sum_{n\in\tfrac 12\mZ,b} :\ (J^b w_n)(J_b z_{-n})\ :
\end{equation}
where $J^1, J^2, J^3$ are the basis elements $e, h, f$ and $J_1, J_2, J_3$ are the dual basis elements $f, h/2, e$. As proved in \cite{FLMM}, the element $S^{(2)}_{1/2}$ is central. Its specialisation is the Sugawara operator
\begin{equation}\label{eq:Sug11}
S_1^{(1)} = \sum_{n \in \mZ, b} :\ (J^bt^n)\, (J_bt^{-n})\ :
\end{equation} 
which is an element of $Z_1$. It is straightforward to check that the action of $S_1^{(1)}$ on the Weyl module  $\mV^\nu_1$ is given by multiplication by $\nu(\nu+1)$.

\begin{lemma}\label{lem:calcoloSugawara}
	The element $\hat w_\ell =  \left(et^{-1}\right)^\ell \tilde w_\ell$ belongs to  $Z_2 \cdot (v_\grl\otimes v_\mu) + a\tWeyl$ for $\ell=0,\dots,\mu$,
\end{lemma}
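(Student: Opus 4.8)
The plan is to prove the statement by induction on $\ell$, using the central Sugawara element $S:=S^{(2)}_{1/2}$ of \eqref{eq:Sug12}. The case $\ell=0$ is trivial, as $\hat w_0=v_\grl\otimes v_\mu=1\cdot(v_\grl\otimes v_\mu)$. For the inductive step, the key observation is that, since $S$ is central in $\hU_2$ and $\tilde w_k=a^k u_k$, one has
$$
S\cdot\hat w_k=S\cdot\big(a^k(et^{-1})^k u_k\big)=a^k(et^{-1})^k\big(S\cdot u_k\big),
$$
so everything reduces to a single, essentially finite, computation of the action of $S$ on the vector $u_k\in V^\grl\otimes_\mC V^\mu\subset\Weyl$.

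I would first record the elementary facts about this action, all implicit in the proof of Lemma \ref{lem:descrizionetWeyl}: on $A\otimes_\mC V^\grl\otimes_\mC V^\mu$ an element $g(t,s)x$ with $g$ divisible by $ts$ acts trivially, $z_0=1$ acts by the diagonal $\gog$-action, $z_{1/2}=t$ acts by $a$ times the action on the second tensor factor, and $w_{1/2}=s$ acts by $-a$ times the action on the first factor. Hence, when $S=\sum_{n,b}:(J^bw_n)(J_bz_{-n}):$ is applied to $u_k$, only the summands with $n\in\{0,\pm\tfrac12\}$ survive: the $n=0$ summand contributes $\lambda_k u_k$, where $\lambda_k=(\grl+\mu-2k)(\grl+\mu-2k+1)$ is the Casimir scalar on the irreducible $\gog$-module $G^k$ (this is the finite-part computation underlying $\Sp(S)=S^{(1)}_1$ acting by $\nu(\nu+1)$ on $\mV^\nu_1$), while the $n=\pm\tfrac12$ summands contribute terms each carrying a factor $a$. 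I would also use the identity $e s^{-1}=et^{-1}+a\,(et^{-1}s^{-1})$, coming from $s^{-1}=t^{-1}+a\,t^{-1}s^{-1}$, to rewrite contributions involving $s^{-1}=z_{-1/2}$ in terms of $t^{-1}=w_{-1/2}$.

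Carrying out the computation, one isolates the component along $u_{k+1}$. Using the $\gos\gol(2)$ Clebsch–Gordan decomposition, write $(1\otimes f)u_k=c_{k+1}u_{k+1}+r'$ and $(f\otimes1)u_k=c'_{k+1}u_{k+1}+r''$ with $r',r''$ supported on $G^0\oplus\dots\oplus G^k$; since $f\cdot u_k=(f\otimes1+1\otimes f)u_k$ lies in $G^k$, one gets $c'_{k+1}=-c_{k+1}$. Tracking the $n=\pm\tfrac12$ terms then yields
$$
S\cdot u_k=\lambda_k u_k+2a\,c_{k+1}(et^{-1})u_{k+1}+a^2 c_{k+1}(et^{-1}s^{-1})u_{k+1}+a\,R_k,
$$
where $R_k$ is a sum of terms (an element of $U_2^-$)$\,\otimes\,$(an element of $G^0\oplus\dots\oplus G^k$). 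Applying $a^k(et^{-1})^k$ and recalling $\hat w_{k+1}=a^{k+1}(et^{-1})^{k+1}u_{k+1}$, the first two terms give $\lambda_k\hat w_k+2c_{k+1}\hat w_{k+1}$, while the other two lie in $a\tWeyl$: indeed, by the description $\tWeyl=\bigoplus_{i=0}^\mu a^iU_2^-\otimes_\mC G^i$ of Lemma \ref{lem:descrizionetWeyl}, a term in $a^m U_2^-\otimes G^i$ with $m\ge i+1$ lies in the $G^i$-summand $a^{i+1}U_2^-\otimes G^i$ of $a\tWeyl$, and here $m=k+2\ge(k+1)+1$ for the third term, and $m=k+1\ge i+1$ for each summand of $a^{k+1}(et^{-1})^kR_k$ since $i\le k$. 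Therefore
$$
S\cdot\hat w_k\equiv\lambda_k\,\hat w_k+2c_{k+1}\,\hat w_{k+1}\pmod{a\tWeyl},\qquad 0\le k\le\mu,
$$
with the convention $\hat w_{\mu+1}=0$.

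To finish, assume inductively that $\hat w_k\in Z_2\cdot(v_\grl\otimes v_\mu)+a\tWeyl$. Since $S\in Z_2$ and $a\tWeyl$ is $\hU_2$-stable, the module $Z_2\cdot(v_\grl\otimes v_\mu)+a\tWeyl$ is preserved by $S$, so $S\cdot\hat w_k$ lies in it; combined with the displayed congruence, $2c_{k+1}\hat w_{k+1}\in Z_2\cdot(v_\grl\otimes v_\mu)+a\tWeyl$. It then remains to show $c_{k+1}\ne0$ for $0\le k\le\mu-1$; this is the non-vanishing of a $\gos\gol(2)$ Clebsch–Gordan coefficient, which can be checked by comparing the coefficient of the basis vector $v_\grl\otimes f^{k+1}v_\mu$ on both sides of $(1\otimes f)u_k=c_{k+1}u_{k+1}+r'$, and it is exactly the hypothesis $\grl\ge\mu$ that guarantees all of $G^0,\dots,G^\mu$ occur (the case $\mu=0$ being vacuous, since then $\hat w_0=v_\grl\otimes v_\mu$ is the only vector involved). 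Granting this, $\hat w_{k+1}\in Z_2\cdot(v_\grl\otimes v_\mu)+a\tWeyl$, completing the induction. I expect the delicate points to be precisely this Clebsch–Gordan bookkeeping and the careful tracking of $a$-divisibility through the two bases $\{z_m\}$, $\{w_m\}$ and the filtration $\tWeyl=\bigoplus_i a^iU_2^-\otimes_\mC G^i$.
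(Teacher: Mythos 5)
Your overall strategy is the same as the paper's: induct on $\ell$, apply the central element $S^{(2)}_{1/2}$, and show that modulo $a\tWeyl$ it sends $\hat w_\ell$ to a nonzero multiple of $\hat w_{\ell+1}$ plus a multiple of $\hat w_\ell$. Your bookkeeping is essentially correct: only the summands with $n\in\{0,\pm\tfrac12\}$ survive, the identity $s^{-1}=t^{-1}+a\,t^{-1}s^{-1}$ is right, and the $a$-divisibility estimates against the decomposition $\tWeyl=\bigoplus_i a^iU_2^-\otimes_\mC G^i$ are sound. One point you should make explicit: the claim that $R_k$ lies in $U_2^-\otimes(G^0\oplus\cdots\oplus G^k)$ requires observing that for $J_b\in\{e,h\}$ the vector $(1\otimes J_b)u_k$ has weight $\geq \grl+\mu-2k$, which exceeds the highest weight of $G^{k+1}$, hence has zero component there; only $1\otimes f$ and $f\otimes 1$ can reach $G^{k+1}$, and only along its highest weight line. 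This is easy but it is exactly what makes your congruence close up.

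The genuine gap is your justification of $c_{k+1}\neq 0$. Comparing the coefficient of $v_\grl\otimes f^{k+1}v_\mu$ on both sides of $(1\otimes f)u_k=c_{k+1}u_{k+1}+r'$ proves nothing, because the remainder $r'\in F^k$ can itself have a nonzero coefficient along that basis vector: for instance $f^{k+1}\cdot(v_\grl\otimes v_\mu)\in F^0\cdot U(\gog)\subset F^k$ has coefficient $1$ there. So the comparison does not determine $c_{k+1}$. The paper sidesteps this entirely by running the induction on the vectors $a^\ell(et^{-1})^\ell(v_\grl\otimes f^\ell v_\mu)$ rather than on $a^\ell(et^{-1})^\ell u_\ell$: since $ft$ acts on $A\otimes_\mC V^\grl\otimes_\mC V^\mu$ as $a(1\otimes f)$, the new term is produced with coefficient exactly $1$, and one only needs to know once that $v_\grl\otimes f^\ell v_\mu\equiv(\text{nonzero const})\,u_\ell \bmod F^{\ell-1}$, which follows from $v_\grl\otimes f^\ell v_\mu\notin F^{\ell-1}$ (a weight-space dimension count, using $\grl\geq\mu$). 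If you want to keep your formulation, the clean way to repair the gap is to deduce $c_{k+1}\neq 0$ from that same fact: writing $v_\grl\otimes f^kv_\mu=\beta u_k+u'$ with $\beta\neq 0$ and $u'\in F^{k-1}$, and letting $\pi_{k+1}$ denote the $\gog$-equivariant projection onto $G^{k+1}$, one has
$$
\pi_{k+1}\bigl(v_\grl\otimes f^{k+1}v_\mu\bigr)=\pi_{k+1}\bigl(\beta(1\otimes f)u_k+(1\otimes f)u'\bigr)=\beta\, c_{k+1}u_{k+1},
$$
because $(1\otimes f)u'\in F^k$; since the left-hand side is nonzero, $c_{k+1}\neq 0$. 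With this repair your argument is complete and agrees with the paper's in all essential respects.
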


\begin{proof}
	We notice first that the element $v_\grl\otimes f^\ell v_\mu$ belongs to $F^\ell\setminus F^{\ell-1}$ and has weight $\grl+\mu-2\ell$. Hence, up to a non-zero constant we have
	$v_\grl\otimes f^\ell v_\mu =u_\ell + u'_\ell$, where we recall that $u_\ell$ is the highest weight vector in $G^\ell\simeq V^{\grl+\mu-2\ell}\subset V^\grl\otimes V^\mu$ and $u'_\ell\in F^{\ell-1}$. In particular, recall from Lemma \ref{lem:SptWeyl} that $a^{\ell-1}F^\ell\subset \tWeyl$, hence
	$$
	a^\ell \left(et^{-1}\right)^\ell v_\grl\otimes f^\ell v_\mu = 
	\left(et^{-1}\right)^\ell \tilde w_\ell +\left(et^{-1}\right)^\ell (a^\ell u'_\ell)\equiv  	\left(et^{-1}\right)^\ell \tilde w_\ell \; \mod a\tWeyl.
	$$
	Hence, the lemma is equivalent to the fact that $\hat w_\ell=	a^\ell \left(et^{-1}\right)^\ell v_\grl\otimes f^\ell v_\mu$ is in $Z_2\cdot v_\grl\otimes v_\mu+a\tWeyl$. We prove this statement by induction on $\ell$. For $\ell=0$ it is trivially true. Now assume $\hat w_\ell$ is in $Z_2 \cdot v_\grl\otimes v_\mu + a\tWeyl$. We compute $ S^{(2)}_{1/2} (\hat w_\ell)$. In order to do this, we notice that the action of $xt^is^j$ on $\tWeyl/a\tWeyl$ is equal to the action of $xt^{i+j}$ on the same module, and that $ v_\grl\otimes e\,f^\ell v_\mu$ is in $F^{\ell-1}$. We have
    \begin{align*}
    S^{(2)}_{1/2} \hat w_\ell =\;  
      &2\sum _{n>0}  et^{-n}\cdot ft^{n} \cdot \hat w_\ell
    +2\sum _{n>0} ft^{-n}\cdot et^{n} \cdot \hat w_\ell
    + \sum _{n>0} ht^{-n}\cdot ht^{n} \cdot \hat w_\ell\\
    & +  e\cdot f\cdot \hat w_\ell+e\cdot f\cdot \hat w_\ell+\frac12h\cdot h \cdot \hat w_\ell.
    \end{align*}
    In the second infinite sum above, the element $et^n$ commutes with $et^{-1}$, hence $et^{n} \cdot \hat w_\ell\in a\tWeyl$ for all $n>0$. The summands of the third series are of the form
    $$
    ht^n\cdot (et^{-1})^\ell\cdot \hat w_\ell= 
    (et^{-1})^\ell ht^n \cdot \hat w_\ell +2
    \ell (et^{-1})^{\ell-1} et^{n-1}\cdot \hat w_\ell,
    $$
    hence they vanish for $n\geq 3$, while for $n=1,2$ they are easily checked to be elements of $a\tWeyl$.
The summands of the first series are given by
    $$
ft^n\cdot (et^{-1})^\ell\cdot \hat w_\ell= 
(et^{-1})^\ell ft^n \cdot \hat w_\ell -
\ell (et^{-1})^{\ell-1} ht^{n-1}\cdot \hat w_\ell 
-\ell (\ell-1) (et^{-1})^{\ell-2} et^{n-2}\cdot \hat w_\ell,
$$
and all terms are zero or in $a\tWeyl$ but for the case $n=1$, for which we get 
\begin{align*}
(et^{-1})& \cdot (ft)\cdot (et^{-1})^\ell\cdot \hat w_\ell =
a^{\ell+1}(et^{-1})^{\ell+1} \cdot  (v_\grl\otimes f^{\ell+1}v_\mu) \\
&- \ell (et^{-1})^{\ell} h \cdot (v_\grl\otimes f^{\ell}v_\mu)
-\ell (\ell-1) (et^{-1})^{\ell} \cdot \hat w_\ell = \hat w_{\ell+1}+ K_1\hat w _\ell
\end{align*}
for some constant $K_1$. Finally,
$e\cdot f\cdot \hat w_\ell+e\cdot f\cdot \hat w_\ell+\frac12h\cdot h \cdot \hat w_\ell$    
belongs to  $K_2\hat w_\ell+a\tWeyl$ for some constant $K_2$. Hence we get 
$$
  S^{(2)}_{1/2} \hat w_\ell \equiv \hat w _{\ell+1}+K \hat w_\ell \bmod a\tWeyl 
$$
for some constant $K$, proving our claim. 
\end{proof}

We now prove that the zero-th semi-infinite cohomology of the module $\tWeyl$ is isomorphic to $Z_2^{\grl,\mu}$.

\begin{theorem}\label{teo:comologiatWeyl2} For $\gog=\gos\gol(2)$ the map $\Phi:Z_2^{\grl,\mu}\lra \semiinfz\big(\tWeyl\big)$ given by  $\Phi(z)= z\cdot [v_\grl\otimes v_\mu]$ is an isomorphism.
\end{theorem}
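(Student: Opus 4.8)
The plan is to deduce that $\Phi$ is an isomorphism from Lemma~\ref{lem:isoMN}: I will check that $\Phi$ is a well-defined morphism of flat $A$-modules, that its localisation $\Phi_a$ at $a$ is an isomorphism, and that its reduction $\overline{\Phi}$ modulo $a$ is injective. For well-definedness, first note that $v_\grl\otimes v_\mu\otimes\statovuotoFd$ is a cocycle in $\tWeyl\otimes_A\Fock^\bullet_2$: with the explicit differential $d^{(2)}=\psi^*+\sum_n e\,w_n\otimes\psi^*z_{-n-1/2}$ valid for $\gos\gol(2)$ when $\grl\geq\mu$, the terms $(e\,w_n)(v_\grl\otimes v_\mu)$ vanish for $n\geq 0$ (because $e$ kills the highest weight vector and $w_n\in ts\,R_2$ acts trivially for $n\geq 1$), the operators $\psi^*z_{-n-1/2}$ annihilate $\statovuotoFd$ for $n\leq -1$ (then $z_{-n-1/2}\in R_2$), and $\psi^*$ annihilates $\statovuotoFd$ as well. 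Next, by \cite{FLMM} the action of $Z_2$ on $\Weyl$ factors through $\End_{\hg_2}(\Weyl)\simeq Z_2^{\grl,\mu}$, hence so does the induced action on the submodule $\tWeyl$, on $\tWeyl\otimes_A\Fock_2^\bullet$, and on its cohomology; thus $\Phi$ is a well-defined map of $Z_2^{\grl,\mu}$-modules. Finally, $\semiinfz(\tWeyl)$ is $A$-torsion-free by Proposition~\ref{prp:comologiatWeyl1}~c), and $Z_2^{\grl,\mu}$ is $A$-flat by \cite{FLMM}.

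For the localisation, Proposition~\ref{prp:comologiatWeyl1}~b) together with \eqref{eq:SEWeyl} gives $\semiinfz(\tWeyl)[a^{-1}]\simeq\semiinfz(\mV^\grl_t)\otimes_Q\semiinfz(\mV^\mu_s)$, compatibly with $v_\grl\otimes v_\mu\mapsto v_\grl\otimes v_\mu$; and the factorisation property of unramified opers (\cite{FLMM}) gives $Z_2^{\grl,\mu}[a^{-1}]\simeq Z_t^\grl\otimes_Q Z_s^\mu$. Under these identifications $\Phi_a$ becomes the tensor product of the maps $z\mapsto z\cdot[v_\grl]$ and $z\mapsto z\cdot[v_\mu]$, which are isomorphisms by Theorem~\ref{thm:FG6teo} applied to $\hg_t$ and $\hg_s$. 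Hence $\Phi_a$ is an isomorphism, and by Lemma~\ref{lem:isoMN}~a) $\Phi$ is already injective.

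The main work is the reduction modulo $a$. By Proposition~\ref{prp:comologiatWeyl1}~c), $\semiinfz(\tWeyl)/a\simeq\semiinfz(\mW_1^{\grl,\mu})\simeq\bigoplus_{\ell=0}^\mu\semiinfz(\mV_1^{\grl+\mu-2\ell})$, each summand being free of rank one over $Z_1^{\grl+\mu-2\ell}$ by Theorem~\ref{thm:FG6teo}; and (using \cite{FLMM} and Feigin--Frenkel) $Z_2^{\grl,\mu}/a$ is the coordinate ring of the special fibre of $\Op_2^{\grl,\mu}$, which is the reduced scheme $\bigsqcup_{\ell=0}^\mu\Op_1^{\grl+\mu-2\ell}$, so $Z_2^{\grl,\mu}/a\simeq\bigoplus_{\ell=0}^\mu Z_1^{\grl+\mu-2\ell}$. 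As the weights $\grl+\mu-2\ell$ are pairwise distinct, both decompositions are cut out by idempotents pulled back from $Z_1$ (Remark~\ref{rmk:Opnudisgiunti}), so $\overline{\Phi}$ respects them and it suffices to show $\overline{\Phi}$ is surjective onto each summand $\semiinfz(\mV_1^{\grl+\mu-2\ell})$. This is where Lemma~\ref{lem:calcoloSugawara} is used: it provides $z_\ell\in Z_2$ with $z_\ell\cdot(v_\grl\otimes v_\mu)\equiv\hat w_\ell=(e t^{-1})^\ell\tilde w_\ell\pmod{a\tWeyl}$, so $\overline{\Phi}(\bar z_\ell)$ is the class of $\hat w_\ell\otimes\statovuotoFd$ in $\semiinfz(\mW_1^{\grl,\mu})$. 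Under $\tWeyl/a\tWeyl\simeq\bigoplus_\ell\mV_1^{\grl+\mu-2\ell}$, the vector $\hat w_\ell$ lies in the $\ell$-th summand (since $\tilde w_\ell$ generates it by Lemma~\ref{lem:SptWeyl}~b) and $e t^{-1}$ preserves the summands), and comparing leading terms with respect to the conformal grading one sees that its cohomology class is a non-zero multiple of the canonical generator $[v_{\grl+\mu-2\ell}\otimes\statovuotoF]$ of $\semiinfz(\mV_1^{\grl+\mu-2\ell})$. Hence $\overline{\Phi}$ surjects onto each summand, so $\overline{\Phi}$ is an isomorphism; in particular it is injective.

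Combining the three points, Lemma~\ref{lem:isoMN} yields that $\Phi$ is an isomorphism. I expect the reduction modulo $a$ to be the genuine obstacle: it rests both on the (reduced) description of the special fibre of $\Op_2^{\grl,\mu}$ and, above all, on the input of Lemma~\ref{lem:calcoloSugawara}, which is precisely what makes the $Z_2$-orbit of $v_\grl\otimes v_\mu$ reach a generator of every summand of $\semiinfz(\mW_1^{\grl,\mu})$ --- the phenomenon that fails for $\mV_2^{\grl,\mu}$ itself and is the whole reason for passing to $\tWeyl$.
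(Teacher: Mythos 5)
Your overall strategy is the same as the paper's: reduce to the localisation and the specialisation via Lemma \ref{lem:isoMN} and torsion-freeness, handle the localisation with Proposition \ref{prp:comologiatWeyl1}~b) and Theorem \ref{thm:FG6teo}, decompose the special fibre using \cite[Theorem 2.13]{FLMM} and Remark \ref{rmk:Opnudisgiunti}, and feed in Lemma \ref{lem:calcoloSugawara} to show that the $Z_2$-orbit of $v_\grl\otimes v_\mu$ reaches every summand. All of that is correct and is exactly how the paper proceeds.

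The one genuine gap is the final step, where you assert that the class of $\hat w_\ell=(et^{-1})^\ell\tilde w_\ell$ in $\semiinfz(\mV_1^{\grl+\mu-2\ell})$ is a non-zero multiple of the generator $[\tilde w_\ell]$ ``by comparing leading terms with respect to the conformal grading''. Two problems. First, for surjectivity you need a non-zero \emph{scalar} multiple (a unit), not merely a non-zero element of $Z_1^{\grl+\mu-2\ell}\cdot[\tilde w_\ell]$. Second, and more seriously, no grading comparison can prove this: the differential $d^{(1)}=d_{\std}^{(1)}+\chi^{(1)}$ does \emph{not} preserve the conformal grading (only $d_{\std}^{(1)}$ does; $\chi^{(1)}$ shifts it), and it is precisely this failure that allows $[(et^{-1})^\ell v_\nu]$, which lives in conformal degree $\ell$, to coincide up to sign with $[v_\nu]$, which lives in degree $0$. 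A grading-preserving differential would instead force $[(et^{-1})^\ell v_\nu]=z\cdot[v_\nu]$ with $z$ homogeneous of positive degree --- exactly the possibility you must exclude. The paper closes this point with an explicit coboundary computation: for $\gog=\gos\gol(2)$,
$$
d^{(1)}\left( \big(et^{-1}\big)^{h-1}  v_\nu\otimes (\psi t^{-1})\statovuotoF \right)=
\big(et^{-1}\big)^{h-1}  v_\nu\otimes \statovuotoF +
\big(et^{-1}\big)^{h}  v_\nu\otimes \statovuotoF,
$$
whence $[(et^{-1})^\ell v_\nu]=(-1)^\ell[v_\nu]$ by induction. You need this computation (or an equivalent one) to finish; without it the surjectivity of $\overline\Phi$ onto each summand is not established.
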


\begin{proof}
By \cite{FLMM}, Theorem 6.4, the action of $Z_2$ on $\Weyl$, hence on $\tWeyl$, factors through $Z_2^{\grl,\mu}$. Moreover $v_\grl\otimes v_\mu$ is a cycle, so the map $\Phi$ is well defined. 
Since we know that both modules are torsion-free, to prove that $\Phi$  is an isomorphism it suffices to prove that the localization $\Phi_a$ and the specialisation 
$\overline{\Phi}$  are isomorphisms. 

The fact that $\Phi_a$ is an isomorphism is the content of part b) of Proposition \ref{prp:comologiatWeyl1}. 

We need to prove that $\overline{\Phi}$ is an isomorphism. By Lemma \ref{lem:SptWeyl}, Proposition \ref{prp:comologiatWeyl1} and \cite[Theorem 2.13]{FLMM} we have
$$
\frac{Z^{\grl,\mu}_2}{aZ^{\grl,\mu}_2}
\simeq  \prod_{i=0}^\mu Z_1^{\grl+\mu-2i}
\qquad \text{and} \qquad
\frac{\semiinfz(\tWeyl)}{a\semiinfz(\tWeyl)} \simeq 
\bigoplus_{i=0}^{\mu} \semiinfz (\mV_1^{\grl+\mu-2i}).
$$ 
In particular, by Theorem \ref{thm:FG6teo} these two $Z_1$-modules are isomorphic, but we need to prove that our specific map $\overline{\Phi}$ provides an isomorphism between them. By Remark \ref{rmk:Opnudisgiunti} it is enough to prove that $\overline{\Phi}$ is surjective. We prove that the image of $\overline{\Phi}$ contains $\semiinfz (\mF_\ell+a\tWeyl/a\tWeyl)$  arguing by reverse induction on $\ell$. For $\ell=0$ we get our claim. For $\ell>\mu$ there is nothing to prove. Now assume $\ell\leq \mu$. 
Consider again the exact sequence
$$
	\xymatrix@1{
	0\ar[r]
	&\frac{\mF_{\ell+1}+a\tWeyl}{a\tWeyl}\ar[r]
	&\frac{\mF_{\ell}+a\tWeyl}{a\tWeyl}\ar[r]
	& a^\ell U^-_2\otimes_{\mC} G^\ell\ar[r]
	&0.
}
$$ 
We know that the last module is isomorphic to 
\[
a^\ell U^-_2\otimes_{\mC} G^\ell \simeq \mV_1^{\grl+\mu-2\ell}=\Ind_{\hgog_1^+}^{\hgog_1}(V^{\grl+\mu-2\ell})
\]
and that
it is generated by the element $\tilde w_\ell\in a^\ell G^\ell$. Notice this sequence of $Z_1$-modules splits by Remark \ref{rmk:Opnudisgiunti}. Taking semi-infinite cohomology we get a short exact sequence
$$
\xymatrix@1{
	0\ar[r]
	&\semiinfz\left(\frac{\mF_{\ell+1}+a\tWeyl}{a\tWeyl}\right)\ar[r]
	&\semiinfz\left(\frac{\mF_{\ell}+a\tWeyl}{a\tWeyl}\right)\ar[r]
	&\semiinfz\left( a^\ell U^-_2\otimes_{\mC} G^\ell\right)\ar[r]
	&0.
}
$$ 
and we know that the last $Z_2$-module is generated by $\tilde w_\ell$. Hence it is enough to prove that this element is in the image of $Z_{2}^{\grl,\mu}(v_\grl\otimes v_\mu)$ in $\semiinfz\Big(\tWeyl/\mF_{\ell+1}+a\tWeyl\Big)$. By Lemma \ref{lem:calcoloSugawara} we know that $\hat w_\ell$ is in this image. Now we prove that $\tilde w_\ell$ and $\hat w_\ell$  define the same element in the semi-infinite cohomology of $a^\ell U^-_2\otimes_{\mC} G^\ell$. 
 This is  a claim about the cohomology of the module $\mV_1^\nu$ for $\nu=\grl+\mu-2\ell$. For any $\nu$ we prove that 
$\big(et^{-1}\big)^h  v_\nu+\big(et^{-1}\big)^{h-1} v_\nu$ is a coboundary. 
Indeed the boundary operator in the case of $\gos\gol(2)$ is equal to 
$$
d^{(1)}=\psi^*+\sum_{n\in\mZ}(et^n)\otimes \psi^*t^{-1-n},
$$
so a simple computation shows
$$
d^{(1)} \left( \big(et^{-1}\big)^{h-1}  v_\nu\otimes (\psi t^{-1})\statovuotoF \right)=
\big(et^{-1}\big)^{h-1}  v_\nu\otimes \statovuotoF +
\big(et^{-1}\big)^{h}  v_\nu\otimes \statovuotoF,
$$
which implies our claim.
\end{proof}

Recall that in \cite{FLMM} we computed the endomorphism ring of $\Weyl$, showing that it is isomorphic to $Z^{\grl,\mu}_2$. We now prove the same result for the module $\tWeyl$.

\begin{proposition}\label{prp:endomorphismVtilde}
The action of the center $Z_2$ on $\tWeyl$ induces an isomorphism 
$$
Z^{\grl,\mu}_2\simeq \End_{\hgog_2}(\tWeyl).
$$	

\end{proposition}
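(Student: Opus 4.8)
The plan is to compare the action homomorphism $\Psi\colon Z_2^{\grl,\mu}\to\End_{\hgog_2}(\tWeyl)$, $\Psi(z)=\big(x\mapsto z\cdot x\big)$, with the isomorphism $\Phi$ of Theorem \ref{teo:comologiatWeyl2}, and to conclude by Lemma \ref{lem:isoMN}. The map $\Psi$ is well defined because $\tWeyl$ is a $\hU_2$-submodule of $\Weyl$ and the action of $Z_2$ on $\Weyl$ factors through $Z_2^{\grl,\mu}$ by \cite[Theorem 6.4]{FLMM}. Moreover $\tWeyl$ is $A$-torsion-free, being a submodule of the $A$-free module $\Weyl$ (Section \ref{sec:basi}); hence $\End_{\hgog_2}(\tWeyl)$ is $A$-torsion-free, and, applying the same argument to $\Weyl$ and using the computation $\End_{\hgog_2}(\Weyl)\simeq Z_2^{\grl,\mu}$ of \cite{FLMM}, so is $Z_2^{\grl,\mu}$. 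Since $A$ is a discrete valuation ring, both are flat $A$-modules. It then suffices, by Lemma \ref{lem:isoMN}, to check that $\Psi_a$ is an isomorphism and that $\overline\Psi$ is injective.

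First I would treat the localization. By Lemma \ref{lem:descrizionetWeyl} (and $F^\mu=V^\grl\otimes V^\mu$) we have $a^\mu\Weyl\subseteq\tWeyl\subseteq\Weyl$, so $\tWeyl[a^{-1}]=\Weyl[a^{-1}]$. Since $\tWeyl$ is generated over $\hU_2$ by the single vector $v_\grl\otimes v_\mu$, any $\hU_{t,s}$-equivariant endomorphism of $\tWeyl[a^{-1}]$ sends $v_\grl\otimes v_\mu$ into $\tWeyl[a^{-1}]$ and hence, after multiplying by a suitable power of $a$, restricts to a ($\hU_2$-equivariant) endomorphism of $\tWeyl$; combined with the density of the image of $\hU_2[a^{-1}]$ in $\hU_{t,s}$ this gives $\End_{\hgog_2}(\tWeyl)[a^{-1}]\simeq\End_{\hgog_{t,s}}(\tWeyl[a^{-1}])$, and likewise $\End_{\hgog_2}(\Weyl)[a^{-1}]\simeq\End_{\hgog_{t,s}}(\Weyl[a^{-1}])$. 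Under these identifications the localization of the restriction map $\End_{\hgog_2}(\Weyl)\to\End_{\hgog_2}(\tWeyl)$ — which is defined because every endomorphism of $\Weyl$ is the action of a central element and therefore preserves the submodule $\tWeyl$ — becomes the identity map of $\End_{\hgog_{t,s}}(\Weyl[a^{-1}])$. Since $\Psi$ equals this restriction map composed with the isomorphism $Z_2^{\grl,\mu}\simeq\End_{\hgog_2}(\Weyl)$ of \cite{FLMM}, the localization $\Psi_a$ is an isomorphism.

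It remains to show that $\overline\Psi\colon Z_2^{\grl,\mu}/aZ_2^{\grl,\mu}\to\End_{\hgog_2}(\tWeyl)/a\End_{\hgog_2}(\tWeyl)$ is injective, and here I would use $\Phi$. Any $\phi\in\End_{\hgog_2}(\tWeyl)$ is automatically continuous (the $\hgog_2$-action being smooth), so $\phi\otimes\Id_{\Fock^\bullet_2}$ commutes with $d^{(2)}$ and is a chain map on $\tWeyl\otimes_A\Fock^\bullet_2$; since $v_\grl\otimes v_\mu\otimes\statovuotoFd$ is a cocycle (proof of Theorem \ref{teo:comologiatWeyl2}), the rule $\phi\mapsto\big[\phi(v_\grl\otimes v_\mu)\otimes\statovuotoFd\big]$ defines a $Z_2$-linear map $\beta\colon\End_{\hgog_2}(\tWeyl)\to\semiinfz(\tWeyl)$ with $\beta\circ\Psi=\Phi$. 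Reducing modulo $a$ and using that $\Phi$ is an isomorphism (Theorem \ref{teo:comologiatWeyl2}), the composite $\overline\beta\circ\overline\Psi=\overline\Phi$ is injective, hence so is $\overline\Psi$. Now Lemma \ref{lem:isoMN}, applied with $M=Z_2^{\grl,\mu}$ and $N=\End_{\hgog_2}(\tWeyl)$ both flat over $A$, $\Psi_a$ an isomorphism and $\overline\Psi$ injective, yields that $\Psi$ is an isomorphism of $A$-modules, and it is clearly a ring homomorphism.

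I expect the only genuinely delicate point to be the identification $\End_{\hgog_2}(\tWeyl)[a^{-1}]\simeq\End_{\hgog_{t,s}}(\tWeyl[a^{-1}])$ (and its analogue for $\Weyl$), which relies on $\tWeyl$ being cyclic over $\hU_2$ and on the density of the image of $\hU_2[a^{-1}]$ in $\hU_{t,s}$; the remaining steps are essentially bookkeeping with Theorem \ref{teo:comologiatWeyl2}, Lemma \ref{lem:descrizionetWeyl}, and the computation of $\End_{\hgog_2}(\Weyl)$ in \cite{FLMM}.
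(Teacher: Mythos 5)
Your proof is correct, and while its skeleton (both modules are $A$-flat, so by Lemma \ref{lem:isoMN} it suffices to control the localization and the specialisation) coincides with the paper's, both halves are executed differently. For the localization, the paper identifies $\End_{\hgog_2}(\tWeyl)[a^{-1}]$ directly with $\End_{\hgog_{t,s}}(\mV^\grl_t\otimes_Q\mV^\mu_s)\simeq Z_t^{\grl}\otimes_Q Z_s^{\mu}\simeq Z_2^{\grl,\mu}[a^{-1}]$, whereas you route through the restriction map from $\End_{\hgog_2}(\Weyl)\simeq Z_2^{\grl,\mu}$ of \cite{FLMM} and the equality $\tWeyl[a^{-1}]=\Weyl[a^{-1}]$; both arguments rest on the same cyclicity/density facts and are equally valid. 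The more substantive divergence is at $a=0$: the paper proves the stronger statement that $\overline{\gra}$ is an \emph{isomorphism}, by embedding $\End_{\hgog_2}(\tWeyl)/a\End_{\hgog_2}(\tWeyl)$ into $\End_{\hgog_1}(\tWeyl/a\tWeyl)\simeq\prod_i Z_1^{\grl+\mu-2i}$ and invoking Remark \ref{rmk:Opnudisgiunti}\,(3) together with $\gra(1)=1$; you instead prove only the injectivity that Lemma \ref{lem:isoMN} actually requires, by factoring $\Phi=\beta\circ\Psi$ through the evaluation map $\beta(\phi)=[\phi(v_\grl\otimes v_\mu)\otimes\statovuotoFd]$ and using that $\overline\Phi$ is injective by Theorem \ref{teo:comologiatWeyl2}. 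Your route is shorter and avoids re-using \cite[Theorem 2.13]{FLMM} and Remark \ref{rmk:Opnudisgiunti} at this point, at the price of leaning on the full strength of Theorem \ref{teo:comologiatWeyl2} (which is proved beforehand, so there is no circularity); the one point you should make explicit in a final write-up is the verification that $\phi\otimes\Id_{\Fock_2^\bullet}$ commutes with $d^{(2)}$ --- this holds because $d^{(2)}$ is built from elements of $\hgog_2\hat\otimes_A\Cl_2$ acting locally finitely on the smooth module $\tWeyl\otimes_A\Fock_2^\bullet$, so a $\hgog_2$-equivariant $\phi$ commutes with it term by term.
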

\begin{proof}
We already recalled at the beginning of the proof of Theorem \ref{teo:comologiatWeyl2} that the action of  $Z_2$ on $\tWeyl$ factors through $Z^{\grl,\mu}_2$. We denote by $\gra:Z_2^{\grl,\mu}\lra \End(\tWeyl)$ this action. Since both modules have no $A$-torsion, in order to prove that $\gra$ is an isomorphism it suffices to show that its localization and 
its specialisation are isomorphisms. Moreover, since our modules are finitely generated and have no torsion we have
\begin{align*}
\End_{\hgog_2}\left(\tWeyl\right)[a^{-1}]&\simeq 
\End_{\hgog_2[a^{-1}]}\left(\tWeyl[a^{-1}]\right) 
\simeq 
\End_{\hgog_{t,s}}\left(\mV^\grl\otimes_Q\mV^\mu_s\right)\\&\simeq Z_t^{\grl}\otimes _Q Z_t^{\mu}\simeq Z_2^{\grl,\mu}[a^{-1}],
\end{align*}
hence the localization of $\gra$ is an isomorphism. 

Finally, we prove that the specialisation of $\alpha$ is also an isomorphism. We have already recalled that by \cite[Theorem 2.13]{FLMM} we have $Z_2^{\grl,\mu}/a Z_2^{\grl,\mu}\simeq \prod_{i=0}^\mu Z_1^{\grl+\mu-2i}$.  Hence
by Theorem \ref{thm:FG6teo} we have the following abstract isomorphisms of $Z_1$-modules:
$$
\frac{Z^{\grl,\mu}_2}{aZ^{\grl,\mu}_2}\simeq \prod_{i=0}^\mu Z_1^{\grl+\mu-2i}\simeq\prod_{i=0}^\mu \End_{\hgog_1}(\mV_1^{\grl+\mu-2i}).
$$ 
Moreover, since $\tWeyl$ has no nontrivial $A$-torsion, by Lemma \ref{lem:SptWeyl} and Remark \ref{rmk:Opnudisgiunti} part (1) we have the inclusion
$$
\frac{\End_{\hgog_1}\left(\tWeyl\right)}{a\End_{\hgog_1}\left(\tWeyl\right)}\subset \End_{\hgog_1}\left(\frac{\tWeyl}{a\tWeyl}\right)\simeq \prod_{i=0}^\mu \End_{\hgog_1}(\mV_1^{\grl+\mu-2i}).
$$
Hence, composing the specialisation of the map $\gra$ with this inclusion and the isomorphisms above we get a $Z_1$-equivariant endomorphism of $\prod_{i=0}^\mu Z_1^{\grl+\mu-2i}$. Moreover, $\gra(1)=1$, hence we conclude by Remark \ref{rmk:Opnudisgiunti} (3) that the specialisation of $\gra$ is also an isomorphism.
\end{proof}

%\bibliographystyle{acm}
%\bibliography{biblio}

\end{document}